\title{Sections and Chapters}
\newcommand{\Z}{\mathbb{Z}}
\newcommand{\rk}{\textup{rk }}
\newcommand{\spl}[2]{{\bf #1}_{#2}}
\newcommand{\lcm}{\textup{lcm}}
\renewcommand{\a}{\alpha}
\newtheorem{theorem}{Theorem}[section]
\newtheorem{corollary}[theorem]{Corollary}
\newtheorem{lemma}[theorem]{Lemma}
\newtheorem{proposition}[theorem]{Proposition}
\newtheorem{definition}[theorem]{Definition}
\newtheorem{remark}[theorem]{Remark}
\newtheorem{example}[theorem]{Example}
\newtheorem{algorithm}[theorem]{Algorithm}
\newtheorem{question}[theorem]{Question}
\newtheorem*{theorem*}{Theorem}
\definecolor{mygray}{cmyk}{0.1875,0,0.375,0.8745}
\definecolor{myraspberry}{cmyk}{0,1,0.15,0.3}
\definecolor{mysunshine}{cmyk}{0,0.104,0.542,0.0157}
\definecolor{mypurple}{cmyk}{0.324,0.743,0,0.488}
	\tikzstyle{rectanglevertex}=
	\tikzstyle{vertex}=[rounded corners=3,line width=1pt, draw=black!50,fill=white, font=\fontsize{12}{12}\boldmath\rmfamily\bfseries]
	\tikzstyle{edge}=[line width=1pt,]
	\tikzstyle{edgelabel}=[font=\fontsize{12}{12}\boldmath\rmfamily\bfseries,color=myraspberry]
	\tikzstyle{dashedarrow}=[dashed, line width=4pt, ->]
\tikzstyle{dashededge}=[line width=1.5pt,dotted]
\title{Splines mod $m$}
\author[Bowden and Tymoczko]{Nealy Bowden and Julianna Tymoczko}
\address{Smith College, Northampton, MA}
\thanks{The first author was partially supported by NSF grant DMS--1143716.  The second author was partially supported by NSF grant DMS--1248171}
\email{jtymoczko@smith.edu}
\begin{document}

\begin{abstract}
Given a graph whose edges are labeled by ideals in a ring, a generalized spline is a labeling of each vertex by a ring element so that adjacent vertices differ by an element of the ideal associated to the edge.  We study splines over the ring $\Z/m\Z$. Previous work considered splines over domains, in which very different phenomena occur.  For instance when the ring is the integers, the elements of bases for spline modules are indexed by the vertices of the graph.  However we prove that over $\Z/m\Z$ spline modules can essentially have any rank between $1$ and $n$.  Using the classification of finite $\Z$-modules, we begin the work of classifying splines over $\Z/m\Z$ and produce minimum generating sets for splines on cycles over $\Z/m\Z$. We close with many  open questions.

\end{abstract}

\maketitle

\clearpage

\tableofcontents
\listoffigures

\clearpage
\section{Introduction}

This paper considers splines over the integers mod $m$.  For us the $\Z$-module of splines is parametrized by a graph $G$ each of whose edges is labeled with an element of $\Z/m\Z$.  A spline on an edge-labeled graph $G=(V, E, \alpha)$ is an element $\spl{f}{} \in (\Z/m\Z)^{|V|}$ so that for each edge $uv$ in the graph, the difference $\spl{f}{u}-\spl{f}{v}$ is a multiple of the label on the edge $uv$. In other words   $\spl{f}{u}-\spl{f}{v} \in \left< uv \right>$.

The reader familiar with the classical definition of splines may feel bemused.  Classically splines are defined as the module of polynomials over the faces of a given polytope, with the condition  that the polynomials agree up to a specific order at intersections of the faces.  One important example is the collection of piecewise-polynomials over a polytope.  Splines occur throughout mathematics: applied mathematicians use them to approximate complicated functions or isolated data points by relatively manageable functions; analysts classify splines in low dimensions or with other fixed parameters \cite{AlfSch87, AlfSch90, Sch84a, Sch84b, Alf86}; algebraists study algebraic invariants of spline modules  \cite{Bil88, BilRos91, BilRos92, DiP12, DiP14, Haa91, Ros95, Ros04, Yuz92}; and geometers and topologists use splines to describe the equivariant cohomology ring of well-behaved geometric objects \cite{GKM98, Pay06, BFR09, Sch12}.  Recent work generalizes splines to a more abstract algebraic and combinatorial setting, of which the definition in the previous paragraph is still just a special case (see also Section \ref{section: definitions}) \cite{GPT, HMR, BHKR}.

Most existing work considers splines over specific rings: in the traditional analytic and applied cases, real polynomials rings; in the geometric/topological case and algebraic case, often complex polynomial rings; in a few cases, integers.  We consider splines over integers mod $m$.  Our motivation for studying these rings comes from Braden-MacPherson's construction of intersection homology as a module similar to generalized splines \cite{BraMac01}.   Quotient rings and sums of quotient rings appear frequently in Braden-MacPherson's construction.  We view the work in this paper in part as a laboratory for those more complicated settings.

The key difference between previous work and this paper is that $\Z/m\Z$ is not a domain.  Very different phenomena emerge in splines whose base ring is not a domain, and in particular whose base ring is $\Z/m\Z$.  First these spline modules are {\em finite}.  Thus  spline modules over $\Z/m\Z$ must have {\em minimum} generating sets---namely a generating set with smallest possible size (which is different from a {\em minimal} generating set over a ring that is not a domain).  In general spline modules need not be free \cite{Sch12, DiP12}.   The structure theorem for finite abelian groups shows that finite modules are generally not free, but the minimum generating sets function like bases except that each element $\spl{b}{}$ of the minimum generating set has a scalar $c_b$ with $c_b \spl{b}{}=0$.  For this reason these minimum generating sets of finite abelian groups are sometimes called bases in the literature \cite{Dou51a, Dou51b, Dou51c, Dou51d}; see  Proposition \ref{proposition: basis exists} and the surrounding discussion for more.  

Over $\Z/m\Z$ these minimum generating sets can be smaller than expected.  Over a domain we know that the module of splines contains a free submodule of rank at least the number of vertices \cite{GPT}, and over a PID the module of splines is always free with rank the number of vertices \cite{Ros, HagTym}. Theorem \ref{theorem: max rank is n} shows that there are at most $n$ elements in the minimum generating set for splines mod $m$ on a graph with $n$ vertices.  The rank of the $\Z$-module of splines is defined to be the number of elements of a minimum generating set (not to be confused with the free rank of a $\Z$-module \cite[Definition 1, Page 165]{DumFoo03}).  One of our main results proves the module of splines can have essentially any rank over a ring with zero divisors.  More precisely Theorem \ref{edge and set size theorem} says the following.  
\begin{theorem*}
Fix $n$ and $m$ so that $n\geq 4$ and $m$ has at least $2$ prime factors or so that $n=3$ and $m$ has at least $3$ prime factors.  Then there is a graph with $n$ vertices whose splines over $\Z/m\Z$ have rank $k$ for each $k=1,2,\ldots,n$.
\end{theorem*}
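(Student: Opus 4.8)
The plan is to prove the theorem by explicit construction: for each target rank $k\in\{1,\dots,n\}$ I would exhibit an edge-labeled graph on $n$ vertices whose module of splines over $\Z/m\Z$ has rank exactly $k$. The engine is a Chinese Remainder Theorem decomposition of the spline module. Write $m=p_1^{a_1}\cdots p_r^{a_r}$; the hypotheses guarantee $r\ge 2$, and $r\ge 3$ when $n=3$. I would restrict to labelings in which every edge label is a divisor of the form $\prod_{i\in S}p_i^{a_i}$ for a proper subset $S\subsetneq\{1,\dots,r\}$, so that modulo each $p_i^{a_i}$ the label is either $0$ (when $i\in S$) or a unit (when $i\notin S$). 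Since ideals of a product ring split as products, a tuple in $(\Z/m\Z)^{|V|}\cong\prod_i(\Z/p_i^{a_i}\Z)^{|V|}$ is a spline if and only if each coordinate is a spline for the reduction of the labels mod $p_i^{a_i}$; for labels of the above form that reduced problem forces equality exactly across the edges whose label is divisible by $p_i^{a_i}$ and imposes nothing else. Writing $G_i$ for the spanning subgraph of those edges and $c_i$ for its number of connected components, the $i$-th coordinate module is therefore free over $\Z/p_i^{a_i}\Z$ of rank $c_i$ (choose the value on each component), so the spline module is $\bigoplus_{i=1}^{r}(\Z/p_i^{a_i}\Z)^{c_i}$ and, by the structure theory of finite abelian groups (Proposition~\ref{proposition: basis exists}), has rank $\max_i c_i$. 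This reduces the theorem to a combinatorial problem: choose a simple graph and such a labeling so that $\max_i c_i=k$.

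I would first dispose of the extremes. For $k=n$, take $K_n$ with every edge labeled $p_1^{a_1}$; then $G_1=K_n$ is connected but $G_i$ is edgeless for every $i\ge 2$, so $\max_i c_i=\max(1,n,\dots,n)=n$. For $k=1$, I need every $G_i$ to be connected and spanning simultaneously. When $n\ge 4$, take two edge-disjoint spanning trees $T_1,T_2$ of $K_n$ (these exist since $n\ge 4$), label the edges of $T_1$ by $p_1^{a_1}$ and those of $T_2$ by $m/p_1^{a_1}=\prod_{i\ge 2}p_i^{a_i}$, and the remaining edges of $K_n$ by $p_1^{a_1}$; then $G_1\supseteq T_1$ and $G_i\supseteq T_2$ for $i\ge 2$ are all connected, so $\max_i c_i=1$. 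When $n=3$ the graph $K_3$ has no two edge-disjoint spanning trees, which is exactly why one needs $r\ge 3$ here: label the three edges of $K_3$ by $p_2^{a_2}p_3^{a_3}$, $p_1^{a_1}p_3^{a_3}$, and $p_1^{a_1}p_2^{a_2}$, so that each $G_i$ consists of two of the three edges and is connected.

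For an intermediate value $2\le k\le n-1$ I would arrange $c_1=k$ and $c_i=1$ for all $i\ge 2$. Start from the spanning path $v_1v_2\cdots v_n$, with every edge labeled $m/p_1^{a_1}=\prod_{i\ge 2}p_i^{a_i}$ (a proper divisor of $m$ because $r\ge 2$); then this path lies in $G_i$ for every $i\ge 2$, forcing $c_i=1$, while no path edge lies in $G_1$. Now add a star of $n-k$ further edges, $v_1v_3,v_1v_4,\dots,v_1v_{n-k+2}$ (these are not path edges, and there is at least one and at most $n-2$ of them precisely because $2\le k\le n-1$), each labeled $p_1^{a_1}$. These new edges form exactly $G_1$, a forest with $n-k$ edges on $n$ vertices and hence with $k$ connected components, and they lie in no $G_i$ with $i\ge 2$. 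So $\max_i c_i=\max(k,1,\dots,1)=k$, and the underlying graph (a path together with a star at $v_1$) is simple.

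The step I expect to be the genuine obstacle is the $k=1$ case together with the bookkeeping of the hypotheses: one must make all $r$ subgraphs $G_i$ connected and spanning at once while keeping the underlying graph simple, and the requirement that each label be a \emph{proper} divisor (so that each edge is excluded from at least one $G_i$) is precisely what forces the two-prime case to use $n\ge 4$, so that $K_n$ has two edge-disjoint spanning trees, and what forces the three-prime hypothesis when $n=3$. Beyond that, the remaining work is routine: verifying that ``the value on each connected component of $G_i$'' really gives a free generating set of the $i$-th coordinate module over $\Z/p_i^{a_i}\Z$ with no hidden relations, and checking from the primary decomposition that $\operatorname{rank}\bigl(\bigoplus_i(\Z/p_i^{a_i}\Z)^{c_i}\bigr)=\max_i c_i$.
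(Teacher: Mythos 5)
Your reduction is sound: once every edge label has the form $\prod_{i\in S}p_i^{a_i}$ for a proper subset $S$, Theorem \ref{structure theorem} and Corollary \ref{corollary: invariant factor decomposition} give exactly your decomposition $R_{G,\alpha}\cong\bigoplus_{i}(\Z/p_i^{a_i}\Z)^{c_i}$ with rank $\max_i c_i$, where $c_i$ counts components of the spanning subgraph of edges divisible by $p_i^{a_i}$; and your $k=n$ design, your path-plus-star design for $2\le k\le n-1$, and your two-edge-disjoint-spanning-trees design for $k=1$, $n\ge 4$ all verify correctly for arbitrary $m$ with at least two prime factors. The one step that fails as written is the $n=3$, $k=1$ triangle: the labels $p_2^{a_2}p_3^{a_3}$, $p_1^{a_1}p_3^{a_3}$, $p_1^{a_1}p_2^{a_2}$ involve only the first three primes, so if $m$ has a fourth prime factor then all three labels are units mod $p_4^{a_4}$, the subgraph $G_4$ is edgeless, $c_4=3$, and your own formula yields rank $3$ rather than $1$ -- yet the hypothesis is ``at least three prime factors,'' so this case must be covered. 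The repair stays entirely inside your framework: label the three edges $m/p_1^{a_1}$, $m/p_2^{a_2}$, $m/p_3^{a_3}$ (equivalently, write $m=n_1n_2n_3$ with the $n_j$ pairwise coprime and use $n_2n_3$, $n_1n_3$, $n_1n_2$); then every $p_i^{a_i}$ divides at least two of the three labels, each $G_i$ is connected, and the rank is $1$.

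With that patch, your route is genuinely different from the paper's. The paper obtains ranks $2\le k\le n$ by induction on the number of vertices, attaching a new vertex whose edges are labeled $n_1$ (rank goes up) or $n_1$ and $n_2$ with $n_1n_2=m$ coprime (rank is preserved), and it proves the rank-one case (Theorem \ref{edge and set size theorem}) by growing the graph from the base examples \ref{new find} and \ref{4cycleremark}, using Lemma \ref{lemma: adding one vertex}, the splitting principle, and Corollary \ref{corollary: flow-up with one label} to track minimum generating sets of constant flow-up splines. Your argument instead fixes the graph and labeling in one shot and computes the entire module as $\bigoplus_i(\Z/p_i^{a_i}\Z)^{c_i}$, converting the problem into choosing spanning subgraphs with prescribed component counts. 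That is arguably cleaner, yields the full isomorphism type (not merely the rank), and in the $n\ge 4$ cases handles non-squarefree $m$ more explicitly than the paper's base examples, which are stated only for $m=pq$ and $m=pqr$. What the paper's induction buys is compatibility with its flow-up machinery -- it produces explicit flow-up minimum generating sets along the way -- and the ability to extend any given rank-$k$ example to all larger vertex counts without redesigning the labeling.
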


Theorem \ref{structure theorem} gives another main result: a structure theorem for splines mod $m$.
\begin{theorem*}
Let $G$ be a graph whose edges are labeled with elements in $\Z/m\Z$.  Let $m',m''$ be relatively prime integers with $m'm''=m$ and let $G'$ be the graph obtained by taking each edge of $G$ mod $m'$ (respectively $G''$, $m''$).  Then the rings of splines $R_G, R_{G'},$ and $R_{G''}$ are related by
\[R_G \cong R_{G'} \oplus R_{G''}\]
\end{theorem*}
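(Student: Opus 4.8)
The plan is to read this off from the Chinese Remainder Theorem applied coordinate by coordinate. Write $\psi\colon \Z/m\Z \to \Z/m'\Z \oplus \Z/m''\Z$ for the ring isomorphism $a \mapsto (a \bmod m',\, a \bmod m'')$, and let $\Psi\colon (\Z/m\Z)^{|V|} \to (\Z/m'\Z)^{|V|} \oplus (\Z/m''\Z)^{|V|}$ be the induced isomorphism obtained by applying $\psi$ in each vertex coordinate. Since addition and multiplication of splines are coordinatewise, $\Psi$ is a ring isomorphism of the ambient product rings, and $R_G$, $R_{G'}$, $R_{G''}$ sit inside these as (unital) subrings; the only thing to check for that is closure under multiplication, which is the identity $(fg)_u - (fg)_v = f_u(g_u - g_v) + g_v(f_u - f_v)$. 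So it suffices to show that $\Psi$ carries $R_G$ onto $R_{G'}\oplus R_{G''}$.

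The one substantive step is that $\psi$ matches up principal ideals: for every $a \in \Z/m\Z$,
\[
\psi\bigl(\langle a \rangle\bigr) \;=\; \langle a \bmod m' \rangle \oplus \langle a \bmod m'' \rangle,
\]
because as $b$ ranges over $\Z/m\Z$ its image $(b \bmod m',\, b \bmod m'')$ ranges over all of $\Z/m'\Z \oplus \Z/m''\Z$, so the set $\{(ab \bmod m',\, ab \bmod m'')\}$ is exactly $\{(a'b',\, a''b'') : b'\in\Z/m'\Z,\ b''\in\Z/m''\Z\}$. Here I am using that ``reducing the edge label $uv$ mod $m'$'' means reducing a fixed integer representative, so that the ideal $\langle uv\rangle$ computed in $\Z/m'\Z$ is literally the first factor above.

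Granting this, fix $f \in (\Z/m\Z)^{|V|}$ and write $\Psi(f) = (f', f'')$. For an edge $uv$ we have $\psi(f_u - f_v) = (f'_u - f'_v,\ f''_u - f''_v)$, so by the ideal identity, $f_u - f_v \in \langle uv\rangle$ in $\Z/m\Z$ if and only if $f'_u - f'_v \in \langle uv\rangle$ in $\Z/m'\Z$ and $f''_u - f''_v \in \langle uv\rangle$ in $\Z/m''\Z$. Ranging over all edges, $f$ is a spline on $G$ if and only if $f'$ is a spline on $G'$ and $f''$ is a spline on $G''$; that is, $\Psi(R_G) = R_{G'}\oplus R_{G''}$. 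Restricting the ring isomorphism $\Psi$ then gives $R_G \cong R_{G'} \oplus R_{G''}$.

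I do not anticipate a genuine obstacle: the statement is essentially CRT, and the only care needed is the ideal bookkeeping above together with fixing integer representatives for the edge labels consistently across $G$, $G'$, and $G''$. If anything is delicate it is purely notational — making sure that ``the graph obtained by taking each edge of $G$ mod $m'$'' is interpreted so that the three edge-labelings are genuinely the reductions of one common integer labeling, which is exactly what makes the ideal correspondence hold edge by edge.
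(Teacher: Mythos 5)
Your proposal is correct and takes essentially the same route as the paper: both apply the Chinese Remainder Theorem isomorphism $\Z/m\Z \cong \Z/m'\Z \oplus \Z/m''\Z$ vertexwise and match the edge ideal $\langle uv \rangle$ with $\langle uv \bmod m' \rangle \oplus \langle uv \bmod m'' \rangle$ edge by edge. The only cosmetic difference is that the paper invokes its general functoriality results (ring injections and surjections induce injections and surjections of spline rings, applied to $\rho' \oplus \rho''$), whereas you verify the ideal correspondence and the spline condition directly.
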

In other words we can use the prime factorization of $m$ to reduce computations of splines considerably.  This is similar to the technique of localizing at prime ideals in a polynomial ring used in work on algebraic splines, first by Billera and Rose \cite[Theorem 2.3]{BilRos91} and later by Yuzvinsky \cite[Lemma 2.3]{Yuz92} and DiPasquale \cite[Proposition 3.5 and Corollary 3.6]{DiP14} Section \ref{section: classification} uses Theorem  \ref{structure theorem} and other results in this paper to classify the module of splines completely for $m=p$ and $m=p^2$ and any graph $G$, as well as for arbitrary $m$ and graphs $G=C_n$ that are cycles.   Algorithm \ref{algorithm} and Theorem \ref{theorem: algorithm works} explicitly construct a minimum generating set for splines mod $m$ over cycles, consisting of an analogue of upper-triangular basis elements called {\em flow-up splines}.

Section \ref{questions section} concludes with a number of open questions.

\section{Notation and background}

In this section we give the general definition of splines, as well as the special case of the definition used in this manuscript.  We also describe an analogue of upper-triangular bases for splines, which are called {\em flow-up splines} because of geometric applications in which the elements are defined by certain torus flows \cite{GolTol09, KnuTao03, Tym05, HMR}.  The section ends by using the structure theorem for finite abelian groups to give conditions for when flow-up splines also form a minimum generating set. 

\subsection{Definitions and Notation}\label{section: definitions}

\begin{definition}
Let $G=(V,E)$ be a finite graph.  Let $R$ be a commutative ring with identity.  Let $\alpha: E \rightarrow \{\textup{ideals in }R\}$ be a function that labels the edges of $G$ with ideals in $R$.  The splines on $G$ are elements $\spl{f}{} \in R^{|V|}$ such that for each edge $uv \in E$ we have
\[\spl{f}{u} - \spl{f}{v} \in \left<\alpha({uv})\right>.\]
The collection of splines over the graph $G$ with edge-labeling $\alpha$ is denoted $R_{G,\alpha}$ or just $R_G$ if the edge-labeling is clear.
\end{definition}

The collection of splines $R_{G,\alpha}$ form a ring and an $R$-module with the subring and submodule structure inherited from $R^{|V|}$.  In particular the identity spline $\spl{1}{} \in R_{G,\alpha}$ is defined so that $\spl{1}{v}=1$ for all $v \in V$ and the zero spline $\spl{0}{} \in R_{G,\alpha}$ is defined so that $\spl{0}{v}=0$ for all $v \in V$.  

In this paper the base ring is the quotient ring $R=\Z/m\Z$. Every ideal in $\Z/m\Z$ is principal so we typically describe an edge-label $\langle a \rangle$ by the generator $a \in \Z/m\Z$ as in the introduction. 

{\bf Throughout this manuscript $m$ refers to the modulus and $n$ refers to the number of vertices in $G$.}

The graph we discuss most in this manuscript is the cycle with $n$ vertices, which we label as shown in Figure \ref{ncyclelabels}.
\begin{figure}[h]
\begin{center}
\begin{tikzpicture}
	\pgfmathsetmacro{\r}{2.5}
	\pgfmathsetmacro{\ro}{2.75}
	\pgfmathsetmacro{\edge}{36}

	\node (a) at ({-90 + \edge*0}:\r) {};
	\node (b) at ({-90 + \edge*1}:\r) {};
	\node (c) at ({-90 + \edge*2}:\r) {};
	\node (d) at ({-90 + \edge*3}:\r) {};
	\node (e) at ({-90 + \edge*4}:\r) {};
	\node (f) at ({-90 + \edge*5}:\r) {};

	\draw[edge] (a)--(b);
	\draw[edge] (b)--(c);
	\draw[edge] (c)--(d);
	\draw[dashededge] (d)--(e);
	\draw[edge] (e)--(f);
	\draw[edge] (f)--(a);

	\node[edgelabel] at ({-72 + \edge*0}:\ro) {$\ell_1$};
	\node[edgelabel,right] at ({-72 + \edge*1}:\r) {$\ell_2$};
	\node[edgelabel] at ({-72 + \edge*2}:\ro) {$\ell_3$};
	\node[edgelabel,right] at ({-72 + \edge*3}:\r){$$};
	\node[edgelabel] at ({-72 + \edge*4}:\ro) {$\ell_{n-1}$};
	\node[edgelabel] at (180:\ro /5) {$\ell_n$};

	\node[vertex] at (a) {$\color{white}..$};
	\node[vertex] at (b) {$\color{white}..$};
	\node[vertex] at (c) {$\color{white}..$};
	\node[vertex] at (d) {$\color{white}..$};
	\node[vertex] at (e) {$\color{white}..$};
	\node[vertex] at (f) {$\color{white}..$};
	
	\end{tikzpicture}
\end{center}
\caption{Labeling conventions for general $n$-cycles} \label{ncyclelabels}
\end{figure}
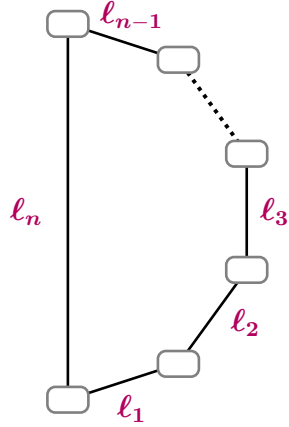

We study the $\Z$-module of splines $R_{G,\alpha}$ in this manuscript.  

\begin{remark} 
\label{unitsandzeros}
We generally assume that the edges of our graphs are not labeled with $0$ or with units.  If the edge $e=v_1v_2$ is labeled with a unit, it does not restrict the splines on the graph since $v_1 \equiv v_2 \mod 1$ is always true.  If an edge $e=v_1v_2$ is labeled zero it tells us that for every spline $\spl{p}{}$ the values $\spl{p}{v_1}=\spl{p}{v_2}$.  

In some of the constructions that follow, we will produce graphs with $0$- or unit-edge-labels.  Given such a graph $G$ we can transform $G$ into a graph $G'$ that does meet our criteria and for which $R_G \cong R_{G'}$.  The transformation merges any two vertices that are joined by an edge labeled $0$ and erases any edge labeled with a unit.  The associated isomorphism of rings of splines is the `forgetful' map that simply omits $\spl{p}{v_2}$ for each edge $e=v_1v_2$ that is labeled $0$, and that is otherwise the identity. 
\end{remark}

A key tool in this paper is the notion of {\em flow-up splines}, which generalizes the concept of a triangular generating set from linear algebra. 

\begin{definition}
Given a graph $G$ with an ordered set of vertices $V = \{v_1, v_2, \ldots, v_n\}$ a flow-up spline for a vertex $v_i$ is a spline $\spl{f^{(i)}}{}$ for which $\spl{f^{(i)}}{v_k}=0$ whenever $k<i$.
\end{definition}

Typically the order is chosen consistently with a direction on the edges of the graph.

It can be very convenient if the entries of flow-up splines have at most one possible nonzero value.  We call these constant flow-up splines, as defined below.

\begin{definition}
\label{constant flow up spline} A {\bf{constant flow up spline}} in $\Z/m\Z$ is a {{flow-up spline}}  $\spl{p}{}$ for which there exists an element $n_i \in \Z/m\Z$ such that $\spl{p_i}{v} \in \{0, n_i\}$ for each $v \in V$.  
\end{definition}

\begin{example}  \label{constantex} We give a set of constant flow-up splines for a graph over $\Z/21\Z$.
\begin{multicols}{2}
\begin{center}
\begin{tikzpicture}
	\pgfmathsetmacro{\r}{2.5}
	\pgfmathsetmacro{\ro}{2.75}
	\pgfmathsetmacro{\edge}{36}

	\node (a) at ({-90 + \edge*0}:\r) {};
	\node (b) at ({-90 + \edge*1}:\r) {};
	\node (c) at ({-90 + \edge*2}:\r) {};
	\node (d) at ({-90 + \edge*3}:\r) {};
	\node (e) at ({-90 + \edge*4}:\r) {};
	\node (f) at ({-90 + \edge*5}:\r) {};

	\draw[edge] (a)--(b);
	\draw[edge] (b)--(c);
	\draw[edge] (c)--(d);
	\draw[edge] (d)--(e);
	\draw[edge] (e)--(f);
	\draw[edge] (f)--(a);

	\node[edgelabel] at ({-72 + \edge*0}:\ro) {$3$};
	\node[edgelabel,right] at ({-72 + \edge*1}:\r) {$3$};
	\node[edgelabel] at ({-72 + \edge*2}:\ro) {$7$};
	\node[edgelabel,right] at ({-72 + \edge*3}:\r){$7$};
	\node[edgelabel] at ({-72 + \edge*4}:\ro) {$3$};
	\node[edgelabel] at (180:\ro /5) {$7$};

	\node[vertex] at (a) {$\color{white}..$};
	\node[vertex] at (b) {$\color{white}..$};
	\node[vertex] at (c) {$\color{white}..$};
	\node[vertex] at (d) {$\color{white}..$};
	\node[vertex] at (e) {$\color{white}..$};
	\node[vertex] at (f) {$\color{white}..$};
	
	\end{tikzpicture}
\end{center}

\columnbreak

\hspace{-0.5in}
 $ \left\{\left(\begin{array}{c}1\\1\\1\\1\\1\\1\end{array}\right), \left(\begin{array}{c}{0}\\3\\3\\3\\3\\ 0\end{array}\right),\color{black} \left(\begin{array}{c}{0}\\3\\3\\3\\{0}\\0\end{array}\right), \left(\begin{array}{c}{7}\\7\\7\\0\\0\\0\end{array}\right), \left(\begin{array}{c}{7}\\7\\0\\0\\0\\0\end{array}\right) \right\}$
\end{multicols}
\end{example}

\subsection{Flow-up splines generate $R_{G,\alpha}$}

The module $R_{G,\alpha}$ is finite because it is a subset of $(\Z/m\Z)^n$.  The next theorem shows that we can pick flow-up splines to generate the module $R_{G,\alpha}$. Subsequent corollaries use the structure theorem for finite abelian groups to describe $R_{G,\alpha}$ explicitly in terms of flow-up generators. In Sections \ref{generating sets} and \ref{minimal generating sets} we will refine this result to construct flow-up splines that form a minimum generating set for $R_{G,\alpha}$.

\begin{theorem} 
\label{flowupsplinestheorem}
The $\Z$-module $R_{G,\alpha}$ is generated by a collection of flow-up splines.  \end{theorem}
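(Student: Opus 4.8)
The plan is to build flow-up splines generating $R_{G,\alpha}$ by a downward induction on the vertex index, peeling off one coordinate at a time. Fix the ordered vertex set $V=\{v_1,\dots,v_n\}$. For $i=1,\dots,n$ let $M_i \subseteq R_{G,\alpha}$ be the submodule of splines $\spl{f}{}$ with $\spl{f}{v_k}=0$ for all $k<i$; thus $M_1=R_{G,\alpha}$ and every element of $M_i$ is automatically a flow-up spline for $v_i$. The claim I will actually prove by descending induction (from $i=n$ down to $i=1$) is that $M_i$ is generated by flow-up splines. For the base case, $M_n$ consists of splines supported only at $v_n$, so $M_n \cong \{a \in \Z/m\Z : a \in \langle \alpha(e)\rangle \text{ for every edge } e \text{ incident to } v_n\}$, which is an ideal of $\Z/m\Z$, hence cyclic; a generator is a flow-up spline for $v_n$.

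For the inductive step, suppose $M_{i+1}$ is generated by flow-up splines. Consider the projection $\pi_i : M_i \to \Z/m\Z$ sending $\spl{f}{}$ to its $v_i$-entry $\spl{f}{v_i}$; this is a $\Z$-module homomorphism whose kernel is exactly $M_{i+1}$ (a spline in $M_i$ with vanishing $v_i$-entry lies in $M_{i+1}$). The image $\pi_i(M_i)$ is a submodule of $\Z/m\Z$, hence cyclic, say generated by $\bar{c}_i = \pi_i(\spl{g^{(i)}}{})$ for some $\spl{g^{(i)}}{} \in M_i$. By construction $\spl{g^{(i)}}{}$ vanishes on $v_1,\dots,v_{i-1}$, so it is a flow-up spline for $v_i$. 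Now given any $\spl{f}{}\in M_i$, we have $\pi_i(\spl{f}{}) = t\bar{c}_i$ for some $t\in\Z$, so $\spl{f}{} - t\,\spl{g^{(i)}}{} \in \ker\pi_i = M_{i+1}$, which by the inductive hypothesis is a $\Z$-combination of flow-up splines (for vertices $v_{i+1},\dots,v_n$). Hence $\spl{f}{}$ is a $\Z$-combination of $\spl{g^{(i)}}{}$ and those flow-up splines, completing the induction. Taking $i=1$ gives the theorem.

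The only genuine point to check carefully is that the surjection $M_i \twoheadrightarrow \pi_i(M_i)$ can be \emph{split} by an element of $M_i$ itself — i.e.\ that one can choose a preimage $\spl{g^{(i)}}{}$ which is a legitimate spline, not merely a lift of the $v_i$-entry. This is immediate here because $\pi_i$ is defined on $M_i$, so any preimage is by definition a spline vanishing on $v_1,\dots,v_{i-1}$; there is no separate existence problem. I expect the main (mild) obstacle is purely bookkeeping: being careful that $M_{i+1}=\ker\pi_i$ rather than something larger, which rests on the observation that the spline condition on $M_i$ already forces $\spl{f}{v_k}=0$ for $k<i$, so the only remaining constraint distinguishing $M_{i+1}$ from $M_i$ is the value at $v_i$. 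Note this argument produces exactly $n$ flow-up splines $\spl{g^{(1)}}{},\dots,\spl{g^{(n)}}{}$ (one per vertex, some possibly zero), which is consistent with Theorem~\ref{theorem: max rank is n}; the later sections refining this to a \emph{minimum} generating set will have to analyze when some of these $\spl{g^{(i)}}{}$ are redundant.
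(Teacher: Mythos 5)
Your proof is correct and is essentially the paper's argument: both work with the filtration $M_i$ of splines vanishing on $v_1,\dots,v_{i-1}$ and use the key fact that the values at the next vertex form a principal ideal ($\Z$-cyclic submodule) of $\Z/m\Z$, realized by a flow-up spline whose multiples are then subtracted off. The only difference is organizational---the paper runs the induction upward from the identity spline while you run it downward from $M_n$---which does not change the substance of the proof.
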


\begin{proof}
We prove the claim by induction.  Our inductive hypothesis is that for some $k$ with $1 \leq k \leq n$ there is a collection of at most $k$ flow-up classes $\{\spl{f^{(i)}}{}: i \leq k\}$ so that each spline $\spl{f}{} \in R_{G,\alpha}$ has integers $c_i \in \Z$ satisfying
\[\spl{f}{v_j}-\sum_{i} c_i \spl{f^{(i)}}{v_j} = 0 \textup{    for all     } v_j \textup{     with     } 1 \leq j \leq k.\]
The base case is $k=1$ for which we take the spline $\spl{f^{(1)}}{}=\spl{1}{}$ to be the identity spline.  If $\spl{f}{} \in R_{G,\alpha}$ then the coefficient $c_1 =\spl{f}{v_1}$ satisfies the condition that $\spl{f}{v_1}- c_1 \spl{f^{(i)}}{v_1}=0$ by construction.

Now assume the claim for $k$.  We show the inductive hypothesis holds for $k+1$ as well.  Consider the $\Z$-submodule $M_{k+1} \subseteq R_{G,\alpha}$ consisting of all splines $\spl{f}{}$ such that $\spl{f}{v_j} = 0$ for all $j$ with $1 \leq j \leq k$.  Now let $\mathcal{I}_{k+1} \subseteq \Z/m\Z$ be the collection
\[ \mathcal{I}_{k+1} = \{ \spl{f}{v_{k+1}} \textup{   for all   } \spl{f}{} \in M_{k+1}\}.\]
As our notation suggests, the set $\mathcal{I}_{k+1}$ is in fact an ideal in $\Z/m\Z$.  Indeed if $\spl{f}{v_{k+1}}$ and $\spl{f'}{v_{k+1}}$ are both in $\mathcal{I}_{k+1}$ then their sum is $\spl{(f+f')}{v_{k+1}}$ which is also in $\mathcal{I}_{k+1}$.  Similarly the integer multiple $c \spl{f}{v_{k+1}}$ is in fact the restriction of the spline $c\spl{f}{}$ to $v_{k+1}$.  Thus the ideal is generated by a single element $\mathcal{I}_{k+1}=\langle a \rangle$.  Let $\spl{f^{(k+1)}}{}$ be any element of $M_{k+1}$ with $\spl{f^{(k+1)}}{v_{k+1}} = a$.  By construction  $\spl{f^{(k+1)}}{}$ is a flow-up spline. (If $a$ is zero we typically take $\spl{f^{(k+1)}}{}$ to be the zero spline.) 

Suppose that $\spl{f}{}$ is an arbitrary spline in $R_{G,\alpha}$.  By the inductive hypothesis there is a linear combination of the flow-up splines $\{\spl{f^{(i)}}{}: i \leq k\}$ so that $\spl{f}{}-\sum_{i} c_i \spl{f^{(i)}}{} \in M_{k+1}$.  By construction of $\spl{f^{(k+1)}}{}$ we know that there is an integer $c_{k+1}$ so that 
\[\spl{f}{v_{k+1}}-\sum_{i} c_i \spl{f^{(i)}}{v_{k+1}} - c_{k+1}\spl{f^{(k+1)}}{v_{k+1}} = 0\]
as desired.  By induction the claim holds.
\end{proof}



\begin{remark}
A collection of flow-up classes that generates the module of splines over an integral domain is in fact a basis, since the flow-up classes are linearly independent by construction.  This may not be true when the base ring is not a domain.  For instance consider the ring $\Z/6\Z$ and let $G$ be the path on two edges whose left edge is labeled $2$ and whose right edge is labeled $3$.  Label the middle vertex $v_1$, the leftmost vertex $v_2$, and the rightmost vertex $v_3$.  Then the flow-up splines from Theorem \ref{flowupsplinestheorem} are $(1,1,1)$, $(0,2,3)$, and $(0,0,3)$ but $3 \cdot (0,2,3) \equiv (0,0,3)$.  So $(0,0,3)$ is generated by $(0,2,3)$.
\end{remark}

\begin{proposition}\label{proposition: basis exists}
The $\Z$-module of splines $R_{G,\alpha}$ satisfies
\[R_{G,\alpha} \cong \bigoplus_i \Z/d_i\Z\]
for positive integers $d_1, \ldots, d_t$ satisfying $d_1 | d_2$, $d_2 | d_3$, $\ldots$, and $d_{t-1} | d_t$.  Moreover the integers $d_i$ are uniquely determined  by $R_{G,\alpha}$.  Finally suppose that for each $i$ the spline $\spl{p_i}{}$ maps to the generator of the summand $\Z/d_i\Z$ under the isomorphism $R_{G,\alpha} \rightarrow \bigoplus_i \Z/d_i\Z$.  Then the set $\{\spl{p_i}{}\}$ is a minimum generating set for $R_{G,\alpha}$.
\end{proposition}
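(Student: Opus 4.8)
The plan is to deduce the statement from the structure theorem for finitely generated modules over the PID $\Z$. First I would observe that $R_{G,\alpha}$ is a finite abelian group, since it is a subgroup of the finite group $(\Z/m\Z)^n$. Being a finite $\Z$-module, it is in particular finitely generated and torsion, so the invariant factor form of the structure theorem (see, e.g., \cite{DumFoo03}) gives an isomorphism $R_{G,\alpha} \cong \bigoplus_{i=1}^t \Z/d_i\Z$ with each $d_i > 1$ and $d_1 \mid d_2 \mid \cdots \mid d_t$, and the integers $d_i$ are uniquely determined by $R_{G,\alpha}$. (One takes $d_i > 1$ so that no summand is trivial; this is forced if $\{\spl{p_i}{}\}$ is to be a minimum generating set.) This establishes the first two sentences of the proposition, so it remains to analyze the splines $\spl{p_i}{}$.

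Fix an isomorphism $\phi \colon R_{G,\alpha} \to \bigoplus_{i=1}^t \Z/d_i\Z$ and splines $\spl{p_i}{}$ with $\phi(\spl{p_i}{})$ equal to the generator of the $i$-th summand $\Z/d_i\Z$. Since those generators generate $\bigoplus_i \Z/d_i\Z$, applying $\phi^{-1}$ shows that $\{\spl{p_1}{}, \ldots, \spl{p_t}{}\}$ generates $R_{G,\alpha}$; so $R_{G,\alpha}$ has a generating set of size $t$. To see no generating set is smaller, let $p$ be a prime dividing $d_1$ (such $p$ exists since $d_1 > 1$). Because $d_1 \mid d_i$ for every $i$, we have $p \mid d_i$ for all $i$, so
\[R_{G,\alpha}\big/ p\,R_{G,\alpha} \;\cong\; \bigoplus_{i=1}^t \big(\Z/d_i\Z\big)\big/ p\big(\Z/d_i\Z\big) \;\cong\; (\Z/p\Z)^t,\]
which has dimension $t$ as a vector space over the field $\Z/p\Z$. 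If $R_{G,\alpha}$ were generated by $s$ elements, then their images would span this quotient, forcing $s \geq t$; hence the minimum size of a generating set for $R_{G,\alpha}$ is exactly $t$, and it is realized by $\{\spl{p_i}{}\}$.

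I do not expect a genuine obstacle here: every step is a direct invocation of the structure theorem for finite abelian groups together with the elementary ``reduce modulo $p$'' argument bounding the number of generators. The only points meriting a word of care are the convention $d_i > 1$ and the fact that the minimality argument does not depend on which lift $\spl{p_i}{}$ of the $i$-th cyclic generator one chooses.
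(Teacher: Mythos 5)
Your proof is correct and takes essentially the same route as the paper: both rest on the invariant factor decomposition of the finite abelian group $R_{G,\alpha}$. The only difference is that you supply the standard reduction-mod-$p$ dimension count to show no generating set has fewer than $t$ elements, whereas the paper simply cites this fact (that the minimum number of generators equals the number of invariant factors) to an exercise in Dummit and Foote.
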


\begin{proof}
This is the invariant factor decomposition of the structure theorem for finite abelian groups (considering the $\Z$-module $R_{G,\alpha}$ as an additive group).  The size of the minimum generating set of a finite abelian group equals the number of factors in the invariant factor decomposition (see, e.g., Dummit and Foote's text \cite[Definition 1 on Page 165, Problem 11 on Page 166]{DumFoo03}).
\end{proof}

\begin{remark}
Example \ref{making generating sets minimal} shows that while the generating set in Example \ref{constantex} is {\em minimal} (namely no subset of those elements generates the same space) it is not {\em minimum} (namely a generating set with the fewest possible elements).  In addition the minimum generating sets we consider are typically not bases in the traditional sense: our modules are generally not free because they are sums of $\Z/m\Z$ for different moduli.  Nonetheless a minimum generating set formed by taking a generator of each factor in the invariant factor decomposition is in some sense canonical and could be called a basis \cite{Dou51a, Dou51b, Dou51c, Dou51d}.  The minimum generating sets we produce in this manuscript are of this form, though we do not refer to them as bases in this manuscript.  The size of a minimum generating set is called the rank of $R_{G,\alpha}$ and denoted $\rk R_{G,\alpha}$. 
\end{remark}

The next lemma combines ideas from the previous two results, showing that a spanning set of constant flow-up splines suffices to characterize the $\Z$-module structure of $R_{G,\alpha}$.

\begin{lemma}
\label{lemma: multiple labels on flow-up classes}
Suppose that $\spl{p_1}{}, \spl{p_2}{}, \ldots, \spl{p_k}{}$ is a set of flow-up generators for $R_{G,\alpha}$and that each  $\spl{p_i}{}$ is a constant flow-up spline, as in Definition \ref{constant flow up spline}.
Then as a $\Z$-module 
\[R_{G,\alpha} \cong \bigoplus_{i=1}^k n_i \Z/m\Z \cong \bigoplus_{i=1}^k \Z/ \frac{m}{\gcd(n_i,m)}\Z.\]
\end{lemma}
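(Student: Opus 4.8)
The plan is to show that the constant flow-up splines $\spl{p_1}{}, \ldots, \spl{p_k}{}$ give an internal direct sum decomposition of $R_{G,\alpha}$ into cyclic $\Z$-submodules, where the $i$-th submodule is $\langle \spl{p_i}{} \rangle \cong n_i\Z/m\Z$. First I would observe that since each $\spl{p_i}{}$ is a flow-up spline for the vertex $v_i$ (in the fixed vertex order) with constant nonzero value $n_i$, we have $\spl{p_i}{v_i} = n_i$ and $\spl{p_i}{v_j} = 0$ for all $j < i$; this is exactly the triangular structure produced by Theorem \ref{flowupsplinestheorem}. The cyclic submodule $\langle \spl{p_i}{} \rangle$ generated by $\spl{p_i}{}$ inside $(\Z/m\Z)^n$ is isomorphic as a $\Z$-module to the cyclic subgroup of $\Z/m\Z$ generated by $n_i$, namely $n_i\Z/m\Z$, and the standard fact that $n_i\Z/m\Z \cong \Z/\frac{m}{\gcd(n_i,m)}\Z$ gives the second isomorphism in the statement.

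The substantive step is to prove that the sum $\sum_{i=1}^k \langle \spl{p_i}{} \rangle$ is direct and equals all of $R_{G,\alpha}$. That it equals $R_{G,\alpha}$ follows because the $\spl{p_i}{}$ generate $R_{G,\alpha}$ by hypothesis. For directness I would argue as in the proof of Theorem \ref{flowupsplinestheorem}: suppose $\sum_{i=1}^k c_i \spl{p_i}{} = \spl{0}{}$ for integers $c_i$. Evaluating at $v_1$ forces $c_1 n_1 \equiv 0 \pmod m$, so $c_1 \spl{p_1}{} = \spl{0}{}$ because $\spl{p_1}{}$ has all entries in $\{0,n_1\}$ and $c_1 n_1 \equiv 0$ kills every entry. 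Removing that term and evaluating at $v_2$ forces $c_2 n_2 \equiv 0 \pmod m$, hence $c_2 \spl{p_2}{} = \spl{0}{}$, and so on inductively up to $v_k$. Thus each summand meets the sum of the others trivially, giving the internal direct sum $R_{G,\alpha} = \bigoplus_{i=1}^k \langle \spl{p_i}{} \rangle$, and combining with the identification of each summand completes the proof.

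The main obstacle — really the only place any care is needed — is the step where I pass from ``$c_i n_i \equiv 0 \pmod m$'' to ``$c_i \spl{p_i}{} = \spl{0}{}$.'' This is precisely where the \emph{constant} hypothesis in Definition \ref{constant flow up spline} is used: a general flow-up spline could have several distinct nonzero entries, and an integer multiple killing the $v_i$-entry need not kill the others, so the direct-sum argument would break (as illustrated by the $\Z/6\Z$ example in the remark after Theorem \ref{flowupsplinestheorem}). Since each $\spl{p_i}{}$ takes only the values $0$ and $n_i$, multiplying by an integer $c_i$ with $c_i n_i \equiv 0$ sends every entry to $0$, so the implication holds. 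I would also note that the $n_i$ may be assumed nonzero without loss of generality (a zero flow-up spline contributes nothing and can be discarded), so that all $k$ summands in the statement are genuinely present.
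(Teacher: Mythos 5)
Your proposal is correct and follows essentially the same argument as the paper: the heart of both is the triangular evaluation at $v_1, v_2, \ldots$ showing that $\sum c_i \spl{p_i}{} = \spl{0}{}$ forces $c_i n_i \equiv 0 \pmod m$ and hence, by the constancy hypothesis, $c_i \spl{p_i}{} = \spl{0}{}$ for every $i$. The only difference is presentational --- you package the conclusion as an internal direct sum $\bigoplus_i \langle \spl{p_i}{} \rangle$, while the paper writes down the explicit isomorphism $\varphi$ and its inverse and checks well-definedness --- and both implicitly use that the leading entry of $\spl{p_i}{}$ at $v_i$ equals $n_i$.
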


\begin{proof}
The second isomorphism follows from the fact that $n_i \Z/m\Z \cong \Z/ \frac{m}{\gcd(n_i,m)}\Z$.  We thus construct an isomorphism $\varphi: R_{G,\alpha} \rightarrow \bigoplus_{i=1}^k n_i \Z/m\Z$ to prove the claim.

Given $\sum c_i \spl{p_i}{} \in R_{G,\alpha}$ define the element 
\[\varphi\left(\sum c_i \spl{p_i}{}\right) = (c_i)_{i=1}^k \in \bigoplus_{i=1}^k n_i \Z/m\Z.\]
This is a $\Z$-module homomorphism so to show that $\varphi$ is well-defined it suffices to show that if $\sum c_i \spl{p_i}{} = \spl{0}{}$ then 
\[\varphi\left(\sum c_i \spl{p_i}{}\right)= (0,0,0,\ldots)\]  
If $\sum c_i \spl{p_i}{} = \spl{0}{}$ then for each $v \in V$ we know $\sum c_i \spl{p_i}{v} \cong 0 \mod m$.  Evaluating at $v_1$ gives $\sum c_i \spl{p_i}{v_1} = c_1 \spl{p_1}{v_1} = c_1 n_1$ because the set $\{\spl{p_i}{}\}$ consists of flow-up splines.  Hence $c_1n_1 \cong 0 \mod m$ and so $c_1 \spl{p_1}{} = \spl{0}{}$.  Assume as the inductive hypothesis that for $i \leq j$ we have $c_in_i \cong 0 \mod m$ and  $c_i \spl{p_i}{} = \spl{0}{}$.  Then evaluate $\sum c_i \spl{p_i}{}$ at $v_j$ to get 
\[\sum c_i \spl{p_i}{v_j} = \sum_{i \leq j} c_i \spl{p_i}{v_j} \cong c_j \spl{p_j}{v_j}\]
by the definition of flow-up splines and then the inductive hypothesis.  Since $\sum c_i \spl{p_i}{} = \spl{0}{}$ we get $c_j \spl{p_j}{v_j} \cong 0 \mod m$.  Since $\spl{p_j}{v_j} = n_j$ we conclude $c_j n_j \cong 0 \mod m$ and hence $c_j \spl{p_j}{}=\spl{0}{}$ as desired.  By induction we conclude that if $\sum c_i \spl{p_i}{} = \spl{0}{}$ then $\varphi\left(\sum c_i \spl{p_i}{}\right)= (0,0,0,\ldots) \in \bigoplus_{i=1}^k n_i \Z/m\Z$.

We now show that the map has a well-defined inverse.  For each $(c_i)_{i=1}^k$ we define
\[\varphi^{-1}((c_i)_{i=1}^k) = \sum c_i \spl{p_i}{}.\]
This is well-defined because if $(c_i)_{i=1}^k \cong (0)_{i=1}^k$ then $c_i n_i \cong 0 \mod m$ for each $i$.  Thus by hypothesis on the splines $\spl{p_i}{}$ we have $c_i \spl{p_i}{} = \spl{0}{}$.  Both $\varphi$ and $\varphi^{-1}$ are $\Z$-module homomorphisms by construction.  We conclude that $\varphi$ is an isomorphism as desired. 
\end{proof}

The next corollary gives one set of conditions under which a generating set of flow-up classes is in fact a minimum generating set.  The proof uses the previous lemma together with the structure theorem for finite abelian groups.

\begin{corollary}\label{corollary: flow-up with one label}
Suppose that $\spl{p_1}{}, \spl{p_2}{}, \ldots, \spl{p_k}{}$ is a set of flow-up generators for $R_{G,\alpha}$ satisfying the following properties:
\begin{itemize}
\item The spline $\spl{p_1}{} = \spl{1}{}$.
\item The splines $\{\spl{p_i}{}: i=2,3,\ldots,k\}$ are constant flow-up splines satisfying $\spl{p_i}{v} \in \{0, a_i\}$ for each $v \in V$ and each $i$.
\item The set $\{a_1=1, a_2, ... , a_k\}$ can be reordered so that $a_{i_1} |  a_{i_2} |  a_{i_3} | ... |  a_{i_k}$.
\end{itemize}
Then $\{  \spl{p_1}{}, \spl{p_2}{}, \ldots, \spl{p_k}{} \}$ forms a minimum generating set for the $\Z$-module $R_{G,\alpha}$.
\end{corollary}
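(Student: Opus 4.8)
The plan is to apply Lemma~\ref{lemma: multiple labels on flow-up classes} to pass from the spanning set of constant flow-up splines to the invariant factor decomposition, and then check that the hypothesis of Proposition~\ref{proposition: basis exists} is met so that this decomposition already has the minimal number of summands. First I would invoke Lemma~\ref{lemma: multiple labels on flow-up classes} directly: since $\spl{p_1}{}, \ldots, \spl{p_k}{}$ is a set of flow-up generators and each $\spl{p_i}{}$ is a constant flow-up spline with nonzero value $a_i$ (taking $a_1 = 1$), the lemma gives
\[R_{G,\alpha} \cong \bigoplus_{i=1}^k \Z/\tfrac{m}{\gcd(a_i,m)}\Z.\]
Set $d_i = \tfrac{m}{\gcd(a_i,m)}$. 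The remaining task is to show that, after reordering, these $d_i$ satisfy the divisibility chain $d_1 \mid d_2 \mid \cdots \mid d_k$, since then by Proposition~\ref{proposition: basis exists} this is exactly the invariant factor decomposition and the number of summands $k$ equals $\rk R_{G,\alpha}$, so the generating set has minimum size.

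The key step is therefore the elementary number-theoretic observation that $a \mid b$ (for $a,b \in \Z/m\Z$, or rather their integer representatives dividing $m$-related quantities) implies $\tfrac{m}{\gcd(a,m)} \mid \tfrac{m}{\gcd(b,m)}$. I would reorder the $a_i$ as in the third hypothesis so that $a_{i_1} \mid a_{i_2} \mid \cdots \mid a_{i_k}$, and then argue: if $a \mid b$ then $\gcd(a,m) \mid \gcd(b,m)$ (every common divisor of $a$ and $m$ is a common divisor of $b$ and $m$), hence $\tfrac{m}{\gcd(b,m)} \mid \tfrac{m}{\gcd(a,m)}$ — wait, the direction needs care: larger $a_i$ means larger $\gcd(a_i,m)$ means smaller $\tfrac{m}{\gcd(a_i,m)}$. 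So the chain $a_{i_1} \mid \cdots \mid a_{i_k}$ yields $d_{i_k} \mid d_{i_{k-1}} \mid \cdots \mid d_{i_1}$; one simply reverses the order to get an increasing divisibility chain of the $d_i$'s, which is the normalization required by the structure theorem. (One should note $d_1 = \tfrac{m}{\gcd(1,m)} = m$ is the largest, consistent with $\spl{p_1}{} = \spl{1}{}$ having full additive order.)

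I expect the main (minor) obstacle to be bookkeeping about which representatives we are dividing and confirming that a generator of each cyclic summand $\Z/d_i\Z$ in the Lemma's isomorphism pulls back to $\spl{p_i}{}$ itself, so that we may genuinely conclude the \emph{given} set $\{\spl{p_1}{}, \ldots, \spl{p_k}{}\}$ — and not merely some abstract set of the same size — is a minimum generating set; this follows from tracing through the explicit isomorphism $\varphi$ constructed in the proof of Lemma~\ref{lemma: multiple labels on flow-up classes}, under which $\spl{p_i}{}$ maps to the standard generator $e_i$ of the $i$-th summand. Since a spanning set whose size equals $\rk R_{G,\alpha}$ is automatically a minimum generating set, this completes the argument. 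No genuinely hard step arises; the content is entirely in assembling Lemma~\ref{lemma: multiple labels on flow-up classes}, Proposition~\ref{proposition: basis exists}, and the divisibility manipulation above.
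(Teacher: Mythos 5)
Your proposal is correct and follows essentially the same route as the paper: apply Lemma~\ref{lemma: multiple labels on flow-up classes} to get $R_{G,\alpha} \cong \bigoplus_i \Z/\tfrac{m}{\gcd(a_i,m)}\Z$, use $a_j \mid a_{j'} \Rightarrow \gcd(a_j,m) \mid \gcd(a_{j'},m) \Rightarrow \tfrac{m}{\gcd(a_{j'},m)} \mid \tfrac{m}{\gcd(a_j,m)}$ to recognize this (after reordering) as the invariant factor decomposition, and conclude from Proposition~\ref{proposition: basis exists} that every generating set has at least $k$ elements, so the given spanning set of size $k$ is minimum. Your closing worry about tracing generators through $\varphi$ is unnecessary, as you yourself note: a spanning set whose size equals the rank is automatically a minimum generating set, which is exactly how the paper concludes.
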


\begin{proof}
Lemma \ref{lemma: multiple labels on flow-up classes} showed that $R_{G,\alpha} \cong  \Z/m\Z \oplus \bigoplus_{j=2}^k (\Z/ \frac{m}{\gcd(a_j,m)}\Z)$.  If $a_j | a_{j'}$ then $gcd(a_j, m) | gcd(a_{j'}, m)$ which in turn implies that $\frac{m}{gcd(a_{j'}, m)} | \frac{m}{gcd(a_j, m)}$.
Up to reordering the factors, this is the invariant factor decomposition of the $\Z$-module $R_{G,\alpha}$.  Thus each minimum generating set of $R_{G,\alpha}$ has exactly $k$ elements.  In particular the splines $\{ \spl{p_i}{}: i=1,2,\ldots,k\}$ form a minimum generating set for $R_{G,\alpha}$.
\end{proof}

\section{Two reductions}

In this section we give two tools to simplify the problem of identifying splines over $G$ mod $m$.   We state the results in terms of the more general ring-theoretic definition and then describe the cases that are most relevant to our applications.  

The first tool describes how splines change after applying homomorphisms to the base ring.  We first describe general results---for instance, that ring surjections  induce spline surjections and that ring injections induce spline injections.  We then give several useful consequences: first that splines over the integers surject onto splines mod $m$; second that splines mod $mm'$ decompose into a direct product of splines mod $m$ and splines mod $m'$ when $m$ and $m'$ are relatively prime. 

The second tool describes how splines over $G$ change after adding a vertex to the graph, and shows that the splines over the larger graph have the structure of a fibration with rank-one fiber.  

\subsection{How changing the base rings affects the ring of splines}

In this section we fix a graph $G$ and describe how homomorphisms of rings induce maps on rings of splines over $G$.  This allows us to conclude several useful things about splines mod $m$: 1) that bases for splines over the integers induce generating sets for splines mod $m$ and 2) that the primary decomposition of the ring $\Z/m\Z$ induces a similar decomposition of the rings of splines.  

We begin by defining a map from ring homomorphisms $\rho: R' \rightarrow R$ to maps on splines $\rho_*: R_{G,\rho^{-1}(\alpha)}' \rightarrow R_{G,\alpha}$.

\begin{definition}
\label{definition: functor}
Consider a ring homomorphism $\rho: R' \rightarrow R$.  Let $G$ be a graph with edge-labeling $\alpha: E \rightarrow \{\textup{ideals in } R\}$.  Denote by $\rho^{-1}(\alpha)$ the edge-labeling that assigns to each edge $e$ the ideal $\rho^{-1}(\alpha(e))$ in the ring $R'$.  

The map $\rho_* : R_{G,\rho^{-1}(\alpha)}' \rightarrow R_{G,\alpha}$ is the restriction of the product map 
\[\rho_*: (R')^{|V|} \rightarrow R^{|V|}\]
 to the rings of splines $\rho_* : R_{G,\rho^{-1}(\alpha)}' \rightarrow R_{G,\alpha}$ namely 
 \[(\rho_*\spl{f}{})_v = \rho(\spl{f}{v})\] 
 for each spline $\spl{f}{} \in R_{G,\rho^{-1}(\alpha)}'$ and each $v \in V$.
\end{definition}

\begin{lemma}
The map $\rho_*: R_{G,\rho^{-1}(\alpha)}' \rightarrow R_{G,\alpha}$ is a well-defined ring homomorphism.
\end{lemma}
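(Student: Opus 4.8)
The plan is to verify directly that $\rho_*$ takes splines to splines and respects the ring operations, so the work splits into three short checks. First I would confirm that $\rho_*$ lands in $R_{G,\alpha}$: given $\spl{f}{} \in R'_{G,\rho^{-1}(\alpha)}$ and an edge $uv \in E$, we know $\spl{f}{u} - \spl{f}{v} \in \rho^{-1}(\alpha(uv))$, which by definition of preimage ideal means $\rho(\spl{f}{u} - \spl{f}{v}) \in \alpha(uv)$. Since $\rho$ is a ring homomorphism, $\rho(\spl{f}{u} - \spl{f}{v}) = \rho(\spl{f}{u}) - \rho(\spl{f}{v}) = (\rho_*\spl{f}{})_u - (\rho_*\spl{f}{})_v$, so the spline condition holds for $\rho_*\spl{f}{}$ on every edge. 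Hence $\rho_*\spl{f}{} \in R_{G,\alpha}$ and the map is well-defined.

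Next I would check that $\rho_*$ is a ring homomorphism. This is immediate from the fact that it is the restriction of the coordinatewise product map $(R')^{|V|} \to R^{|V|}$, which is a ring homomorphism because $\rho$ is: addition and multiplication in the product rings are computed coordinatewise, and on each coordinate $\rho$ preserves sums and products. One should also note $\rho_*$ sends the identity spline $\spl{1}{}$ (all coordinates $1 \in R'$) to the identity spline (all coordinates $\rho(1) = 1 \in R$). Since $R_{G,\rho^{-1}(\alpha)}'$ and $R_{G,\alpha}$ inherit their ring structures as subrings of the product rings, the restriction of a ring homomorphism between the ambient products to these subrings is again a ring homomorphism, provided it maps one into the other — which the first step established.

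There is essentially no hard part here: the statement is a routine functoriality check, and the only thing to be mildly careful about is the bookkeeping with the preimage edge-labeling $\rho^{-1}(\alpha)$, making sure that the condition defining a spline in $R'_{G,\rho^{-1}(\alpha)}$ is stated with respect to the ideals $\rho^{-1}(\alpha(e))$ and not $\alpha(e)$, so that applying $\rho$ recovers membership in $\alpha(e)$. If I wanted to streamline the exposition I would phrase the whole argument as: $\rho_*$ is the restriction to $R'_{G,\rho^{-1}(\alpha)}$ of the ring homomorphism $\rho^{\times |V|}\colon (R')^{|V|} \to R^{|V|}$, and the preimage labeling is defined precisely so that this restriction has image in $R_{G,\alpha}$; well-definedness and the homomorphism property then follow at once.
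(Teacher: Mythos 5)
Your proposal is correct and follows essentially the same route as the paper: verify the edge condition is preserved using the preimage labeling, and note the homomorphism property is inherited from the coordinatewise map $(R')^{|V|} \to R^{|V|}$. If anything, your use of the inclusion $\rho(\rho^{-1}(\alpha(uv))) \subseteq \alpha(uv)$ via the definition of preimage is slightly cleaner than the paper's stated equality, which strictly speaking uses surjectivity that is not needed here.
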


\begin{proof}
The map $\rho_*$ is a well-defined ring homomorphism on the ambient rings $\rho_*: (R')^{|V|} \rightarrow R^{|V|}$ by definition.  For each spline $\spl{f}{} \in R_{G,\rho^{-1}(\alpha)}'$ the image $\rho_*\spl{f}{}$ is a spline in $R_{G,\alpha}$ as follows.  For each pair of vertices $u,v \in V$ we have
\[\rho (\spl{f}{u}) - \rho (\spl{f}{v}) \in \rho \left(\rho^{-1}(\alpha(uv))\right)\]
and $\rho \left(\rho^{-1}(\alpha(uv))\right) = \alpha(uv) \subseteq R$ by definition.  
\end{proof}

\begin{corollary}\label{corollary: injection}
Let $G$ be a graph with edge-labeling $\alpha: E \rightarrow \{\textup{ideals in } R\}$. If $\rho: R' \rightarrow R$ is an injection then the map $\rho_*:R_{G,\rho^{-1}(\alpha)}' \rightarrow R_{G,\alpha}$ is an injection.
\end{corollary}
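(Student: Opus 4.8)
The plan is to exploit the fact that, by Definition \ref{definition: functor}, the map $\rho_*$ on splines is nothing more than the restriction of the coordinatewise map $(R')^{|V|} \to R^{|V|}$, $\,(\spl{f}{v})_{v \in V} \mapsto (\rho(\spl{f}{v}))_{v \in V}$, and a restriction of an injective map is injective. So the whole corollary reduces to checking that this coordinatewise product map is injective, which follows immediately from injectivity of $\rho$.

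First I would note that the domain $R'_{G,\rho^{-1}(\alpha)}$ genuinely makes sense: the preimage of an ideal under a ring homomorphism is again an ideal, so $\rho^{-1}(\alpha)$ is a bona fide edge-labeling of $G$ by ideals of $R'$ (this is already built into Definition \ref{definition: functor}, and the previous lemma established that $\rho_*$ lands in $R_{G,\alpha}$). Second, I would show the product map $\rho_* : (R')^{|V|} \to R^{|V|}$ is injective: if $\rho_*\spl{f}{} = \spl{0}{}$ then $\rho(\spl{f}{v}) = 0$ in $R$ for every vertex $v \in V$, and since $\rho$ is injective this forces $\spl{f}{v} = 0$ in $R'$ for every $v$, i.e. $\spl{f}{} = \spl{0}{}$; equivalently, $\ker \rho_* = 0$ on the ambient product ring. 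Third, since $R'_{G,\rho^{-1}(\alpha)}$ is a subset (indeed a subring) of $(R')^{|V|}$, and the restriction of a map with trivial kernel to a subgroup still has trivial kernel, the map $\rho_* : R'_{G,\rho^{-1}(\alpha)} \to R_{G,\alpha}$ is injective.

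I do not expect any genuine obstacle here; the mathematical content is just ``a product of injections is an injection,'' layered on top of the already-proved well-definedness of $\rho_*$ on spline modules. The only point requiring a little care is bookkeeping: one must keep the pulled-back labeling $\rho^{-1}(\alpha)$ on the source graph rather than $\alpha$, because a spline for the labeling $\alpha$ need not even be an element of $(R')^{|V|}$ when $\alpha$ takes values in ideals of $R$. Once that is straight, the proof is a single line, and it can be phrased dually as $\ker(\rho_*\!\restriction_{R'_{G,\rho^{-1}(\alpha)}}) \subseteq \ker\big(\rho_* : (R')^{|V|} \to R^{|V|}\big) = 0$.
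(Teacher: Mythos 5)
Your proposal is correct and follows the same route as the paper: the product map $\rho_*: (R')^{|V|} \rightarrow R^{|V|}$ is injective since $\rho$ is, and restricting an injective map to the spline submodule preserves injectivity. You simply spell out the kernel computation and the well-definedness bookkeeping that the paper leaves implicit.
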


\begin{proof}
If $\rho$ is an injection then the map $\rho_*: (R')^{|V|} \rightarrow R^{|V|}$ is an injection by definition, so the restriction $\rho_*:R_{G,\rho^{-1}(\alpha)}' \rightarrow R_{G,\alpha}$ is also an injection.
\end{proof}

In addition surjections of rings induce surjections of splines.  

\begin{proposition}
\label{surjectionproposition}
Let $G$ be a graph with edge-labeling $\alpha: E \rightarrow \{\textup{ideals in } R\}$.  If $\rho: R' \rightarrow R$ is a surjection then $\rho_* : R_{G,\rho^{-1}(\alpha)}' \rightarrow R_{G,\alpha}$ is a surjection.  
\end{proposition}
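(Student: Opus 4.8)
The plan is to lift a spline over $R$ to $R'$ coordinatewise and then observe that the lift is automatically a spline. Since $\rho: R' \rightarrow R$ is a surjection, the induced product map $\rho_*: (R')^{|V|} \rightarrow R^{|V|}$ is a surjection as well, because surjectivity can be checked one coordinate at a time. So given any spline $\spl{f}{} \in R_{G,\alpha} \subseteq R^{|V|}$, I would first choose some tuple $\spl{g}{} \in (R')^{|V|}$ with $\rho_*\spl{g}{} = \spl{f}{}$, that is, with $\rho(\spl{g}{v}) = \spl{f}{v}$ for every vertex $v \in V$.

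Next I would verify that $\spl{g}{}$ in fact lies in $R_{G,\rho^{-1}(\alpha)}'$, i.e. that $\spl{g}{u} - \spl{g}{v} \in \rho^{-1}(\alpha(uv))$ for each edge $uv \in E$. By the definition of preimage, this membership is equivalent to $\rho(\spl{g}{u} - \spl{g}{v}) \in \alpha(uv)$. Since $\rho$ is a ring homomorphism, $\rho(\spl{g}{u} - \spl{g}{v}) = \rho(\spl{g}{u}) - \rho(\spl{g}{v}) = \spl{f}{u} - \spl{f}{v}$, and this element lies in $\alpha(uv)$ precisely because $\spl{f}{}$ is a spline over $(G,\alpha)$. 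Hence $\spl{g}{} \in R_{G,\rho^{-1}(\alpha)}'$ and $\rho_*\spl{g}{} = \spl{f}{}$, so $\rho_*$ is surjective. This mirrors the argument for injections in Corollary \ref{corollary: injection}, now using surjectivity of the ambient map instead.

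The one point that deserves a moment's thought — and the thing one might at first expect to be the obstacle — is whether an arbitrary coordinatewise lift of a spline stays a spline, since a priori incompatible choices of lifts at two adjacent vertices could violate the divisibility condition. The resolution is that the spline condition over $R$ pulls back exactly along $\rho$: the preimage edge-labeling $\rho^{-1}(\alpha)$ is defined so that ``$\spl{g}{u} - \spl{g}{v} \in \rho^{-1}(\alpha(uv))$'' says no more than ``$\rho(\spl{g}{u}) - \rho(\spl{g}{v}) \in \alpha(uv)$'', which is automatic once the lift agrees with $\spl{f}{}$ at every vertex. So no correction between vertices is ever needed, and the proof requires nothing beyond surjectivity of $\rho$ on the ambient product rings.
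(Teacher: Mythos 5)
Your proof is correct and is essentially identical to the paper's: both lift the spline coordinatewise using surjectivity of $\rho$ and then verify the edge conditions by noting that $\rho(\spl{g}{u}-\spl{g}{v}) = \spl{f}{u}-\spl{f}{v} \in \alpha(uv)$, so the difference lies in $\rho^{-1}(\alpha(uv))$ by definition of the pulled-back labeling. No further comment is needed.
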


\begin{proof}
Let $\spl{f}{} \in R_{G,\alpha}$.  For each $v \in V$ choose an element $n_v \in \rho^{-1}(\spl{f}{v})$.  Define the element $\spl{f'}{} \in (R')^{|V|}$ by $\spl{f'}{v} = n_v$ for each vertex $v \in V$.  We show that $\spl{f'}{}$ is actually a spline in $R_{G,\rho^{-1}(\alpha)}'$.  Suppose that $u,v$ are adjacent vertices.  Then $\spl{f'}{u}-\spl{f'}{v} = n_u-n_v$ by construction.  Consider the image $\rho(n_u-n_v)$ in $R$. By our choice of $n_u$ and $n_v$ we know $\rho(n_u)-\rho(n_v) = \spl{f}{u}-\spl{f}{v}$ and so $\rho(n_u-n_v) \in \alpha(uv)$.  Thus $n_u-n_v \in \rho^{-1}(\alpha(uv))$ and hence $\spl{f'}{} \in R_{G,\rho^{-1}(\alpha)}'$. This proves the claim.
\end{proof}

We give two applications in the context of splines mod $m$.  The first shows that we can infer information about splines mod $m$ from splines over the integers.  It follows immediately from Proposition \ref{surjectionproposition}.

\begin{corollary}\label{corollary: integers subject onto mod $m$}
Let $G$ be a graph with edge-labeling $\alpha$ in $\Z/m\Z$.  Let $R' = \Z$ denote the ring of integers and $R = \Z/m\Z$ denote the ring of integers mod $m$.  Then the natural surjection $\rho: \Z \rightarrow \Z/m\Z$ induces a surjection  $\rho_*: R_{G,\rho^{-1}(\alpha)}' \rightarrow R_{G,\alpha}$. In particular if $\{\spl{b_1}{}, \spl{b_2}{}, \ldots, \spl{b_n}{}\}$ is a basis for the $\Z$-module of splines $R_{G, \rho^{-1}(\alpha)}'$ over the integers then $\{\rho_* \spl{b_1}{}, \rho_* \spl{b_2}{}, \ldots, \rho_* \spl{b_n}{}\}$ spans the $\Z$-module of splines $R_{G,\alpha}'$ over $\Z/m\Z$.
\end{corollary}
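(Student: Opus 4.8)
The plan is to deduce the statement directly from Proposition \ref{surjectionproposition}, since the natural quotient map $\rho:\Z\to\Z/m\Z$ is a ring surjection. First I would check the hypotheses literally: with $R'=\Z$ and $R=\Z/m\Z$, Proposition \ref{surjectionproposition} immediately gives that $\rho_*:R'_{G,\rho^{-1}(\alpha)}\to R_{G,\alpha}$ is surjective. The only bookkeeping worth spelling out is the identification of the edge-labeling $\rho^{-1}(\alpha)$ on the integer side: an edge carrying the ideal $\langle a\rangle\subseteq\Z/m\Z$ pulls back to $\rho^{-1}(\langle a\rangle)=\langle\gcd(\tilde a,m)\rangle\subseteq\Z$ for any integer lift $\tilde a$ of $a$, which is exactly the natural way to lift an edge-labeled graph from $\Z/m\Z$ to $\Z$; nothing more is required for the first assertion.

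For the ``in particular'' clause I would invoke the elementary fact that a surjective module homomorphism sends any spanning set onto a spanning set. Concretely, a $\Z$-basis $\{\spl{b_1}{},\ldots,\spl{b_n}{}\}$ of $R'_{G,\rho^{-1}(\alpha)}$ is in particular a $\Z$-spanning set; given $\spl{f}{}\in R_{G,\alpha}$, surjectivity of $\rho_*$ yields $\spl{g}{}\in R'_{G,\rho^{-1}(\alpha)}$ with $\rho_*\spl{g}{}=\spl{f}{}$, and writing $\spl{g}{}=\sum_i c_i\spl{b_i}{}$ with $c_i\in\Z$ and applying the $\Z$-linear map $\rho_*$ gives $\spl{f}{}=\sum_i c_i\,\rho_*\spl{b_i}{}$. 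Thus $\{\rho_*\spl{b_1}{},\ldots,\rho_*\spl{b_n}{}\}$ spans $R_{G,\alpha}$ over $\Z$, equivalently over $\Z/m\Z$ since the $\Z$-action on $R_{G,\alpha}$ factors through $\Z/m\Z$.

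I expect no real obstacle: the substance lives in Proposition \ref{surjectionproposition}, which is already proved, and the rest is the standard image-of-a-spanning-set observation. The one point to flag --- rather than to overcome --- is that the images $\rho_*\spl{b_i}{}$ need not be independent over $\Z/m\Z$, so ``basis'' genuinely weakens to ``spanning set'' here; this is precisely the non-domain phenomenon the paper studies, and it is why the corollary is stated the way it is.
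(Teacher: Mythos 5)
Your proposal is correct and matches the paper's treatment: the paper derives this corollary immediately from Proposition \ref{surjectionproposition} applied to the quotient map $\rho:\Z\to\Z/m\Z$, with the spanning claim being exactly the standard observation that a surjective $\Z$-module homomorphism carries a spanning set (in particular a basis) onto a spanning set. Your extra remarks on identifying $\rho^{-1}(\langle a\rangle)=\langle\gcd(\tilde a,m)\rangle$ and on why ``basis'' weakens to ``spanning set'' are accurate but not needed beyond what the paper leaves implicit.
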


For instance we  use Corollary \ref{corollary: integers subject onto mod $m$} to construct a three-cycle whose splines have rank one over $\Z/m\Z$ where $m$ has at least three prime factors.

 \begin{example}
  \label{new find} If $(C_3, L)$ is three cycle mod $pqr$ for some prime numbers $p,q,r$ and if  $L= \{pq,qr,rp\}$ then the trivial spline $\spl{1}{}$ generates all splines on $(C_3, L)$.  (The labeling in the diagram is fully general.)    By \cite{HMR} or \cite{BHKR} an integer basis for this edge-labeling is $\{[1,1,1]^T, [0,pqr,pqr]^T, [0,0,pqr]^T\}$.  Thus all splines mod $pqr$ are trivial.

\begin{multicols}{2}

\begin{center}

\begin{tikzpicture}

	\begin{scope}
		\pgfmathsetmacro{\r}{1}

		\draw[edge][color=black] (-90:\r)--(0:\r);
		\draw[edge][color=black] (0:\r)--(90:\r);
		\draw[edge][color=black] (90:\r)--(-90:\r);		

		\node[edgelabel] at (-45:\r){\color{black}{$rp$}};
		\node[edgelabel] at (45:\r) {\color{black}{$qr$}};
		\node[edgelabel] at (180:\r /4) {\color{black}{$pq$}};
		
		\node[vertex] at (-90:\r) {$\scriptstyle{}$};
		\node[vertex]at (0:\r) {$\scriptstyle{}$};
		\node[vertex]at (90:\r) {$\scriptstyle{}$};

		\node[edgelabel] at (-1,0) {$C_3:$};
	\end{scope}

\end{tikzpicture}
\end{center}

\columnbreak

   \begin{center}  $\mathbb{B}(R_G) =   \left\{\left(\begin{array}{c}1\\1\\1\end{array}\right)\right\}$ 
   \end{center}

\end{multicols}
\end{example}

The second example of this reduction exploits the isomorphism $\Z/m\Z \cong \Z/m'\Z \oplus \Z/m''\Z$ when $m'm''=m$ and $\gcd(m',m'')=1$ to obtain a structure theorem for splines mod $m$.  This is similar to the technique of  localizing at prime ideals for polynomial rings, which has been used in work on algebraic splines \cite{BilRos91, DiP14, Yuz92}.

\begin{theorem}\label{structure theorem}
Let $R = \Z/m\Z$ with $m = m'm''$ and $gcd(m',m'')=1$.  Denote the ring $R'=\Z/m'\Z$ and the standard quotient map $\rho': \Z/m\Z \rightarrow \Z/m'\Z$, respectively $R''=\Z/m''\Z$ and $\rho'': \Z/m\Z \rightarrow \Z/m''\Z$. Let $G$ be a graph with edge-labeling $\alpha$ over the integers, and let $\alpha'$ (respectively $\alpha''$) denote the edge-labeling function that sends each edge $uv$ to the ideal $\alpha'(uv)=\rho'(\alpha(uv))$.

Then 
\[R_{G,\alpha} \cong R'_{G,\alpha'} \oplus R''_{G,\alpha''}.\]
\end{theorem}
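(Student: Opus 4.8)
The plan is to build the isomorphism explicitly from the Chinese Remainder Theorem on $\Z/m\Z$ and check that it restricts correctly to splines. By CRT, since $\gcd(m',m'')=1$ and $m=m'm''$, the map $\rho = (\rho',\rho''): \Z/m\Z \to \Z/m'\Z \oplus \Z/m''\Z$ is a ring isomorphism. Taking $|V|$-fold products gives a ring isomorphism $(\Z/m\Z)^{|V|} \to (\Z/m'\Z)^{|V|} \oplus (\Z/m''\Z)^{|V|}$, and I would simply denote its restriction to $R_{G,\alpha}$ by $\Phi := (\rho'_*, \rho''_*)$, using the functoriality set up in Definition \ref{definition: functor}. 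Note a small bookkeeping point in the hypotheses: $\alpha$ is stated to be a labeling over the integers, and $\alpha'$ (resp. $\alpha''$) is $\rho'$ (resp. $\rho''$) applied to $\alpha$ reduced mod $m$; I would first reduce $\alpha$ mod $m$ without loss of generality, since Corollary \ref{corollary: integers subject onto mod $m$} and the discussion around it already identify splines mod $m$ for an integer labeling with splines for its reduction mod $m$.

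The key steps, in order, are: (1) observe that $\Phi$ is a well-defined ring homomorphism from $R_{G,\alpha}$ into $R'_{G,\alpha'} \oplus R''_{G,\alpha''}$ — this is immediate because each coordinate $\rho'_*$, $\rho''_*$ lands in the respective spline ring by the Lemma preceding Corollary \ref{corollary: injection}, and the product of two ring homomorphisms into a direct sum is a ring homomorphism; (2) injectivity of $\Phi$: if $\Phi(\spl{f}{}) = \spl{0}{}$ then $\rho(\spl{f}{v}) = 0$ for every $v$, and since $\rho$ is injective on $\Z/m\Z$ we get $\spl{f}{v}=0$ for all $v$, i.e. $\spl{f}{}=\spl{0}{}$ (alternatively this follows from Corollary \ref{corollary: injection} applied to the injection $\rho$); (3) surjectivity of $\Phi$: given $(\spl{g}{}, \spl{h}{}) \in R'_{G,\alpha'} \oplus R''_{G,\alpha''}$, for each vertex $v$ let $\spl{f}{v} \in \Z/m\Z$ be the unique element with $\rho'(\spl{f}{v}) = \spl{g}{v}$ and $\rho''(\spl{f}{v}) = \spl{h}{v}$, which exists by CRT. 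I then need to check $\spl{f}{}$ is a spline over $\alpha$: for an edge $uv$ with label $\langle a \rangle$, I must show $\spl{f}{u} - \spl{f}{v} \in \langle a \rangle$ in $\Z/m\Z$. We know $\rho'(\spl{f}{u} - \spl{f}{v}) = \spl{g}{u} - \spl{g}{v} \in \langle \rho'(a) \rangle$ and similarly for $\rho''$; the divisibility statement "$\gcd(a,m') \mid$ something mod $m'$ and $\gcd(a,m'') \mid$ something mod $m''$" recombines via CRT to "$\gcd(a,m) \mid$ the element mod $m$", which is exactly membership in $\langle a\rangle$ in $\Z/m\Z$. This recombination is the one genuinely arithmetic step, and it is where I expect the only real (if mild) friction: one must phrase ideal membership in $\Z/m\Z$ in terms of gcd-divisibility, lift to $\Z$, and use that $\gcd(a,m) = \gcd(a,m')\gcd(a,m'')$ since $\gcd(m',m'')=1$. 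Once that is in place, $\spl{f}{}$ is a spline and $\Phi(\spl{f}{}) = (\spl{g}{},\spl{h}{})$, so $\Phi$ is onto.

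Assembling (1)–(3), $\Phi$ is a ring isomorphism $R_{G,\alpha} \xrightarrow{\sim} R'_{G,\alpha'} \oplus R''_{G,\alpha''}$, which is the claim. The main obstacle is step (3), and within it the translation of the spline condition through CRT at the level of ideals/gcds; everything else is formal functoriality of the $\rho_*$ construction already developed in this section. A cleaner alternative that avoids the gcd arithmetic entirely is to note that CRT gives $(\Z/m\Z)^{|V|} \cong (\Z/m'\Z)^{|V|} \oplus (\Z/m''\Z)^{|V|}$ as rings, that $R_{G,\alpha}$ is cut out inside the left side by the conditions $\spl{f}{u}-\spl{f}{v} \in \alpha(uv)$, and that under the CRT isomorphism the ideal $\alpha(uv) \subset \Z/m\Z$ corresponds precisely to $\alpha'(uv) \oplus \alpha''(uv)$ (since every ideal of a product ring is a product of ideals, and $\rho'(\alpha(uv))$, $\rho''(\alpha(uv))$ are by definition $\alpha'(uv)$, $\alpha''(uv)$); hence the subring cut out on the right is exactly $R'_{G,\alpha'} \oplus R''_{G,\alpha''}$. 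I would probably present this second formulation as the main argument and relegate the gcd computation to a remark.
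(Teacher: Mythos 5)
Your proposal is correct and, especially in the second formulation you sketch at the end, is essentially the paper's own proof: the paper applies the functoriality machinery to the single ring isomorphism $\rho'\oplus\rho'':\Z/m\Z\to\Z/m'\Z\oplus\Z/m''\Z$ with the product edge-labeling $\alpha'\oplus\alpha''$, using the Chinese Remainder Theorem to see that $(\rho'\oplus\rho'')^{-1}\bigl(\alpha'(uv)\oplus\alpha''(uv)\bigr)=\alpha(uv)$, and then cites Corollary \ref{corollary: injection} for injectivity and Proposition \ref{surjectionproposition} for surjectivity. Your explicit vertex-wise CRT lift with the gcd recombination in step (3) is sound but simply re-proves Proposition \ref{surjectionproposition} in this special case, so the paper's packaging lets you skip that arithmetic.
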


\begin{proof}
 By definition of the direct sum we have $R_{G,\alpha' \oplus \alpha''} \cong  R_{G,\alpha'} \oplus R_{G,\alpha''}$. By construction the edge-labelings $\alpha'$ and $\alpha''$ satisfy $(\rho')^{-1}(\alpha')=\alpha$ and $(\rho'')^{-1}(\alpha'') = \alpha$. Thus the Chinese Remainder Theorem guarantees that $(\rho' \oplus \rho'')^{-1}(\alpha'(uv) \oplus \alpha''(uv)) = \alpha(uv)$ for each edge $uv$. We conclude that the isomorphism $\rho' \oplus \rho'': R \rightarrow R' \oplus R''$ induces a map $(\rho' \oplus \rho'')_* : R_{G,\alpha} \rightarrow R'_{G,\alpha'} \oplus R''_{G,\alpha''}$.  By Corollary \ref{corollary: injection} we know $(\rho' \oplus \rho'')_*$ is an injection and by Proposition \ref{surjectionproposition} we know $(\rho' \oplus \rho'')_*$ is a surjection.  It follows that $R_{G,\alpha} \cong R'_{G,\alpha'} \oplus R''_{G,\alpha''}$ as desired.
\end{proof}

The next corollary describes how to use Theorem \ref{structure theorem} to identify minimum generating sets of and the rank of the $\Z$-module of splines mod $m$.

\begin{corollary}\label{corollary: invariant factor decomposition}
Suppose that $m=p_1^{e_1} p_2^{e_2} \cdots p_k^{e_k}$ is the primary decomposition of $m$.  For each $i=1,2,\ldots,k$ denote the ring $R_i=\Z/p_i^{e_i}\Z$ and the standard quotient map $\rho_i: \Z/m\Z \rightarrow \Z/p_i^{e_i}\Z$.  Let $G$ be a graph with edge-labeling $\alpha$ over the integers, and for each $i$ let $\alpha_i$  denote the edge-labeling function that sends each edge $uv$ to the ideal $\alpha_i(uv)=\rho_i(\alpha(uv))$.

Then 
\[R_{G,\alpha} \cong \bigoplus_{i=1}^k  (R_i)_{G,\alpha_i} \]
and the rank of $R_{G,\alpha}$ is the maximum $\max \left\{\textup{rk} (R_i)_{G,\alpha_i} \textup{  for  } i=1,2,\ldots,k\right\}$.
\end{corollary}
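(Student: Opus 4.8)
The plan is to derive this corollary directly from Theorem \ref{structure theorem} by induction on the number $k$ of distinct prime factors of $m$, and then to deduce the rank statement from the invariant factor decomposition (Proposition \ref{proposition: basis exists}). For the first assertion: when $k=1$ there is nothing to prove. For the inductive step, write $m = m' m''$ where $m' = p_1^{e_1}$ and $m'' = p_2^{e_2} \cdots p_k^{e_k}$, so that $\gcd(m',m'')=1$. Theorem \ref{structure theorem} (applied with these $m',m''$, noting $\rho' = \rho_1$) gives $R_{G,\alpha} \cong (R_1)_{G,\alpha_1} \oplus R''_{G,\alpha''}$ where $\alpha''$ is the reduction of $\alpha$ mod $m''$. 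Since $m''$ has $k-1$ distinct prime factors and the reduction of $\alpha''$ mod $p_i^{e_i}$ coincides with $\alpha_i$ for $i = 2,\ldots,k$ (both are the reduction of the integer labeling $\alpha$ mod $p_i^{e_i}$), the inductive hypothesis gives $R''_{G,\alpha''} \cong \bigoplus_{i=2}^k (R_i)_{G,\alpha_i}$. Combining the two displays yields the claimed decomposition.

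For the rank statement, recall from Proposition \ref{proposition: basis exists} that $\rk R_{G,\alpha}$ equals the number of invariant factors of $R_{G,\alpha}$ as a finite abelian group. Each summand $(R_i)_{G,\alpha_i}$ is a module over $\Z/p_i^{e_i}\Z$, hence a finite abelian $p_i$-group, and its invariant factor decomposition has exactly $\rk (R_i)_{G,\alpha_i}$ factors, all of which are powers of $p_i$. When we take the direct sum over distinct primes $p_1, \ldots, p_k$, the primary decomposition of $R_{G,\alpha}$ is simply the concatenation of the primary decompositions of the summands. Reassembling these primary factors into invariant factors, the number of invariant factors is the maximum, over $i$, of the number of $p_i$-primary factors — because invariant factors are built by greedily multiplying together the largest primary factor from each prime, and the process terminates when the prime with the most primary factors is exhausted. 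Hence $\rk R_{G,\alpha} = \max_i \rk (R_i)_{G,\alpha_i}$.

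I expect the only genuinely delicate point to be the bookkeeping in the last paragraph: verifying that, for a finite abelian group presented as a direct sum of $p_i$-groups over distinct primes, the number of invariant factors equals the maximum of the numbers of primary factors across the $p_i$. This is a standard consequence of the structure theorem, but it should be stated carefully, perhaps with a brief reminder that one forms invariant factors by arranging the primary cyclic factors for each prime in decreasing order and taking products across primes slot-by-slot. Everything else — the induction, the compatibility of the successive mod-$p_i^{e_i}$ reductions, and the appeal to Theorem \ref{structure theorem} — is routine, and the consistency of the edge-labelings $\alpha_i$ is immediate since they are all defined as reductions of the single integer labeling $\alpha$.
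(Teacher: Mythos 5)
Your proof is correct, and the first half (the decomposition) is exactly the paper's argument: the paper also derives it from Theorem \ref{structure theorem} "and a small induction," which you have simply written out by splitting off $p_1^{e_1}$ and checking that the successive reductions of the integer labeling are consistent. Where you diverge is the rank statement. The paper proves it constructively: it takes a minimum generating set $\{\spl{b_{p_i}^j}{}\}_{j\le d_i}$ of each summand $(R_i)_{G,\alpha_i}$, pads with zero splines up to $d=\max_i d_i$, and builds a generating set of size $d$ for $R_{G,\alpha}$ as the preimages of the tuples $\bigl(\spl{b_{p_1}^j}{},\ldots,\spl{b_{p_k}^j}{}\bigr)$; minimality follows because the projection onto the factor realizing the maximum already needs $d$ generators. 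You instead argue abstractly through Proposition \ref{proposition: basis exists}: rank equals the number of invariant factors, the primary decomposition of the direct sum is the concatenation of the summands' primary decompositions, and the number of invariant factors of a finite abelian group equals the maximum over primes of the number of $p$-primary cyclic factors (equivalently $\max_p \dim_{\Z/p\Z} A/pA$). Both arguments rest on the same structure theory and your "delicate point" is indeed the standard invariant-factor bookkeeping that the paper itself cites from Dummit--Foote; your version is shorter and cleaner as a counting argument, while the paper's version has the advantage of exhibiting an explicit minimum generating set, which is reused later (it is precisely the mechanism behind Algorithm \ref{algorithm} and Theorem \ref{theorem: algorithm works}).
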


\begin{proof}
The first claim follows from Theorem \ref{structure theorem} and a small induction.  For each $i$ denote a minimum generating set for $(R_i)_{G,\alpha_i}$ by $\{\spl{b_{p_i}^j}{}: j=1,\ldots,d_i\}$.  Let $d$ be the maximum of the $d_i$ and take $\spl{b_{p_i}^j}{}$ to be $\spl{0}{}$ for each $j$ with $d_i < j \leq d$. We construct a minimum generating set for $R_{G,\alpha}$ by finding the preimage in $R_{G,\alpha}$ of each tuple $\left(\spl{b_{p_1}^j}{}, \spl{b_{p_2}^j}{}, \ldots, \spl{b_{p_k}^j}{}\right)$ for $j=1,\ldots, d$. The set generates $R_{G,\alpha}$ because its projection generates the isomorphic ring $\bigoplus_{i=1}^k  (R_i)_{G,\alpha_i}$.  It is a minimum generating set because $d$ elements are needed to generate the $\spl{b_{p_i}^j}{}$ in one of the factors in the isomorphic ring $\bigoplus_{i=1}^k  (R_i)_{G,\alpha_i}$.   In particular the rank of $R_{G,\alpha}$ is $d$ as desired.
\end{proof}

\begin{example} We can use the elementary divisors of $\Z/m\Z$ to determine the invariant factor decomposition of the ring of splines $R_{G,\alpha}$ mod $m$ as a $\Z$-module.  
 
 Let $G$ be the following edge-labeled triangle and consider the splines $R_{G,\alpha}$ over $\Z/36\Z$.
The elementary divisors of $\Z/36\Z$ are $\Z/4\Z$ and $\Z/9\Z$.  We project to these elementary divisors in the graphs $H_1$ and $H_2$. The graph $H_1$ is identical to $G$ except that every edge label from $G$ is now considered mod $4$.  Similarly in $H_2$ we take the edges from $G$ mod $9$.

\begin{multicols}{3}
\begin{center}

\begin{tikzpicture}

	\begin{scope}
		\pgfmathsetmacro{\r}{1}

		\draw[edge][color=black] (-90:\r)--(0:\r);
		\draw[edge][color=black] (0:\r)--(90:\r);
		\draw[edge][color=black] (90:\r)--(-90:\r);		

		\node[edgelabel] at (-45:\r){\color{black}{$12$}};
		\node[edgelabel] at (45:\r) {\color{black}{$30$}};
		\node[edgelabel] at (180:\r /4) {\color{black}{$18$}};
		
		\node[vertex] at (-90:\r) {$\scriptstyle{\color{white}..}$};
		\node[vertex]at (0:\r) {$\scriptstyle{\color{white} ..}$};
		\node[vertex]at (90:\r) {$\scriptstyle{\color{white} ..}$};

		\node[edgelabel] at (2,1.5) {$ \Z/36\Z$};
	\node[edgelabel] at (-1.7,0) {$G:$};
	\end{scope}

\end{tikzpicture}

\end{center}

\columnbreak

\begin{center}

\begin{tikzpicture}

	\begin{scope}
		\pgfmathsetmacro{\r}{1}

		\draw[edge][color=black] (-90:\r)--(0:\r);
		\draw[edge][color=black] (0:\r)--(90:\r);
		\draw[edge][color=black] (90:\r)--(-90:\r);		

		\node[edgelabel] at (-45:\r){\color{black}{$0$}};
		\node[edgelabel] at (45:\r) {\color{black}{$2$}};
		\node[edgelabel] at (180:\r /4) {\color{black}{$2$}};
		
		\node[vertex] at (-90:\r) {$\scriptstyle{\color{red}X}$};
		\node[vertex]at (0:\r) {$\scriptstyle{\color{red}X}$};
		\node[vertex]at (90:\r) {$\scriptstyle{\color{black} ?}$};

		\node[edgelabel] at (2,1.5) {$ \Z/4\Z$};
		\node[edgelabel] at (-1.7,0) {$H_1:$};
	\end{scope}

\end{tikzpicture}

\end{center}

\columnbreak

\begin{center}

\begin{tikzpicture}

	\begin{scope}
		\pgfmathsetmacro{\r}{1}

		\draw[edge][color=black] (-90:\r)--(0:\r);
		\draw[edge][color=black] (0:\r)--(90:\r);
		\draw[edge][color=black] (90:\r)--(-90:\r);		

		\node[edgelabel] at (-45:\r){\color{black}{$3$}};
		\node[edgelabel] at (45:\r) {\color{black}{$3$}};
		\node[edgelabel] at (180:\r /4) {\color{black}{$0$}};
		
		\node[vertex] at (-90:\r) {$\scriptstyle{\color{red}X}$};
		\node[vertex]at (0:\r) {$\scriptstyle{\color{black} ?}$};
		\node[vertex]at (90:\r) {$\scriptstyle{\color{red}X}$};

		\node[edgelabel] at (2,1.5) {$ \Z/9\Z$};
		\node[edgelabel] at (-1.7,0) {$H_2:$};

	\end{scope}

\end{tikzpicture}

\end{center}

\end{multicols}

Next we determine a minimum generating set for $H_1$ and $H_2$.  When two vertices $v_1, v_2$ are joined by an edge labeled $0$ then since $v_1 \equiv v_2 \mod 0$ we conclude $v_1 = v_2$.  This means both $H_1$ and $H_2$ are  effectively single edges.  Thus the minimum generating sets for $H_1$ and $H_2$ are 

\begin{center}
   $\mathbb{M}({H_1}) =  \left\{\left(\begin{array}{c}1\\1\\1\end{array}\right), \left(\begin{array}{c}2\\0\\0\end{array}\right)\right\}$ 
\hspace{0.25in}
   $\mathbb{M}(H_2) = \left\{\left(\begin{array}{c}1\\1\\1\end{array}\right), \left(\begin{array}{c}0\\3\\0\end{array}\right)\right\}$ 

\end{center}

To find the minimum generating set for splines over $G$ mod $6$ we use the same strategy as we would for finding elementary divisors.  In particular we find the unique spline mod $36$ that projects to $[1,1,1]^T \mod 4$ and $[1,1,1]^T \mod 9$ and the unique spline mod $6$ that projects to $[2,0,0]^T \mod 4$ and $[0,3,0]^T \mod 9$. We obtain  

\begin{center}
   $\mathbb{M}({G}) =  \left\{\left(\begin{array}{c}1\\1\\1\end{array}\right), \left(\begin{array}{c}18\\12\\0\end{array}\right)\right\}$ 
\end{center}

\end{example}

\begin{example} \label{4cycleremark}  Suppose $p$ and $q$ are distinct primes.  Label the edges of the complete graph $K_4$ as indicated below. Projecting to splines mod $p$ the edges $x_4x_3$, $x_3x_1$, and $x_1x_2$ all become zero so the vertices $x_2, x_1, x_3, x_4$ all agree mod $p$.  Projecting to splines mod $q$ the edges $x_1x_4$, $x_4x_2$, and $x_2x_3$ all become zero so the vertices $x_1,x_4,x_2,x_3$ all agree mod $q$.  Thus all splines on this graph mod $p$ are trivial and similarly mod $q$.  From the isomorphism $\Z/pq\Z \rightarrow \Z/p\Z \oplus \Z/q\Z$ we conclude that all splines mod $pq$ on this edge-labeled graph are trivial splines. 

\begin{center}
\begin{tikzpicture}

	\begin{scope}
		\pgfmathsetmacro{\r}{1}

		\draw[edge][color=black] (-45:\r/.7)--(45:\r/.7);
		\draw[edge][color=black] (225:\r)--(135:\r);
		\draw[edge][color=red] (225:\r)--(-45:\r/.7);
		\draw[edge][color=black] (135:\r)--(-45:\r/.7);		
		\draw[edge][color=red] (135:\r)--(45:\r/.7);
		\draw[edge][color=red] (225:\r/.8)--(45:\r/.6);

		\node[edgelabel] at (90:\r/.9){\color{red}{$p$}};
		\node[edgelabel] at (65:\r/2) {\color{red}{$p$}};
		\node[edgelabel] at (-90:\r) {\color{red}{$p$}};
		\node[edgelabel] at (0:\r/.8){\color{black}{$q$}};
		\node[edgelabel] at (180:\r) {\color{black}{$q$}};
		\node[edgelabel] at (-45:\r/2) {\color{black}{$q$}};

		\node[vertex] at (-45:\r/.6) {$\scriptstyle{x_2}$};
		\node[vertex]at (45:\r/.6) {$\scriptstyle{x_3}$};
		\node[vertex]at (135:\r/.8) {$\scriptstyle{x_4}$};
		\node[vertex]at (225:\r/.8) {$\scriptstyle{x_1}$};

	\end{scope}

\end{tikzpicture}

\end{center}

\end{example}

\subsection{Adding one vertex to the graph}  Suppose that the graph $G^+$ is obtained from the graph $G$ by adding a single vertex and some number of edges. In this section we investigate how the rings $R_{G,\alpha}$ and $R_{{G^{+}},{\alpha^{+}}}$ can differ.  The key tool is the forgetful map from splines on $G^+$ to splines on $G$, which is well-defined by the following result \cite[Proposition 2.8]{GPT}.

\begin{proposition}[Gilbert--Polster--Tymoczko] \label{addingvertexprop}
If $G$ is a subgraph of $G'$ then every spline in $R_{G'}$ restricts to a spline in $R_{G}$ via the forgetful map $\pi: R_{G'} \rightarrow R_{G}$ that omits the vertices in $V(G')-V(G)$ and their incident edges.
\end{proposition}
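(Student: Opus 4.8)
The plan is to verify directly that the coordinate-forgetting map is well-defined, by checking the spline condition one edge at a time. First I would set up $\pi$ as the map $R^{|V(G')|} \to R^{|V(G)|}$ that simply projects onto the coordinates indexed by the vertices of $G$ and discards the coordinates indexed by $V(G')\setminus V(G)$. Being a projection onto a sub-product of a product of rings, this $\pi$ is automatically a ring homomorphism, so the entire content of the proposition is the assertion that $\pi$ carries the subring $R_{G'} \subseteq R^{|V(G')|}$ into the subring $R_{G} \subseteq R^{|V(G)|}$.

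Next, given $\spl{f}{} \in R_{G'}$, I would write $\spl{g}{} = \pi(\spl{f}{})$, so that $\spl{g}{v} = \spl{f}{v}$ for every $v \in V(G)$, and then fix an arbitrary edge $uv \in E(G)$. Since $G$ is a subgraph of $G'$ (in the sense that its edge set and edge-labeling are inherited from $G'$), we have $uv \in E(G')$ and $\alpha_G(uv) = \alpha_{G'}(uv)$. The spline condition for $\spl{f}{}$ on this edge of $G'$ reads $\spl{f}{u} - \spl{f}{v} \in \langle \alpha_{G'}(uv)\rangle$, and because $\spl{g}{u} = \spl{f}{u}$ and $\spl{g}{v} = \spl{f}{v}$ this is precisely the spline condition for $\spl{g}{}$ on the edge $uv$ of $G$. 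As $uv$ was arbitrary, $\spl{g}{} \in R_G$, and restricting $\pi$ to $R_{G'}$ then yields the desired ring homomorphism $\pi : R_{G'} \to R_G$.

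There is no real obstacle here; the only point requiring care is a matter of convention, namely interpreting ``$G$ is a subgraph of $G'$'' so that the edge-labeling of $G$ is the restriction of that of $G'$, after which the argument is just an unwinding of the definition of a spline. It is worth flagging what the proposition does \emph{not} claim: the forgetful map $\pi$ need be neither injective nor surjective in general, and describing its kernel and image in the case where $G'$ is obtained from $G$ by adjoining a single vertex is exactly the subject of the rest of this subsection.
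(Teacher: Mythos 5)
Your argument is correct: the paper itself does not prove this proposition but cites it from Gilbert--Polster--Tymoczko, and your direct verification---that $\pi$ is a coordinate projection (hence a ring homomorphism) and that every edge of $G$ is an edge of $G'$ with the same label, so the condition $\spl{f}{u}-\spl{f}{v}\in\langle\alpha(uv)\rangle$ transfers verbatim to $\pi(\spl{f}{})$---is exactly the intended unwinding of the definition. Your closing remark that $\pi$ need not be surjective is also consistent with the paper, which gives an explicit example of non-surjectivity right after using this proposition.
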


We use  Proposition \ref{addingvertexprop} repeatedly to guarantee that when we add a vertex to $G$ we create no new splines   on $G$ itself. Loosely speaking we will show that each spline on the expanded graph $G^+$ consists of the sum of a spline coming from $G$ and a spline supported exactly on the new vertex. More formally we have a sequence of maps as follows.

\begin{lemma}\label{lemma: adding one vertex}
Let $R$ be a principal ideal ring.  Let $G$ be an edge-labeled graph and let $G^+$ be a graph obtained from $G$ by adding a vertex $v$ plus some edges between $v$ and vertices in $G$.   Let $N$ be the least common multiple of the labels on edges incident to $v$.  Let $M_v$ be the collection of splines in $R_{G^+}$ that are supported exactly on $v$.  Define the map $\pi: R_{G^+} \rightarrow R_G$ by forgetting $v$ and its incident edges.  Then $\ker \pi \cong M_v \cong N R$.
\end{lemma}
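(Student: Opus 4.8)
The plan is to pin down $\ker\pi$ exactly, after which both claimed isomorphisms fall out. First I would note that a spline $\spl{f}{}\in R_{G^+}$ lies in $\ker\pi$ if and only if $\spl{f}{u}=0$ for every vertex $u$ of $G$. For such an $\spl{f}{}$, every spline condition across an edge internal to $G$ is just $0-0\in\langle\alpha(uv)\rangle$, which is automatic; the only surviving conditions come from the edges $uv$ joining $v$ to $G$, and each of these reads $\spl{f}{v}=\spl{f}{v}-\spl{f}{u}\in\langle\alpha(uv)\rangle$. So $\ker\pi$ is precisely the set of tuples in $R^{|V(G^+)|}$ that vanish on $V(G)$ and whose $v$-coordinate lies in the ideal $I:=\bigcap_{uv\in E(G^+)}\langle\alpha(uv)\rangle$ of $R$, where the intersection runs over the edges incident to $v$. (That $\ker\pi$ makes sense at all uses Proposition \ref{addingvertexprop}, which guarantees $\pi$ is a well-defined map of rings of splines.)

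Next I would invoke the hypothesis that $R$ is a principal ideal ring: a finite intersection of ideals is an ideal, hence principal, and a generator of $I$ is by definition a least common multiple $N$ of the labels $\alpha(uv)$ on the edges at $v$, so $I=NR$. This also handles the degenerate cases cleanly: if some incident edge is labeled $0$ then $N=0$ and $I=0$, forcing $\spl{f}{v}=0$; if $v$ has no incident edge then $N$ is a unit and $I=R$. Now define $\psi\colon\ker\pi\to NR$ by $\psi(\spl{f}{})=\spl{f}{v}$, i.e. the restriction to $\ker\pi$ of the $v$-coordinate projection $R^{|V(G^+)|}\to R$; this is an $R$-module homomorphism. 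By the description of $\ker\pi$ it lands in $NR$, and it is surjective since for any $y\in NR$ the tuple that is $0$ on $V(G)$ and $y$ at $v$ satisfies all spline conditions of $G^+$. It is injective because an element of $\ker\pi$ is determined by its $v$-coordinate, all other entries being $0$. Hence $\ker\pi\cong NR$ as $R$-modules.

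Finally I would identify $M_v$ with $\ker\pi$. Any spline in $R_{G^+}$ whose support is contained in $\{v\}$ vanishes on $V(G)$ and so lies in $\ker\pi$; conversely every nonzero element of $\ker\pi$ vanishes on all of $V(G)$ yet is nonzero, so its only nonzero entry is at $v$, i.e. it is supported exactly on $v$. Thus $M_v$ and $\ker\pi$ coincide (the only ambiguity being whether one counts the zero spline), which gives $M_v\cong NR$ as well and completes the argument.

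The computation is essentially bookkeeping, so I do not expect a serious obstacle; the one place demanding a little care is the ideal-theoretic step --- justifying that in a principal ideal ring the intersection of the label ideals at $v$ is generated by their least common multiple $N$ --- together with checking the degenerate labels (a $0$-edge, or $v$ with no incident edges) so that the stated isomorphism $\ker\pi\cong NR$ holds on the nose rather than merely in the generic case.
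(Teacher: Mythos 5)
Your proof is correct and follows essentially the same route as the paper: identify the elements of $\ker\pi$ as those vanishing on $V(G)$, observe that the only surviving spline conditions force the value at $v$ to lie in the ideal generated by the least common multiple $N$ of the incident labels (and that any such value works), and then note $M_v=\ker\pi$ by inspection. Your extra care in justifying, via the principal ideal ring hypothesis, that the intersection of the label ideals at $v$ equals $NR$, and in checking the degenerate cases, only makes explicit what the paper leaves implicit.
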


\begin{proof}
First we show $M_v \cong N R$.  If $\spl{f}{} \in R_{G^+}$ and $\spl{f}{u}=0$ for all $u$ adjacent to $v$ then $\spl{f}{v}$ is a multiple of the label on each edge incident to $v$.  In other words $\spl{f}{v}$ is a multiple of $N$.  Moreover $\spl{f}{v}$ can be any multiple of $N$ because 
\[\spl{f}{v}=rN \textup{  and  } \spl{f}{u}=0 \textup{  for all  } u \neq v\]
satisfies all the edge conditions for each element $r \in R$.

Now we show that $M_v = \ker \pi$.  Indeed if $\spl{f}{} \in M_v$ then by definition of $\pi$ we know that $\pi(\spl{f}{})  = \spl{0}{}$.  Similarly if $\pi(\spl{f}{}) = \spl{0}{}$ then $\spl{f}{u} = 0$ for all vertices $u \neq v$.  That means $\spl{f}{} \in M_v$ and proves the claim.
\end{proof}

\begin{remark}
We typically use this lemma together with the splitting principle, which states that if 
\[0 \rightarrow A \hookrightarrow M \stackrel{\pi}{\rightarrow} B \rightarrow 0\]
is an exact sequence of $R$-modules and $\varphi: B \rightarrow M$ is a map such that $\pi \circ \varphi: B \rightarrow B$ is the identity then $M \cong A \oplus B$ as $R$-modules.  We will generally consider all of our modules (both spline modules $R_G$ and the submodules $M_v$) as $\Z$-modules.
\end{remark}

For some maps $\pi$ the ring of splines $R_{G^+}$ is particularly easy to identify.

\begin{corollary}
 Let $m$ be an integer.  Let $G$ be an edge-labeled graph and let $G^+$ be a graph obtained from $G$ by adding a vertex $v$ plus some edges between $v$ and vertices in $G$.  Suppose two of the edges incident to $v$ are labeled $n_1$ and $n_2$ where $\lcm (n_1,n_2)=m$.  Then $R_{G^+} \cong \textup{Im } \pi$.
\end{corollary}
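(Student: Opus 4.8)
The goal is to show that under the hypothesis $\lcm(n_1,n_2)=m$, the forgetful map $\pi: R_{G^+}\to R_G$ is surjective, so that $R_{G^+}\cong \textup{Im }\pi = R_G$. Since we already know from Proposition \ref{addingvertexprop} that $\pi$ is well-defined, and from Lemma \ref{lemma: adding one vertex} that $\ker\pi\cong M_v\cong NR$ where $N=\lcm$ of all labels on edges incident to $v$, the only thing left to establish is surjectivity. (Strictly, the statement ``$R_{G^+}\cong\textup{Im }\pi$'' follows tautologically if $\pi$ is injective; but the real content is that $\textup{Im }\pi = R_G$, i.e.\ $\pi$ is onto, which is what I will actually prove, and then $R_{G^+}\cong R_G$.)

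**Key steps.** First I would take an arbitrary spline $\spl{f}{}\in R_G$ and try to extend it to a spline $\spl{g}{}\in R_{G^+}$ by assigning a suitable value $\spl{g}{v}$ at the new vertex $v$; setting $\spl{g}{u}=\spl{f}{u}$ for all $u\in V(G)$, the extension is a spline on $G^+$ precisely when $\spl{g}{v}-\spl{f}{u}\in\langle\alpha(uv)\rangle$ for every neighbor $u$ of $v$. Second, I would isolate the two distinguished neighbors $u_1,u_2$ joined to $v$ by edges labeled $n_1,n_2$; the constraint at $u_1$ forces $\spl{g}{v}\equiv\spl{f}{u_1}\pmod{n_1}$ and at $u_2$ forces $\spl{g}{v}\equiv\spl{f}{u_2}\pmod{n_2}$. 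Third, I would argue that this system is solvable in $\Z/m\Z$: by the generalized Chinese Remainder Theorem a solution exists iff $\spl{f}{u_1}\equiv\spl{f}{u_2}\pmod{\gcd(n_1,n_2)}$, and this congruence holds because $\spl{f}{}$ is already a spline on $G$ — indeed $u_1$ and $u_2$ lie in $G$ and either are adjacent or are connected by a path in $G$, so their difference is divisible by every edge label along some connecting path; more robustly, one should note $\spl{f}{u_1}-\spl{f}{u_2}$ need not be controlled by $G$ alone in general, so the cleanest route is: once $\spl{g}{v}$ satisfies the two congruences mod $n_1$ and mod $n_2$ simultaneously, it automatically satisfies the congruence mod $\lcm(n_1,n_2)=m$ relative to a common lift, hence any remaining edge $uv$ with label $\ell$ dividing\,... — here I must be careful. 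Let me restate: the hidden point is that because $\lcm(n_1,n_2)=m$, the value $\spl{g}{v}$, once pinned down mod $n_1$ and mod $n_2$, is pinned down mod $m$, i.e.\ \emph{uniquely determined}; and then the remaining edge conditions at $v$ must be checked to be automatically satisfied. The clean way to see the whole thing is to invoke Lemma \ref{lemma: adding one vertex}: $\pi$ is injective iff $\ker\pi = 0$ iff $N R = 0$ iff $m\mid N$; but $n_1,n_2$ both divide $N$ so $m=\lcm(n_1,n_2)\mid N\mid m$, giving $N=m$ and $\ker\pi = mR = 0$. Thus $\pi$ is injective, so $R_{G^+}\cong\textup{Im }\pi$.

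**Writing it up.** So the argument is short: compute $N$, observe $\lcm(n_1,n_2)\mid N$ and $N\mid m$ (since every label divides $m$, or rather every label on an edge is an element of $\Z/m\Z$ so $N$ as an lcm in $\Z/m\Z$ equals $m$ when it is a multiple of $m$), conclude $N R = mR = 0$, hence $\ker\pi\cong M_v\cong NR = 0$ by Lemma \ref{lemma: adding one vertex}, hence $\pi$ is injective and $R_{G^+}\cong\textup{Im }\pi$.

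\begin{proof}
By Proposition \ref{addingvertexprop} the forgetful map $\pi: R_{G^+}\to R_G$ is well-defined, and by Lemma \ref{lemma: adding one vertex} we have $\ker\pi\cong M_v\cong NR$, where $N$ is the least common multiple of the labels on edges incident to $v$. Since two of those edges are labeled $n_1$ and $n_2$, both $n_1$ and $n_2$ divide $N$, so $\lcm(n_1,n_2)=m$ divides $N$. On the other hand $N$ is a least common multiple computed in $\Z/m\Z$, so in particular $m$ is a multiple of $\gcd$-type relations giving $NR\subseteq mR = 0$; concretely $m\mid N$ forces $NR = mR = \{0\}$ in $R = \Z/m\Z$. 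Hence $\ker\pi\cong NR = 0$, so $\pi$ is injective. Therefore $R_{G^+}\cong\textup{Im }\pi$, as claimed.
\end{proof}

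**Main obstacle.** The only subtlety — and the step to state carefully — is the claim $N=m$: one must make sure that ``$N$ is the lcm of the incident labels'' is interpreted so that $N R = 0$ exactly when every incident label, in particular $n_1$ and $n_2$, has lcm equal to $m$; since $n_1, n_2$ are themselves elements of $\Z/m\Z$ with $\lcm(n_1,n_2)=m$, this is immediate, but it is the one place where the hypothesis is used and where sloppy bookkeeping between $\Z$ and $\Z/m\Z$ could cause trouble.
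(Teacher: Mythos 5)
Your final argument is exactly the paper's: by Lemma \ref{lemma: adding one vertex}, $\ker\pi\cong M_v\cong N R$ where $N$ is the lcm of the labels incident to $v$, and since $n_1,n_2$ divide $N$ the hypothesis $\lcm(n_1,n_2)=m$ forces $N\equiv 0$ in $\Z/m\Z$, so $\ker\pi=0$ and $R_{G^+}\cong\textup{Im }\pi$. The surjectivity discussion in your planning is an unnecessary detour (the statement only requires injectivity), but the proof you actually wrote is correct and matches the paper's one-line argument.
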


\begin{proof}
In this case $\ker \pi$ is isomorphic to $\{ \spl{0}{} \} \subseteq \Z/m\Z$ so the claim follows.
\end{proof}

The map $\pi$ may not be surjective, not even when the base ring is the integers.

\begin{example}
Consider $G$ and $G^{+}$ below.  Handschy, Melnick, and Reinders proved that the splines $[1,1,1]$, $[0,6,6]$, and $[0,0,6]$ form a basis for $R_{G^+}$ as part of a larger result about integer splines on cycles \cite{HMR}.  But the basis for $R_G$ consists of $[1,1]$ and $[0,2]$ \cite[Theorem 4.1]{GPT}.  In particular the spline $[0,2]$ is not in the image of $R_{G^+} \stackrel{\pi_1}{\rightarrow} R_G$. 

\begin{multicols}{2}

\begin{tikzpicture}
	\pgfmathsetmacro{\r}{1.5}
	\pgfmathsetmacro{\ro}{2.75}
	\pgfmathsetmacro{\edge}{36}

	\node (a) at ({90 + \edge*0}:\r) {};
	\node (b) at ({-90 + \edge*0}:\r) {};
	\node(label) at  ({-30}:-2) {};

	\draw[edge] (a)--(b);

	\node[edgelabel] at ({160}:.60) {$2$};
	
	\node[vertex] at (a) {$\color{white}..$};
	\node[vertex] at (b) {$\color{white}..$};
	\node[label] at (label){${\bf{G:}}$};

	\end{tikzpicture}

\columnbreak

\begin{center}

A basis for $R_G$:

 $ \left\{\left(\begin{array}{c}1\\1\\\end{array}\right), \color{red} \left(\begin{array}{c}{2}\\ 0\end{array}\right) \color{black} \right\}$

\end{center}

\end{multicols}

\begin{multicols}{2}

\begin{tikzpicture}
	\pgfmathsetmacro{\r}{1.5}
	\pgfmathsetmacro{\ro}{2.75}
	\pgfmathsetmacro{\edge}{36}

	\node (a) at ({90 + \edge*0}:\r) {};
	\node (b) at ({-90 + \edge*0}:\r) {};
	\node(c) at ({10}:2.){};
	\node(label) at  ({-30}:-2) {};

	\draw[edge] (a)--(b);
	\draw[edge] (c)--(b);
	\draw[edge] (c)--(a);

	\node[edgelabel] at ({160}:.60) {$2$};
	\node[edgelabel] at ({45}:1.75) {$6$};
	\node[edgelabel] at ({-25}:1.75) {$3$};
	
	\node[vertex] at (a) {$\color{white}..$};
	\node[vertex] at (b) {$\color{white}..$};
	\node[vertex] at (c) {$\color{white}..$};
	\node[label] at (label){${\bf{G^{+}:}}$};

	\end{tikzpicture}

\columnbreak

\begin{center}

A basis  for $R_{G^{+}}$:

 $ \left\{\left(\begin{array}{c}1\\1\\1\end{array}\right), \left(\begin{array}{c}{6}\\6\\ 0\end{array}\right),\color{black} \left(\begin{array}{c}{6}\\{0}\\0\end{array}\right) \right\}$

\end{center}

\end{multicols}

\end{example}

\section{Splines mod $m$ can have almost any rank between $1$ and $n$}

The rank of the module of splines on a graph with $n$ vertices is tightly constrained over the integers: the rank must be $n$.  This is no longer true mod $m$ as we have already seen in several examples.  In this section we show in fact the rank of a module of splines on a graph with $n$ vertices mod $m$ can be essentially any integer between $1$ and $n$.  (The only exceptions are when $m$ has few prime factors and $n$ is very small.)  Our proof has three main steps: we bound the rank of the module of splines by the number of vertices; we then show that any integer between $2$ and $n$ can be achieved; and we finish by showing when we can construct graphs with arbitrarily large number of vertices whose ring of splines mod $m$ are trivial.

We start by proving that the maximum rank of a module of splines mod $m$ is the number of vertices in the graph.  We prove the result for the base ring $\Z/m\Z$ by using the quotient map $\Z \rightarrow \Z/m\Z$.  For this reason we include the result for the integers, too.

\begin{theorem} \label{theorem: max rank is n}
Suppose that $G$ is a graph with $n$ vertices.  Both over the integers and over the ring $\Z/m\Z$ the maximum rank of a ring of splines $R_{G}$ is $n$.
\end{theorem}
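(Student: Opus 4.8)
The plan is to prove both claims together by reducing the $\Z/m\Z$ case to the integer case. First I would establish the bound over the integers: given a graph $G$ with vertices $v_1, \ldots, v_n$, apply Theorem \ref{flowupsplinestheorem} (or its well-known analogue over a PID) to produce a generating set of flow-up splines $\spl{f^{(1)}}{}, \ldots, \spl{f^{(n)}}{}$ indexed by the vertices, where $\spl{f^{(i)}}{v_k} = 0$ for $k < i$. Over the integers these are $\Z$-linearly independent — if $\sum_i c_i \spl{f^{(i)}}{} = \spl{0}{}$, then evaluating at $v_1, v_2, \ldots$ in turn forces $c_1 = 0$, then $c_2 = 0$, and so on, using that $\Z$ has no zero divisors — so they form a basis and the rank is exactly $n$. (That the rank is \emph{at least} $n$ is already cited from \cite{GPT}; the point here is the upper bound, that $n$ generators always suffice.)

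Next I would handle $\Z/m\Z$. By Corollary \ref{corollary: integers subject onto mod $m$}, the quotient map $\rho: \Z \to \Z/m\Z$ induces a surjection $\rho_*$ on spline modules, and the images $\rho_* \spl{b_1}{}, \ldots, \rho_* \spl{b_n}{}$ of an integer basis span $R_{G,\alpha}$ over $\Z/m\Z$ — but here one must be slightly careful, since the integer edge-labeling $\rho^{-1}(\alpha)$ is not literally $\alpha$; rather the labels of $G$ over $\Z$ should be chosen as integer lifts of the labels mod $m$ (together with $m$ itself on each edge, as in Example \ref{new find}), so that $\rho^{-1}(\alpha(uv))$ is generated by $m$ and a lift of $\alpha(uv)$. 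With such a lift, $\rho_*$ is onto, so $R_{G,\alpha}$ is generated by $n$ elements. Then by Proposition \ref{proposition: basis exists} the size of a minimum generating set — i.e. $\rk R_{G,\alpha}$ — is at most $n$. Finally, for the \emph{maximum} being achieved: take $G$ with every edge labeled $0$ (or, after the transformation in Remark \ref{unitsandzeros}, equivalently consider the graph with no edges); then $R_{G,\alpha} = (\Z/m\Z)^n$ has rank exactly $n$, and likewise over $\Z$ one gets $\Z^n$. Hence the maximum is exactly $n$ in both settings.

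The only real subtlety — and what I expect to be the main obstacle to state cleanly — is the compatibility between the integer edge-labeling and the mod-$m$ edge-labeling in the application of Corollary \ref{corollary: integers subject onto mod $m$}: one needs $\rho^{-1}(\langle a \bmod m\rangle) = \langle \tilde a, m\rangle$ in $\Z$, and then to know that an integer basis exists for \emph{that} labeling (which it does, by the PID results \cite{Ros, HagTym} or by the flow-up argument above applied over $\Z$). Everything else is bookkeeping: the upper bound is "$n$ generators suffice, so the minimum generating set has $\le n$ elements," and the lower bound for attainment is the zero-labeled (or edgeless) graph.
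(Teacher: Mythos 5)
Your upper-bound argument is correct, but it takes a genuinely different route from the paper in the integer case: you produce a generating set of at most $n$ flow-up splines directly (one per vertex, running the argument of Theorem \ref{flowupsplinestheorem} over $\Z$, which is legitimate since ideals of $\Z$ are principal), note independence to get rank exactly $n$ over $\Z$, and then push a size-$n$ generating set forward under the surjection of Corollary \ref{corollary: integers subject onto mod $m$}. The paper instead proves the integer bound by induction on the number of vertices, using Lemma \ref{lemma: adding one vertex}, the splitting $R_{G^+}\cong \mathrm{Im}\,\pi\oplus M_v$, and the fact that a submodule of a free module over the PID $\Z$ has rank at most that of the ambient module; it then applies the same surjection argument mod $m$. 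Your route is more self-contained and reuses machinery already in the paper, and your care about the lifted edge-labeling --- that one should take $\rho^{-1}(\alpha(uv))=\langle \tilde a, m\rangle$ and know an integer basis exists for \emph{that} labeling --- addresses a point the paper passes over silently. (One small caveat: for the rank to be exactly $n$ over $\Z$ your flow-up generators must be nonzero, which holds under the paper's standing convention that no edge is labeled $0$.)

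The one genuine error is your witness for attainment of the maximum. An edge labeled $0$ imposes the \emph{strongest} condition, namely $\spl{f}{u}=\spl{f}{v}$; it is unit labels that impose no condition (Remark \ref{unitsandzeros}). So a connected graph with every edge labeled $0$ has only the trivial splines (rank equal to the number of connected components), not $(\Z/m\Z)^n$, and it is not equivalent to the edgeless graph under Remark \ref{unitsandzeros}: zero-labeled edges cause vertices to be merged, while unit-labeled edges are erased. You have swapped the roles of $0$ and units here. Your alternative witness --- the graph with no edges (equivalently, all edges labeled by units), or a connected graph with all edges labeled by a fixed zero divisor $a$ as in Theorem \ref{singleedge} --- does give rank $n$, so the conclusion that the maximum is attained survives; note also that the paper's own proof only establishes the bound $\rk R_G\le n$ and leaves attainment implicit, so this slip is peripheral to the theorem's main content.
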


\begin{proof} 
We prove the case of the integers first using induction on the number of vertices $n$.  The base case is when $n=1$ in which case $R_{G,\alpha}$ has rank $1$ by (trivial) definition.  The inductive hypothesis is that if $G$ has $n$ vertices then $R_G$ has rank at most $n$.  Now suppose that $G^+$ is obtained from $G$ by adding one vertex $v$ and some number of edges and define $\pi$ as in Lemma \ref{lemma: adding one vertex}.  Then $R_{G^+} \cong \textup{Im }\pi \oplus M_v$.  Since $\textup{Im }\pi$ is a submodule of the free module $R_G$ over the  principal ideal domain $\Z$ it is free of rank at most $\textup{rk }R_G$ (see e.g. \cite{McN}).
Lemma \ref{lemma: adding one vertex} showed that $M_v$ has rank at most one.  Thus $R_{G^+}$ has rank at most $n+1$ as desired.

Now let $G$ be a graph with $n$ vertices and let $\alpha$ be an edge-labeling over $R=\Z/m\Z$.  Denote the quotient map by $\rho: \Z \rightarrow \Z/m\Z$ and let $\alpha'$ be the  edge-labeling over the integers with $\rho(\alpha'(uv))= \alpha(uv)$ for each edge $uv$ in $G$.  Denote the integers by $R'=\Z$.  Corollary \ref{corollary: integers subject onto mod $m$} showed that each set of generators for the ring of splines $R_{G,\alpha'}'$ over the integers surjects onto a set of generators for $R_{G,\alpha}$ over the integers mod $m$.  Thus the rank of $R_{G,\alpha}$ is also at most $n$.
\end{proof}

The next result constructs graphs with arbitrarily large number of vertices $n$ and with any rank between $2$ and $n$.  Our proof builds a graph with the desired rank vertex-by-vertex using constant flow-up splines; by controlling the label of one edge incident to the new vertex, we can determine whether the rank of the larger graph increases by one or stays constant.  Note that the theorem assumes the modulus is not a power of a prime; Corollary \ref{corollary: cycles and prime powers} describes the module of splines when $m$ is a power of a prime in the special case of cycles.  

\begin{theorem}
If $m$ has at least two distinct prime factors then for each $n\geq 2$ and each $i$ with $2 \leq i \leq n$ there exists an edge-labeled graph $G$ on $n$ vertices with $\rk R_G =i$.
\end{theorem}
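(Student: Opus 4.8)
The plan is to produce $G$ in two stages: first build a small graph on exactly $i$ vertices whose splines have rank $i$, and then enlarge it to $n$ vertices by repeatedly adding a vertex in a way that leaves the ring of splines — and hence its rank — unchanged. Throughout, fix a factorization $m = p_1^{e_1} p_2^{e_2} \cdots p_k^{e_k}$ with $k \geq 2$.

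For the first stage I would take $G_0$ to be the path $v_1 - v_2 - \cdots - v_i$ with every edge labeled $p_1$. Since $p_1 \mid m$ and $m$ has a second prime factor, $p_1$ is neither a unit nor zero in $\Z/m\Z$ and $p_1\Z/m\Z \cong \Z/(m/p_1)\Z$ is nontrivial. The splines $\spl{f^{(1)}}{} = \spl{1}{}$ and, for $2 \le j \le i$, $\spl{f^{(j)}}{} = (0,\ldots,0,p_1,p_1,\ldots,p_1)$ (with $j-1$ leading zeros) are constant flow-up splines in the sense of Definition \ref{constant flow up spline}, and every spline on $G_0$ is an integer combination of them because along the path the consecutive differences $\spl{f}{v_{k+1}}-\spl{f}{v_k}$ are forced to be multiples of $p_1$. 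The associated constants are $1, p_1, p_1, \ldots, p_1$, which satisfy the divisibility chain $1 \mid p_1 \mid \cdots \mid p_1$, so Lemma \ref{lemma: multiple labels on flow-up classes} gives $R_{G_0} \cong \Z/m\Z \oplus (\Z/(m/p_1)\Z)^{i-1}$ and Corollary \ref{corollary: flow-up with one label} shows these $i$ splines form a minimum generating set; hence $\rk R_{G_0} = i$.

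For the second stage I would repeatedly add a new vertex $w$ joined to $v_1$ by an edge labeled $p_1^{e_1}$ and to $v_2$ by an edge labeled $m/p_1^{e_1}$; both labels are nonzero non-units in $\Z/m\Z$ because $k \ge 2$. For such an addition the least common multiple of the labels incident to $w$ is $\lcm(p_1^{e_1}, m/p_1^{e_1}) = m$, so Lemma \ref{lemma: adding one vertex} gives $\ker \pi \cong m\Z/m\Z = 0$, i.e. the forgetful map $\pi$ is injective. It is also surjective: given a spline $\spl{g}{}$ on the smaller graph, the two edge conditions at $w$ require a value $\spl{g'}{w}$ with $\spl{g'}{w} \equiv \spl{g}{v_1} \pmod{p_1^{e_1}}$ and $\spl{g'}{w} \equiv \spl{g}{v_2} \pmod{m/p_1^{e_1}}$, and since $\gcd(p_1^{e_1}, m/p_1^{e_1}) = 1$ the Chinese Remainder Theorem produces such a value. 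Thus $\pi$ is an isomorphism and the ring of splines is unchanged. Performing this addition $n - i$ times starting from $G_0$ yields a graph $G$ on $n$ vertices with $\rk R_G = i$ and all edge labels nonzero non-units.

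The routine parts are the verification that the exhibited flow-up splines span $R_{G_0}$ and the Chinese Remainder computation in the second stage; neither is delicate. The point that needs the most care is the choice of the padding edges: a naive attempt to keep the rank constant by attaching a new vertex through a single edge labeled $0$ fails, because by Remark \ref{unitsandzeros} such a graph is equivalent to one with fewer vertices, so it would not honestly realize rank $i$ on $n$ vertices. Using two edges whose labels have least common multiple exactly $m$ is what makes the new vertex genuinely present while contributing nothing to the ring of splines, and this is exactly where the hypothesis that $m$ has at least two distinct prime factors enters (for a prime power $m$, no two proper divisors have least common multiple $m$).
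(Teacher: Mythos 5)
Your proposal is correct and follows essentially the same route as the paper: both arguments rest on choosing two coprime factors of $m$, using an all-one-label construction (your path with every edge labeled $p_1$; the paper's inductively built $G_i'$) to reach rank $i$ via constant flow-up splines and Corollary \ref{corollary: flow-up with one label}, and then adding vertices whose incident edge labels have least common multiple $m$ so that Lemma \ref{lemma: adding one vertex} gives $M_v=0$ and the rank is unchanged (the paper's $G_i''$ step with one edge labeled $n_2$ and the rest $n_1$). The only real difference is organizational: the paper runs a single induction that maintains a constant flow-up minimum generating set and verifies surjectivity of the forgetful map by extending those particular generators, whereas you separate an explicit rank-$i$ base graph from a rank-neutral padding step whose surjectivity you obtain for arbitrary splines via the Chinese Remainder Theorem.
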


\begin{proof}
Choose two relatively prime factors $n_1, n_2$ with $n_1n_2=m$.  This is possible because $m$ has at least two distinct prime factors.  We prove the theorem by induction.  Our inductive hypothesis is that for $n \geq 2$ we can construct an edge-labeled graph $G_i$ on $n$-vertices for each $i$ with $2 \leq i \leq n$ such that
\begin{itemize}
\item The module $R_{G_i}$ has a minimum generating set consisting of the trivial spline $[1,1,\ldots,1]$ and $i-1$ other flow-up splines each of whose entries are either $n_1$ or $0$.
\end{itemize} 

The base case is when $n=2$.  In this case the graph with a single edge labeled $\langle n_1 \rangle$ has minimum generating set $[1,1]$ and $[0,n_1]$ satisfying the inductive hypothesis.

Now assume the graph $G_i$ satisfies the inductive hypothesis for $n$.  We will add a vertex and several edges to $G_i$ in two ways, one of which increases the rank of $R_{G_i}$ by exactly one and one of which preserves the rank of $R_{G_i}$.

First create the graph $G_i'$ by adding a vertex $v$ together with at least two edges to vertices in $G_i$.  Label all new edges in $G_i'$ with $\langle n_1 \rangle$.  Next create the graph $G_i''$ by adding a vertex $v$ together with at least two edges to vertices in $G_i$.  Label one of the edges in $G_i''$ with $\langle n_2 \rangle$ and label the rest $\langle n_1 \rangle$.  Denote the edge labeled $\langle n_2 \rangle$ in $G_i''$ by $v'v$.

We show that for both $G_i'$ and $G_i''$ the maps $\pi': R_{G_i'} \rightarrow R_{G_i}$ and $\pi'': R_{G_i''} \rightarrow R_{G_i}$ from Lemma \ref{lemma: adding one vertex} are surjective. Indeed let $\spl{b}{}$ be a spline from the minimum generating set for $R_{G_i}$ given in the inductive hypothesis.  Then we extend $\spl{b}{}$ to a spline 
\begin{itemize}
\item $\spl{b}{}' \in R_{G_i'}$ by defining $\spl{b}{v}'=0$ and 
\item $\spl{b}{}' \in R_{G_i''}$ by defining $\spl{b}{v}' = \spl{b}{v'}.$
\end{itemize}
Whether we set $\spl{b}{v}' = n_1$ or $\spl{b}{v}' = 0$ the result satisfies all edge conditions in $G_i'$ and all edge conditions except perhaps for edge $v'v$ in $G_i''$.   By construction $\spl{b}{v}'-\spl{b}{v'}' = 0$ and so in both cases $\spl{b}{}'$ is a spline.   

Recall that the collection of splines $M_v$ from Lemma \ref{lemma: adding one vertex} consists of all splines that are zero at each vertex of $G_i$.  For $G_i'$ we have the spline $\spl{b}{}^v \in M_v$ defined by $\spl{b}{v}^v = n_1$ and $\spl{b}{u}^v=0$ for all other vertices $u$.  Moreover $\spl{b}{}^v$ generates $M_v \subseteq R_{G_i'}$ by Lemma \ref{lemma: adding one vertex}, proving that $M_v \cong n_1 R$.  For $G_i''$ we have $M_v \cong  n_1n_2 R = \{ 0 \}$ also by Lemma \ref{lemma: adding one vertex}. 

In both cases Lemma \ref{lemma: adding one vertex} together with the splitting principle guarantee a generating set.  For $R_{G_i'}$ we have the generating set 
\[\{\spl{b}{}' : \spl{b}{} \textup{ is in the inductive minimum generating set}\} \cup \{\spl{b}{}^v\}\] 
while for $R_{G_i''}$ the splines $\{\spl{b}{}' : \spl{b}{} \textup{ is in the inductive minimum generating set}\}$ alone span $R_{G_i''}$.  Corollary \ref{corollary: flow-up with one label} guarantees that both generating sets are in fact {\em minimum} generating sets.  By inspection both minimum generating sets satisfy the inductive hypothesis.  So by induction the claim is proven.
\end{proof}

Finally we show that for most moduli and for most $n$ we can construct a graph with only trivial splines.  The proof is similar to the previous theorem.

\begin{theorem}
\label{edge and set size theorem}
If 
\begin{itemize}
\item $m$ has at least three distinct prime factors and $n \geq 3$, or
\item $m$ has at least two distinct prime factors and $n \geq 4$
\end{itemize}
then there exists an edge-labeled graph $G$ on $n$ vertices with $\rk R_G = 1$.
\end{theorem}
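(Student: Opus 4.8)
The plan is to reduce to the case of a prime-power modulus. By Corollary~\ref{corollary: invariant factor decomposition}, $\rk R_{G}$ is the maximum of the ranks of the rings of splines obtained from $G$ by reducing its labels modulo the prime powers exactly dividing $m$, so it is enough to build, for the prescribed $n$, an edge-labeled graph $G$ on $n$ vertices whose splines are trivial modulo \emph{every} prime power $p^{e}$ exactly dividing $m$. The tool I will use repeatedly is the following: if $H\subseteq G$ is a connected spanning subgraph all of whose edge labels are multiples of $p^{e}$, then modulo $p^{e}$ every edge of $H$ carries the label $0$, so any spline takes equal values on the two endpoints of each edge of $H$, and by connectivity and spanning it is constant mod $p^{e}$; hence the splines mod $p^{e}$ have rank $1$. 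Every label I introduce will be a proper divisor of $m$ exceeding $1$, so the conventions of Remark~\ref{unitsandzeros} hold throughout.

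I will use two base cases and an inductive step that adds a vertex. For $n=4$, write $m=m'm''$ with $m',m''$ coprime and both $>1$ (possible because $m$ has at least two prime factors). Generalizing Example~\ref{4cycleremark}, label the Hamilton path $v_{1}v_{2}v_{3}v_{4}$ of $K_{4}$ by $m'$ and the edge-disjoint Hamilton path $v_{3}v_{1}v_{4}v_{2}$ by $m''$. If $p^{e}$ exactly divides $m$, then $p$ divides exactly one of $m',m''$, say $m'$, and then $p^{e}\mid m'$; the first path is a connected spanning subgraph with all labels divisible by $p^{e}$, so splines mod $p^{e}$ are trivial, and likewise when $p\mid m''$. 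Hence $\rk R_{G}=1$. For $n=3$, write $m=p_{1}^{e_{1}}\cdots p_{k}^{e_{k}}$ with $k\ge 3$. On the triangle with edges $e_{1}=v_{2}v_{3}$, $e_{2}=v_{1}v_{3}$, $e_{3}=v_{1}v_{2}$, declare that $p_{i}$ \emph{avoids} $e_{i}$ for $i=1,2,3$ and avoids $e_{1}$ for $i>3$, and label each edge $e_{j}$ by the product of $p_{i}^{e_{i}}$ over all $i$ not avoiding $e_{j}$. Each of the three edges is avoided by at least one prime, so each label is a proper divisor of $m$ greater than $1$; and for each $i$ the two edges not avoided by $p_{i}$ form a path, a connected spanning subgraph whose labels are all divisible by $p_{i}^{e_{i}}$. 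So splines mod $p_{i}^{e_{i}}$ are trivial for every $i$, whence $\rk R_{G}=1$. (For $k=3$ with all $e_{i}=1$ this is Example~\ref{new find}.)

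For the inductive step, suppose $G$ has $n$ vertices and $\rk R_{G}=1$; then $R_{G}$ is generated by the constant spline $\spl{1}{}$, which has additive order $m$, so $R_{G}\cong\Z/m\Z$. Choosing again a coprime factorization $m=m'm''$ with $m',m''>1$, let $G^{+}$ be obtained from $G$ by adding a vertex $v$ joined to $v_{1}$ by an edge labeled $m'$ and to $v_{2}$ by an edge labeled $m''$. In the notation of Lemma~\ref{lemma: adding one vertex}, $N=\lcm(m',m'')=m$, so $M_{v}\cong m\,\Z/m\Z=\{0\}$ and $\ker\pi=\{0\}$; thus the forgetful map $\pi\colon R_{G^{+}}\to R_{G}$ is injective and $R_{G^{+}}\cong\textup{Im}\,\pi$. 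Since $\textup{Im}\,\pi$ is a submodule of $R_{G}\cong\Z/m\Z$ it is cyclic, and it contains $\pi(\spl{1}{})=\spl{1}{}$, a generator of $R_{G}$, so $\textup{Im}\,\pi=R_{G}$ and $\rk R_{G^{+}}=1$. Iterating from the $n=4$ base case gives graphs of every size $n\ge 4$ when $m$ has at least two prime factors, and iterating from the $n=3$ base case gives graphs of every size $n\ge 3$ when $m$ has at least three prime factors, which is exactly the theorem.

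I expect the $n=3$ base case to be the main obstacle. For $n\ge 4$ one can simply partition the edges of $K_{4}$ (or of a larger graph) into two edge-disjoint connected spanning subgraphs, one for each coprime block of $m$; but a triangle has only three edges and so cannot carry two edge-disjoint connected spanning subgraphs, forcing the prime blocks to overlap on edges. Arranging simultaneously that every label remains a proper divisor of $m$ and that every prime still sees a spanning pair of ``zero edges'' is precisely what requires $k\ge 3$ when $n=3$; the avoidance assignment above is the bookkeeping that achieves it.
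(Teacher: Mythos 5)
Your proof is correct and follows essentially the same route as the paper: the same vertex-adding induction using Lemma \ref{lemma: adding one vertex}, with the new vertex attached by two edges labeled by coprime factors whose least common multiple is $m$, anchored at a triangle base case for $n=3$ and a $K_4$ base case for $n=4$. The only real difference is in your favor: where the paper simply cites Example \ref{new find} and Example \ref{4cycleremark} (stated for the special cases $m=pqr$ and $m=pq$), you spell out the generalization of those base cases to arbitrary $m$ and verify them uniformly via Corollary \ref{corollary: invariant factor decomposition} and connected spanning subgraphs whose labels vanish modulo each prime power, rather than quoting integer bases from the literature.
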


\begin{proof}
Induct on the number $n$ of vertices.  The base case is either Example \ref{new find} or Example \ref{4cycleremark}.  Assume the claim holds for a given modulus $m$ and number of vertices $n$.  Let $G$ be the edge-labeled graph given by the inductive hypothesis.

The module of generalized splines $R_G$ always contains the trivial splines, so if $\rk R_G = 1$ then $R_G$ consists solely of the trivial splines.  Choose two relatively prime integers $n_1, n_2$ so that $n_1n_2=m$.  Construct a graph $G'$ from $G$ by adding one new vertex $v$ and at least two new edges between $v$ and vertices in $G$.  Label the edges from $v$ so that at least one is labeled $\langle n_1 \rangle$ and at least one is labeled $\langle n_2 \rangle$.  

The projection $\pi: R_{G'} \rightarrow R_G$ from Lemma \ref{lemma: adding one vertex} is a surjection in this case since every trivial spline on $G$ can be extended to a trivial spline on $G'$.  Lemma \ref{lemma: adding one vertex} and the splitting principle together imply $R_{G'} \cong R_G \oplus M_v$.  By construction $M_v \cong (n_1n_2)\Z/m\Z$ which is just zero.   Thus $R_{G'} \cong R_G$.  We conclude that $R_{G'}$ also consists solely of the trivial splines.  Every ring of splines contains all multiples of the identity spline so $\rk R_{G'} \geq 1$.  We conclude $\rk R_{G'} = 1$ as desired.  
\end{proof}

\begin{remark}
These are sharp results, in the sense that every graph with two vertices has rank at least two, and every graph with three vertices and exactly two distinct prime factors in its modulus has rank at least two. Indeed every graph on two vertices has rank $2$ by inspection or by \cite[Theorem 4.1]{GPT}.  

To see the case of 3-cycles for which $m$ has exactly two distinct prime factors, suppose the edge-labels are $p^{i_1}q^{j_1}$, $p^{i_2}q^{j_2}$, and $p^{i_3}q^{j_3}$.  The least common multiple of $p^{i_1}q^{j_1}$ and  $p^{i_2}q^{j_2}$ is $p^{\max i_1,i_2}q^{\max j_1,j_2}$.  Suppose this equals $m$.  No edge is labeled with $m$ so assume without loss of generality that $i_1<i_2$ and $j_1 > j_2$.  The third edge must have either $i_3<i_2$ or $j_3<j_1$.  Assume without loss of generality that $i_3<i_2$.  Then the least common multiple of  $p^{i_1}q^{j_1}$ and  $p^{i_3}q^{j_3}$ is $d=p^{\max i_1,i_3}q^{\max j_1,j_3}$ which is strictly less than $m$ by construction.  Thus the spline with $d$ at the vertex incident to the first and the third edge and zero elsewhere is a nontrivial spline on the graph.
\end{remark}

\begin{question}
What are the possible ranks for splines mod $p^k$?
\end{question}

\section{Characterizing Rings of Splines in Particular Cases}\label{section: classification}

In this section we present classifications of splines over specific moduli.  In some instances we are able to characterize all splines over arbitrary graphs for a given modulus, while in other cases our current characterizations are restricted to cycles.  Many of the results that follow naturally provide a minimum generating set (for instance Theorem \ref{singleedge}).  In some cases we obtain a generating set that is not necessarily minimum.  Thus   Section {\ref{minimal generating sets}  gives an algorithm to transform the generating sets in Section \ref{generating sets} into minimum generating sets.

\subsection{Generating Sets}
\label{generating sets}

The main goal of this section is to give specific results about splines mod $p^k$ where $p$ is prime.  We culminate by completely classifying splines over cycles mod $m$ for arbitrary integers $m$.  We begin with two simpler cases: when the base ring is $\Z/p\Z$ and when all edges in a graph are labeled  with the same ideal.

\begin{theorem}
\label{modp}
Let $p$ be a prime number.   If $G$ is an edge-labeled graph over $\Z/p\Z$ with no edges labeled zero then every vertex-labeling over $\Z/p\Z$ is a spline on $G$.
\end{theorem}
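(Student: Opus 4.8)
The plan is to exploit the fact that $\Z/p\Z$ is a field, so that every nonzero edge label is a unit and hence generates the whole ring. First I would recall that by hypothesis no edge of $G$ is labeled $0$, so for each edge $uv \in E$ the label $\alpha(uv) = \langle a \rangle$ has $a \neq 0$ in $\Z/p\Z$. Since $\Z/p\Z$ is a field, any such $a$ is invertible, so $\langle a \rangle = \Z/p\Z$; this is exactly the observation already made in Remark \ref{unitsandzeros} that a unit-labeled edge imposes no restriction.

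Next I would check the spline condition directly. Let $\spl{f}{} \in (\Z/p\Z)^{|V|}$ be an arbitrary vertex-labeling. For every edge $uv \in E$ we have $\spl{f}{u} - \spl{f}{v} \in \Z/p\Z = \langle \alpha(uv) \rangle$ trivially, since the containing ideal is the entire ring. Hence $\spl{f}{}$ satisfies the defining condition of a spline on $(G,\alpha)$, so $\spl{f}{} \in R_{G,\alpha}$. As $\spl{f}{}$ was arbitrary, $R_{G,\alpha} = (\Z/p\Z)^{|V|}$, i.e.\ every vertex-labeling is a spline.

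I do not anticipate any genuine obstacle here: the statement is an immediate consequence of the definition together with the field structure of $\Z/p\Z$. The only thing worth remarking on, rather than proving, is the contrast with the general non-domain setting: over $\Z/p\Z$ the spline module is free of full rank $n$ (consistent with the bound from Theorem \ref{theorem: max rank is n}), whereas over $\Z/m\Z$ with $m$ composite the edge labels need not be units and the module can be much smaller. I might close the proof by noting that, combined with Proposition \ref{proposition: basis exists}, this shows $R_{G,\alpha} \cong (\Z/p\Z)^n$ with the standard basis vectors (or equivalently the flow-up splines of Theorem \ref{flowupsplinestheorem}) as a minimum generating set.
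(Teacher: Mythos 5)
Your proof is correct and follows essentially the same route as the paper: since $\Z/p\Z$ is a field, every nonzero edge label generates the whole ring, so the spline condition imposes no restriction and every vertex-labeling is a spline. The closing remarks about $R_{G,\alpha} \cong (\Z/p\Z)^n$ are accurate but not needed for the statement itself.
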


\begin{proof}

Consider an arbitrary graph $G$ in $\Z/p\Z$. Since  $\Z/p\Z$ is a field $\left<1\right>$ is the only  nonzero edge-label.    The ideal $\left<1\right>$ is the entire ring so each pair of adjacent vertices can be labeled arbitrarily in $\Z/p\Z$.  Thus any set of vertex labels will satisfy the spline conditions on $G$. 
\end{proof}

We extend the previous result to a similar result when $G$ has only one edge-label as follows.

\begin{theorem}
\label{singleedge}
 If $G$ is a connected graph such that every edge of $G$ is labeled with $\left<a\right>$  then a minimum generating set for $R_G$ is
 \begin{center}
 
 $\mathbb{B}(R_G) = \left\{\left(\begin{array}{c}1\\1\\1\\.\\.\\.\\1\\1\end{array}\right), \left(\begin{array}{c}{0}\\.\\.\\.\\0\\0\\{a}\\0\end{array}\right),\left(\begin{array}{c}{0}\\.\\.\\.\\0\\a\\{0}\\0\end{array}\right), \left(\begin{array}{c}{0}\\.\\.\\.\\a\\0\\0\\0\end{array}\right), ... , \left(\begin{array}{c}{a}\\.\\.\\.\\0\\0\\{0}\\0\end{array}\right)\right\}$
 
 \end{center}
 
 \end{theorem}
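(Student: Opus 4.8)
The plan is to verify that the displayed set, call it $S$, generates the $\Z$-module $R_G$ and that it has exactly $\rk R_G$ elements, so that it is a minimum generating set by definition. Write $d=\gcd(a,m)$, so that the edge ideal $\langle a\rangle\subseteq\Z/m\Z$ is $\langle d\rangle$, and note that by the standing convention that no edge is labeled $0$ we have $a\not\equiv 0\pmod m$, i.e.\ $d<m$. Let $a\mathbf{e}_j$ denote the vertex-labeling equal to $a$ at $v_j$ and $0$ elsewhere, so that $S=\{\spl{1}{}\}\cup\{a\mathbf{e}_j:1\le j\le n-1\}$, a set of $n$ elements. Note that $a\mathbf{e}_n$ is deliberately excluded: since $a\mathbf{e}_n=a\spl{1}{}-\sum_{j=1}^{n-1}a\mathbf{e}_j$ it is redundant, and omitting it is exactly what makes $|S|=n$, in agreement with Theorem \ref{theorem: max rank is n}.

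First I would observe that each element of $S$ is a spline: adjacent entries of $\spl{1}{}$ agree, and adjacent entries of $a\mathbf{e}_j$ differ by $0$ or $\pm a$, so all edge conditions hold. Next I would show $S$ spans $R_G$. Given $\spl{f}{}\in R_G$, choose an integer $c$ with $c\equiv\spl{f}{v_n}$ and set $\spl{g}{}=\spl{f}{}-c\spl{1}{}$, a spline with $\spl{g}{v_n}=0$. Since $G$ is connected, every vertex $v_k$ is joined to $v_n$ by a path, and summing the edge conditions along that path (the ideal $\langle a\rangle$ being closed under addition) gives $\spl{g}{v_k}=\spl{g}{v_k}-\spl{g}{v_n}\in\langle a\rangle$, say $\spl{g}{v_k}=ab_k$ with $b_k\in\Z$ and $b_n=0$. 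Hence $\spl{g}{}=\sum_{k=1}^{n-1}b_k(a\mathbf{e}_k)$ and $\spl{f}{}=c\spl{1}{}+\sum_{k=1}^{n-1}b_k(a\mathbf{e}_k)$ lies in the $\Z$-span of $S$; here connectedness is precisely the hypothesis in force.

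To conclude that $S$ is \emph{minimum} I would invoke Corollary \ref{corollary: flow-up with one label}, which needs $S$ presented as a family $\spl{p_1}{},\dots,\spl{p_n}{}$ of flow-up generators with $\spl{p_1}{}=\spl{1}{}$, each $\spl{p_i}{}$ ($i\ge2$) a constant flow-up spline for $v_i$ with entries in $\{0,a\}$, and the associated scalars lying in a divisibility chain (here they are $\{1,a,a,\dots,a\}$, so $1\mid a\mid\cdots\mid a$). The one point needing care is the pairing of generators with vertices: in the given vertex order both $\spl{1}{}$ and $a\mathbf{e}_1$ are flow-up for $v_1$ while no element of $S$ is flow-up for $v_n$, so the corollary does not apply verbatim. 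I would fix this by reordering the vertices as $v_1'=v_n$ and $v_i'=v_{i-1}$ for $i\ge2$; then, listing $S$ as $\spl{p_1}{}=\spl{1}{}$ and $\spl{p_i}{}=a\mathbf{e}_{i-1}$ for $i\ge2$, each $\spl{p_i}{}$ with $i\ge2$ is supported only on $v_i'$ and so is a constant flow-up spline for $v_i'$ with entries in $\{0,a\}$, while $\spl{p_1}{}=\spl{1}{}$. Combined with the spanning established above, Corollary \ref{corollary: flow-up with one label} now yields that $S$ is a minimum generating set for $R_G$ — concretely that $R_G\cong\Z/m\Z\oplus(\Z/\tfrac{m}{d}\Z)^{n-1}$, whose $n$ invariant factors are all nontrivial precisely because $d<m$, so $\rk R_G=n=|S|$. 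The only genuinely fiddly step is this last bit of bookkeeping with the vertex order; everything else is routine.
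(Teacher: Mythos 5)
Your proof is correct, but it reaches the generation step by a genuinely different route than the paper. The paper first observes (as you do) that minimality will follow from Corollary \ref{corollary: flow-up with one label}, and then proves generation by induction on the number of vertices: it peels off one vertex $v$, uses the forgetful map $\pi$ of Lemma \ref{lemma: adding one vertex} together with the splitting principle to write $R_{G_n}\cong R_{G_{n-1}}\oplus M_v$ with $M_v$ generated by the single-vertex spline with value $a$, and checks that the displayed set restricts to the inductive generating set. You instead give a direct, non-inductive spanning argument: subtract $c\,\spl{1}{}$ to kill the value at a chosen vertex, then telescope the edge conditions along paths (this is exactly where connectedness enters) to see that every remaining value lies in $\langle a\rangle$, hence the spline is a $\Z$-combination of the single-vertex splines; both proofs then invoke Corollary \ref{corollary: flow-up with one label} for minimality. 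Your argument is shorter and sidesteps the bookkeeping of the paper's vertex-adding machinery (including the need to keep the induced subgraph connected when deleting a vertex), while the paper's version showcases the $\pi$/$M_v$ fibration technique it uses elsewhere. Two harmless presentational points: the displayed set omits the singleton supported at $v_1$ (with the paper's convention that flow-up columns have $v_1$ at the bottom), not the one at $v_n$, but since $a\,\spl{1}{}=\sum_j a\mathbf{e}_j$ the two sets have the same span and your relabeling $v_1'=v_n$, $v_i'=v_{i-1}$ transfers the minimality argument verbatim; and your explicit remark that $a\not\equiv 0$ (so the invariant factors $\Z/\tfrac{m}{d}\Z$ are nontrivial and the rank is exactly $n$) makes precise something the paper leaves to Remark \ref{unitsandzeros}.
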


\begin{proof}
Let $G_n$ denote any connected graph on $n$ vertices for which every edge is labeled $a$.  The theorem describes a set of flow-up classes whose entries are all $a$ or $0$.  If these splines together with the identity spline generate $R_{G,\alpha}$ then they satisfy the hypotheses of Corollary \ref{corollary: flow-up with one label} and thus form a minimum generating set for $R_{G,\alpha}$.  

Thus we prove that the set in the theorem generates $R_{G_n,\alpha}$ for any $G_n$.  Our proof proceeds by induction on the number of vertices $n$.

The base case is clear (see also \cite[Theorem 4.1]{GPT}):
\begin{multicols}{2}

$R_{G_2} =$ 

\begin{center}

\begin{tikzpicture}
	\pgfmathsetmacro{\r}{2.5}
	\pgfmathsetmacro{\ro}{2.75}
	\pgfmathsetmacro{\edge}{36}

	\node (a) at ({-90 + \edge*0}:\r) {};
	\node (b) at ({-90 + \edge*1}:\r) {};
	
	\draw[edge] (a)--(b);

	\node[edgelabel] at ({-72 + \edge*0}:\ro) {$a$};

	\node[vertex] at (a) {$\color{white}..$};
	\node[vertex] at (b) {$\color{white}..$};
		\end{tikzpicture}
	
	\end{center}

\columnbreak

 $\mathbb{B}(R_{G_2}) = \left\{\left(\begin{array}{c}1\\1\end{array}\right), \left(\begin{array}{c}{a}\\{0}\end{array}\right)\right\}$

\end{multicols}

Assume $\mathbb{B}(R_{G_i})$  gives a minimum generating set for $R_{G_i}$ for any $G_i$ with $1 < i \le n-1$ and let $G_n$ be a graph with $n$ vertices.  Choose any $n-1$ of the vertices and let $G_{n-1}$ be the graph induced by those vertices.  Let $v$ denote the $n^{th}$ vertex.  Every edge in $G_n$ and hence $G_{n-1}$ is labeled $a$ so the inductive hypothesis holds for $G_{n-1}$.  Consider $\pi_{n}: R_{G_n} \rightarrow R_{G_{n-1}}$.  

The  difference between any two adjacent vertex-labels in any element of $\mathbb{B}(R_{G_{n}})$ is either $0$ or $a$.  Since every edge in $G_n$ is labeled $a$ by assumption the spline conditions are satisfied in $G_{n}$.  Thus the elements in $\mathbb{B}(R_{G_{n}})$ are all splines on $G_n$.  Notice that the elements of $\mathbb{B}(R_{G_{n-1}})$ agree with the first $n-1$ elements of $\mathbb{B}(R_{G_{n}})$ in the lowest $n-1$ entries.   Thus $\pi_{n}$ is surjective and $M_v$ is generated by $[a,0,0,...,0]^{t}$.  We conclude from Lemma \ref{lemma: adding one vertex} that $R_{G_n} = R_{G_{n-1}} \oplus M_v$ and hence $\mathbb{B}(R_{G_{n}})$ is a minimum generating set for splines on $G_n$.  
\end{proof}

The case of splines over $\Z/p^2\Z$ follows as a corollary.

\begin{corollary}
Let $G$ be a graph and $p$ a prime number.  Then splines on $G$ over $\Z/p^2\Z$ are generated by the minimum generating set $\mathbb{B}(R_G)$ in Theorem \ref{singleedge}.
\end{corollary}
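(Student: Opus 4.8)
The plan is to observe that over $\Z/p^2\Z$ every edge-label is essentially forced to be the single ideal $\langle p\rangle$, so that the statement reduces directly to Theorem~\ref{singleedge}.

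First I would recall the ideal structure of $\Z/p^2\Z$: its only ideals are $\langle 0\rangle$, $\langle p\rangle$, and the whole ring $\langle 1\rangle$. By Remark~\ref{unitsandzeros} we may replace $G$ by a graph $G'$ with $R_G\cong R_{G'}$ in which no edge is labeled with $0$ or a unit; concretely, merge the endpoints of every $0$-labeled edge and delete every unit-labeled edge, using the associated forgetful isomorphism of rings of splines. After this reduction every remaining edge of $G'$ carries one of the nonzero, non-unit ideals of $\Z/p^2\Z$, and there is exactly one such ideal, namely $\langle p\rangle$. Hence $G'$ is a graph all of whose edges are labeled $\langle p\rangle$.

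Next, assuming $G'$ (equivalently $G$) is connected---and if it is not, I would run the argument componentwise---Theorem~\ref{singleedge} applied with $a=p$ produces exactly the minimum generating set $\mathbb{B}(R_{G'})$ described there, consisting of the identity spline together with the flow-up splines whose entries lie in $\{0,p\}$. Transporting this back along the isomorphism $R_G\cong R_{G'}$ of Remark~\ref{unitsandzeros} yields the claimed minimum generating set for $R_G$ over $\Z/p^2\Z$.

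There is essentially no obstacle here: the only points requiring a little care are the bookkeeping in the reduction of Remark~\ref{unitsandzeros}---verifying that merging $0$-edges and deleting unit-edges leaves a graph all of whose edges still have label $\langle p\rangle$, and that the forgetful map carries $\mathbb{B}(R_{G'})$ to a minimum generating set of $R_G$---together with the routine observation that a disconnected graph is handled one component at a time.
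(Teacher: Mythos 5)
Your proposal is correct and follows essentially the same route as the paper: the only nonzero, non-unit ideal of $\Z/p^2\Z$ is $\langle p\rangle$, so after the standing reduction of Remark \ref{unitsandzeros} every edge is labeled $\langle p\rangle$ and Theorem \ref{singleedge} applies directly with $a=p$. Your extra care about the $0$/unit-label reduction and about running the argument componentwise on a disconnected graph only makes explicit what the paper leaves implicit.
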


\begin{proof}
The ideal $\left<p\right>$ generates all zero divisors in $\Z/{p^2}\Z$ which  means there exists only one nonzero, non-unit edge label in $\Z/p^2\Z$.  Hence graphs over $\Z/p^2\Z$ are completely characterized by this theorem.  
\end{proof}




We extend this result partially to cycles whose edges are labeled by powers of $a$.
 
\begin{theorem}
\label{multiplesofa}
Fix a zero divisor $a$ in $\Z/m\Z$.  Suppose all of the edges of $C_n$ are labeled with powers of $a$ so the set of edge labels is $\{a^{k_1}, a^{k_2}, a^{k_3}, ... , a^{k_n}\}$.  Without loss of generality assume that $a^{k_1}$ is the minimal power in the set and that $a^{k_1}$ is the label on edge $\ell_n$. Then the following set generates all splines on $C_n$.

\begin{center}

 $\mathbb{B}=  \left\{\left(\begin{array}{c}1\\1\\1\\.\\.\\.\\1\\1\end{array}\right), \left(\begin{array}{c}{\ell_1}\\\ell_1\\.\\.\\\ell_1\\ \ell_1\\{\ell_1}\\0\end{array}\right),\color{black} \left(\begin{array}{c}{\ell_2}\\\ell_2\\.\\.\\\ell_2\\\ell_2\\{0}\\0\end{array}\right) \color{black}, ... , \left(\begin{array}{c}{\ell_i}\\\ell_i\\.\\.\\\ell_i\\0\\.\\0\end{array}\right), ... , \left(\begin{array}{c}{\ell_{n-2}}\\\ell_{n-2}\\.\\.\\.\\0\\0\\0\end{array}\right),\left(\begin{array}{c}{\ell_{n-1}}\\\\.\\.\\.\\0\\0\\0\end{array}\right)\right\}$

\end{center}
\end{theorem}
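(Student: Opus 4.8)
The plan is to establish two halves: every vector listed in $\mathbb{B}$ is a spline on $C_n$, and these vectors span $R_{C_n}$. Write $L_j$ for the label on edge $\ell_j$, so $\{L_1,\dots,L_n\}=\{a^{k_1},\dots,a^{k_n}\}$ with $L_n=a^{k_1}$ a minimal power of $a$, and label the vertices $v_1,\dots,v_n$ around the cycle so that $\ell_i=v_iv_{i+1}$ with indices mod $n$ (so $\ell_n=v_nv_1$). The single structural fact I will use repeatedly is that powers of a fixed element are totally ordered by ideal containment in $\Z/m\Z$: if $t\le s$ then $\langle a^s\rangle\subseteq\langle a^t\rangle$, so $\langle L_j\rangle\subseteq\langle L_n\rangle$ for every $j$. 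This is precisely where the hypotheses that all labels are powers of one $a$ and that the minimal one sits on $\ell_n$ enter.

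For the first half, fix $j$ with $1\le j\le n-1$ and let $\spl{g^{(j)}}{}\in\mathbb{B}$ be the vector with $n-j$ equal nonzero entries. Deleting the edges $\ell_j$ and $\ell_n$ splits $C_n$ into the arcs $\{v_1,\dots,v_j\}$ and $\{v_{j+1},\dots,v_n\}$, on each of which $\spl{g^{(j)}}{}$ is constant; having $j$ zero entries it vanishes on $\{v_1,\dots,v_j\}$, and having $n-j$ entries equal to its label it equals $L_j$ (the displayed minimal choice) on $\{v_{j+1},\dots,v_n\}$. The only edge conditions not automatically satisfied are across $\ell_j$, namely $L_j\in\langle L_j\rangle$, and across $\ell_n$, namely $L_j\in\langle L_n\rangle$; the first is trivial and the second holds because $\langle L_j\rangle\subseteq\langle L_n\rangle$. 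For the second half I would run the reduction from the proof of Theorem \ref{flowupsplinestheorem} with the order $v_1<\dots<v_n$: given a spline $\spl{f}{}$, subtract $\spl{f}{v_1}\cdot\spl{1}{}$ so that $\spl{f}{v_1}=0$, and then inductively, once $\spl{f}{v_1}=\dots=\spl{f}{v_j}=0$, the edge $\ell_j$ between $v_j$ and $v_{j+1}$ forces $\spl{f}{v_{j+1}}\in\langle L_j\rangle$, say $\spl{f}{v_{j+1}}=tL_j$ with $t\in\Z$; since $\spl{g^{(j)}}{}$ equals $L_j$ at $v_{j+1}$ and vanishes on $v_1,\dots,v_j$, replacing $\spl{f}{}$ by $\spl{f}{}-t\spl{g^{(j)}}{}$ we may also assume $\spl{f}{v_{j+1}}=0$, without touching the earlier vertices. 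After the step $j=n-1$ the spline is $\spl{0}{}$, so the original $\spl{f}{}$ lies in the $\Z$-span of $\mathbb{B}$. (Equivalently, $L_j$ generates the ideal $\mathcal{I}_{j+1}$ of values at $v_{j+1}$ of splines that vanish on $v_1,\dots,v_j$; the inclusion $\langle L_j\rangle\subseteq\mathcal{I}_{j+1}$ is witnessed by the constant extension, which is a spline precisely because $\langle L_j\rangle\subseteq\langle L_n\rangle$ disposes of the closing edge $\ell_n$.)

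The mathematical content is light — essentially a triangular reduction adapted to a cycle — and the one step that demands care is the extra edge $\ell_n$ that a cycle has compared with a path. In both the check that $\spl{g^{(j)}}{}$ is a spline and the identification $\mathcal{I}_{j+1}=\langle L_j\rangle$, one needs that any value already confined to some $\langle a^{\bullet}\rangle$ automatically meets the $\ell_n$-condition, which is exactly the minimality of $L_n$ among powers of $a$ (also the reason the ``without loss of generality'' in the statement is legitimate). I would add a remark that $\mathbb{B}$ need not be a \emph{minimum} generating set — for example if two edges carry the same label, or if $\langle L_j\rangle=\langle L_{j'}\rangle$ for distinct $j,j'$ — which is why the statement claims only that it generates; an algorithm reducing such a generating set to a minimum one is given in Section \ref{minimal generating sets}. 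An alternative route deletes the vertex incident to $\ell_n$ to turn $C_n$ into a path, uses Lemma \ref{lemma: adding one vertex} and the splitting principle to get $R_{C_n}\cong\textup{Im}\,\pi\oplus M_v$, and lifts a flow-up generating set of the path; the minimality of $L_n$ is once more what forces $\pi$ to be surjective.
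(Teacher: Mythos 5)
Your proof is correct and takes essentially the same route as the paper's: you verify that each constant flow-up vector satisfies the only two nontrivial edge conditions (across $\ell_j$ trivially, and across $\ell_n$ because the minimal power $a^{k_1}$ divides every label), and then run the leading-zero triangular reduction, subtracting a multiple of the trivial spline and then of each flow-up generator, exactly as in the paper. One quibble: your closing remark that $\mathbb{B}$ need not be minimum when labels repeat is mistaken under this theorem's hypotheses --- since all nonzero values are powers of a single $a$ they form a divisibility chain, so Corollary \ref{corollary: flow-up with one label} applies and $\mathbb{B}$ is in fact a minimum generating set, which is how the paper ends its proof and what Corollary \ref{corollary: cycles and prime powers} later relies on.
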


\begin{proof}

We need to verify that every element in our generating set is a spline on $C_n$ and that every possible spline on $C_n$ can be written in terms of elements of our generating set.  

The trivial spline is a spline by definition.  Notice that every other element of $\mathbb{B}$ is of the form $(\ell_i, \ell_i, ... , \ell_i, 0, ..., 0)^T$.  The difference between any pair of adjacent vertices is  $0$ around every edge except around the edges $\ell_i$ and $\ell_n$.  The spline conditions are trivially satisfied for each pair of adjacent vertices that differ by $0$.  The difference over the other two edges is $\ell_i$. Notice that $\ell_i$ divides itself and recall our convention that the $n^{th}$ edge is labeled with $a^{k_1}$ which divides all other edge-labels by assumption.  Thus the spline conditions are satisfied at every edge. 

 Suppose $\spl{f}{}$ is an arbitrary spline in $R_{C_n}$. We induct on the number of leading zeros of $\spl{f}{}$ to show that it is generated by $\mathbb{B}$.  If it has no leading zeros then the linear combination $\spl{f}{} - \spl{f}{v_1} \spl{1}{}$ is a spline in $R_{C_n}$ with one leading zero.  If the spline has $i$ leading zeros then $\spl{f}{v_{i+1}}$ must be a multiple of $\ell_i$ since the $i^{th}$ edge is $v_iv_{i+1}$ and $v_{i}$ is labeled zero by hypothesis.  Let $c = \spl{f}{v_{i+1}}/\ell_i$.  It follows that $\spl{f}{}  - \spl{f}{v_{i+1}}  (\ell_i, \ell_i, ... , \ell_i, 0, ..., 0)^T$ is a spline in $R_{C_n}$ with $i+1$ leading zeros.  By induction we conclude every element $\spl{f}{} \in R_{C_n}$ can be written as a linear combination of the splines in $\mathbb{B}$.
 
 Corollary \ref{corollary: flow-up with one label} shows that this set is minimum, proving the claim.  
\end{proof}

The previous result completely characterizes the ring of splines for cycles over $\Z/p^k\Z$ as we see next.

\begin{corollary}\label{corollary: cycles and prime powers}
Let $C_n$ be the cycle on $n$ vertices, let $p$ be a prime number and let $k$ be any positive integer.  Then the splines on $C_n$ over $\Z/p^k\Z$ are generated by the minimum generating set $\mathbb{B}$ in Theorem \ref{multiplesofa}.
\end{corollary}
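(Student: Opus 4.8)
The plan is to observe that Theorem \ref{multiplesofa} already gives a minimum generating set whenever all edge labels of the cycle $C_n$ are powers of a single fixed zero divisor $a$, so the task reduces to showing that the case $R = \Z/p^k\Z$ always falls into that framework. First I would recall that in $\Z/p^k\Z$ every nonzero, non-unit element $x$ has the form $x = p^j u$ for a unique $j$ with $1 \le j \le k-1$ and a unit $u$; in particular the ideal $\langle x \rangle$ equals $\langle p^j \rangle$. By Remark \ref{unitsandzeros} we may assume no edge of $C_n$ is labeled by a unit or by zero (erasing unit edges or merging across zero edges only changes $C_n$ to a smaller cycle, path, or single edge, none of which affects the conclusion once handled inductively), so every edge label generates an ideal of the form $\langle p^{k_i} \rangle$ with $1 \le k_i \le k-1$.

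Next I would set $a = p$, which is a zero divisor in $\Z/p^k\Z$. Then each edge label ideal $\langle p^{k_i} \rangle$ is exactly the ideal generated by the power $a^{k_i}$, so the cycle $C_n$ has all edges labeled by powers of $a$ in the sense required by Theorem \ref{multiplesofa}. Relabeling so that the minimal exponent $k_1 = \min_i k_i$ occurs on edge $\ell_n$ (a harmless reindexing of the vertices/edges of the cycle), Theorem \ref{multiplesofa} applies verbatim and produces the generating set $\mathbb{B}$, which that theorem already shows is a minimum generating set via Corollary \ref{corollary: flow-up with one label}. This is the entire content of the corollary.

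The only subtlety — and the step I would treat most carefully — is the reduction via Remark \ref{unitsandzeros}: one must check that after deleting unit-labeled edges and contracting zero-labeled edges, the resulting graph is still a cycle (or a degenerate case: a path, a single edge, or a single vertex) whose edges are still labeled by powers of $p$, and that the minimum generating set for the reduced graph pulls back to the claimed one for $C_n$. Contracting an edge of a cycle yields a shorter cycle; deleting an edge of a cycle yields a path; and for paths the same flow-up argument (indeed Theorem \ref{singleedge} handles the connected case with appropriate bookkeeping) gives the analogous minimum generating set. Since Theorem \ref{multiplesofa} is stated for $C_n$ with the convention that $a^{k_1}$ divides all other labels, and $p^{k_1} \mid p^{k_i}$ automatically in $\Z/p^k\Z$, no further hypothesis is needed. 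I would therefore simply invoke Theorem \ref{multiplesofa} directly, noting that its hypotheses are met because every zero divisor in $\Z/p^k\Z$ is a power of $p$ up to a unit.
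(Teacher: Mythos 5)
Your proposal is correct and follows essentially the same route as the paper: observe that every nonzero, non-unit ideal of $\Z/p^k\Z$ is of the form $\langle p^j\rangle$, so all edge labels are powers of $a=p$, rotate the cycle so the minimal power sits on $\ell_n$, and invoke Theorem \ref{multiplesofa}. Your extra care about unit- and zero-labeled edges is harmless but unnecessary, since the paper's standing convention (Remark \ref{unitsandzeros}) already excludes those labels.
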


\begin{proof}
The only possible edge labels over $\Z/p^k \Z$ are $\{\left<p\right>, \left<p^2\right>, \left<p^3\right>, ... , \left<p^{k-1}\right>\}$.  By rotating the edge-labeled graph we can assume that the edge $\ell_n$ is labeled with the least power. This rotation induces an isomorphism on the ring of splines. Thus Theorem \ref{multiplesofa} gives a minimum generating set for $R_{{C_n},\alpha}$ over $\Z/p^k\Z$.
\end{proof}

Together these results allow us to completely describe splines for cycles over $\Z/m\Z$.

\begin{theorem} Fix $m$ with prime factorization $m = {p_1}^{k_1} {p_2}^{k_2}...{p_j}^{k^j}$. Let $C_n$ be a cycle with edge-labeling $\alpha: E(C_n) \rightarrow \textup{Ideals in }\Z/m\Z$. For each $i=1,\ldots,j$ let $R^{(i)}$ denote $\Z/p_i^{k_i}\Z$ and let $\alpha^{(i)}$ denote the edge-labeling of $C_n$ that sends each edge $e$ to $\alpha(e) \mod p_i^{k_i}$.  Then 
\[R_{{C_n},\alpha} \cong R^{(1)}_{{C_n},\alpha^{(1)}}  \oplus R^{(2)}_{{C_n},\alpha^{(2)}}  \oplus {R^{(3)}}_{{C_n},\alpha^{(3)}} \oplus ... \oplus {R^{(j)}}_{{C_n},\alpha^{(j)}}\]
where each $R^{(i)}_{{C_n},\alpha^{(i)}}$ has a minimum generating set $\mathbb{B}$ given in Theorem \ref{multiplesofa}.
\label{structuretheorem}
\end{theorem}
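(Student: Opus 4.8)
The plan is to assemble this theorem from two results already in hand: the splitting of a ring of splines along the primary decomposition of $m$ (Theorem \ref{structure theorem}, packaged in Corollary \ref{corollary: invariant factor decomposition}) and the explicit description of splines on a cycle over a prime-power ring (Corollary \ref{corollary: cycles and prime powers}, which rests on Theorem \ref{multiplesofa}). No new ideas are required; the content is in checking that the hypotheses of those results are met.

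First I would produce the direct-sum decomposition. The Chinese Remainder Theorem gives a ring isomorphism $\Z/m\Z \xrightarrow{\ \sim\ } \bigoplus_{i=1}^{j} \Z/p_i^{k_i}\Z$, and because it is a ring isomorphism $\phi$ its inverse carries the edge-labeling $\bigoplus_i \alpha^{(i)}$ back to $\alpha$ (exactly as in the proof of Theorem \ref{structure theorem}, using $\phi^{-1}(\phi(I)) = I$ for ideals $I$ under a ring isomorphism). By Corollary \ref{corollary: injection} and Proposition \ref{surjectionproposition} the induced map on spline rings is both injective and surjective, so
\[R_{C_n,\alpha}\ \cong\ R_{C_n,\bigoplus_i \alpha^{(i)}}\ \cong\ \bigoplus_{i=1}^{j} R^{(i)}_{C_n,\alpha^{(i)}}.\]
Equivalently one applies Theorem \ref{structure theorem} together with a short induction on $j$, exactly as in Corollary \ref{corollary: invariant factor decomposition}.

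Next I would analyze each summand $R^{(i)}_{C_n,\alpha^{(i)}}$ separately. Over $\Z/p_i^{k_i}\Z$ every ideal has the form $\langle p_i^{\ell}\rangle$ with $0 \le \ell \le k_i$, so $\alpha^{(i)}$ labels each edge of $C_n$ by a power of the single zero divisor $a = p_i$ (with the convention that $p_i^{0}$ is a unit and $p_i^{k_i} = 0$). Using Remark \ref{unitsandzeros} to contract zero-labeled edges and delete unit-labeled edges leaves either a shorter cycle, to which Corollary \ref{corollary: cycles and prime powers} applies directly once we rotate so that the minimal power sits on the edge $\ell_n$, or a disjoint union of paths, whose spline modules are built up by iterating Lemma \ref{lemma: adding one vertex} and are again generated by flow-up classes of the shape appearing in $\mathbb{B}$ (in the fully degenerate case every label is a unit and this is just Theorem \ref{modp}). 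In every case Corollary \ref{corollary: flow-up with one label} confirms that the resulting generating set is minimum. Assembling over $i=1,\ldots,j$ completes the proof.

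The only real difficulty here is organizational rather than mathematical. The rotation carrying the smallest $p_i$-power onto the edge $\ell_n$ depends on $i$, since distinct primes may attain their minimal exponents on distinct edges of $C_n$; and reducing $\alpha$ modulo $p_i^{k_i}$ can turn an edge-label that is a genuine zero divisor mod $m$ into a unit or into $0$, so Theorem \ref{multiplesofa} cannot be quoted uniformly across the summands without first passing to the reductions above. Because the decomposition of the first step is an honest direct sum, however, all of this bookkeeping can be carried out independently inside each summand, with no interaction between summands, so the argument goes through cleanly.
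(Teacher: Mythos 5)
Your proposal is correct and follows essentially the same route as the paper: decompose $R_{C_n,\alpha}$ along the primary factorization of $m$ via Theorem \ref{structure theorem}, then apply Theorem \ref{multiplesofa} (through Corollary \ref{corollary: cycles and prime powers}) to each prime-power summand. Your extra attention to edge-labels that become units or zero after reduction mod $p_i^{k_i}$, handled via Remark \ref{unitsandzeros}, is a welcome refinement of a point the paper passes over silently, but it does not change the argument's structure.
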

\begin{proof}
 Theorem \ref{structure theorem} decomposes $R_{{C_n},\alpha}$ as
\[R_{{C_n},\alpha} \cong R^{(1)}_{{C_n},\alpha^{(1)}}  \oplus R^{(2)}_{{C_n},\alpha^{(2)}}  \oplus {R^{(3)}}_{{C_n},\alpha^{(3)}} \oplus ... \oplus {R^{(j)}}_{{C_n},\alpha^{(j)}}.\] 
 Theorem \ref{multiplesofa} applies directly to each ${R^{(i)}}_{{C_n},\alpha^{(i)}}$ since each is the ring of splines of a cycle over $\Z/p_i^{k_i}\Z$.  Hence we can completely describe all splines on  $C_n$ in $\Z/m\Z$ for arbitrary $m$ and $n$.
\end{proof}

The previous strategy and Theorem \ref{structuretheorem} are theoretically powerful.  However they do not always produce the cleanest and most useful results.  The next theorem illustrates this point.  It produces a very elegant minimum generating set in a case where Theorem \ref{structuretheorem} would not directly apply (since $n_1$ and $n_2$ need not be relatively prime), and the minimum generating set it produces is not immediately the same as the one that we would get via Theorem \ref{structuretheorem}.  In fact while Theorem \ref{structuretheorem} could be used to prove the following result when $n_1,n_2$ are coprime, the proof would not be significantly shorter. So in computational problems, ad hoc strategies may still be useful.

\begin{theorem}
\label{pq1}

Let $C_n$ be labeled as in Figure \ref{ncyclelabels}. Fix $m, m_1, m_2$ such that $m_1 \not = m_2$ and $\lcm (m_1,m_2) = m$.  Assume every edge of $C_n$ is labeled with either $m_1$ or $m_2$ and that both $m_1$ and $m_2$ appear as edge labels at least once.  Then the following set $\mathbb{B}$ is a flow-up generating set for $R_{C_n}$:
   
   \begin{center}

 $\mathbb{B} =  \left\{\left(\begin{array}{c}1\\1\\1\\.\\.\\.\\1\\1\end{array}\right), \left(\begin{array}{c}{z_1}\\\ell_1\\.\\.\\\ell_1\\ \ell_1\\{\ell_1}\\0\end{array}\right),\color{black} \left(\begin{array}{c}{z_2}\\\ell_2\\.\\.\\\ell_2\\\ell_2\\{0}\\0\end{array}\right) \color{black}, ... , \left(\begin{array}{c}{z_i}\\\ell_i\\.\\.\\\ell_i\\0\\.\\0\end{array}\right), ... , \left(\begin{array}{c}{z_{n-2}}\\\ell_{n-2}\\.\\.\\.\\0\\0\\0\end{array}\right) \right\}$

   where  $ z_i = 0$  if $\ell_i = m_2  $
 and
$  z_i = \ell_i $ if $ \ell_i = m_1$ 
\end{center}

\end{theorem}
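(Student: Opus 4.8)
The plan is to mirror the proof of Theorem~\ref{multiplesofa}: first show that every element of $\mathbb{B}$ is a spline, and then show by induction on the number of leading zeros that $\mathbb{B}$ spans $R_{C_n}$. The trivial spline $\spl{1}{}$ is a flow-up spline for $v_1$, and for $1\le i\le n-2$ the displayed vector $\spl{p_i}{}$ vanishes on $v_1,\dots,v_i$, hence is a flow-up spline for $v_{i+1}$; so $\mathbb{B}$ is automatically a set of flow-up splines and only the spanning statement needs real work. (We are not claiming $\mathbb{B}$ is minimum, so no invariant-factor analysis is needed.)

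First I would normalize the labeling: since both $m_1$ and $m_2$ occur among the cyclically arranged labels $\ell_1,\dots,\ell_n$, some edge labeled $m_2$ is immediately followed, going around the cycle, by an edge labeled $m_1$, and rotating $C_n$ — which induces an isomorphism of spline rings, exactly as in the proof of Corollary~\ref{corollary: cycles and prime powers} — lets me assume $\ell_{n-1}=m_2$ and $\ell_n=m_1$. Then I would verify $\mathbb{B}\subseteq R_{C_n}$. The trivial spline is a spline by definition. For $\spl{p_i}{}$ the only edges across which its two endpoint values can differ are $\ell_i=v_iv_{i+1}$, $\ell_{n-1}=v_{n-1}v_n$, and $\ell_n=v_nv_1$; across $\ell_i$ the difference is $\ell_i\in\langle\ell_i\rangle$, and for the two wrap-around edges I split on the label of $\ell_i$. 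If $\ell_i=m_1$ then $z_i=m_1$, so the difference across $\ell_{n-1}$ is $\ell_i-z_i=0$ and the difference across $\ell_n=m_1$ is $z_i=m_1\in\langle m_1\rangle$; if $\ell_i=m_2$ then $z_i=0$, so the difference across $\ell_{n-1}=m_2$ is $\ell_i=m_2\in\langle m_2\rangle$ and the difference across $\ell_n$ is $0$. In each case the spline conditions hold.

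For spanning, let $\spl{f}{}\in R_{C_n}$ and induct on its number of leading zeros. If $\spl{f}{v_1}\ne 0$, replace $\spl{f}{}$ by $\spl{f}{}-\spl{f}{v_1}\spl{1}{}$, which vanishes at $v_1$. If $\spl{f}{}$ has exactly $i$ leading zeros with $1\le i\le n-2$, then $\spl{f}{v_i}=0$ together with the edge $\ell_i=v_iv_{i+1}$ forces $\spl{f}{v_{i+1}}\in\langle\ell_i\rangle$, say $\spl{f}{v_{i+1}}=c\ell_i$; since $\spl{p_i}{v_j}=0$ for $j\le i$ and $\spl{p_i}{v_{i+1}}=\ell_i$, the spline $\spl{f}{}-c\spl{p_i}{}$ has strictly more leading zeros. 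Iterating, we reach a spline that vanishes on $v_1,\dots,v_{n-1}$; its value at $v_n$ is then, by the edges $\ell_{n-1}$ and $\ell_n$, a common multiple of $m_1$ and $m_2$ in $\Z/m\Z$, hence a multiple of $\lcm(m_1,m_2)=m$, hence $0$. Thus the original spline has been written as a $\Z$-linear combination of elements of $\mathbb{B}$.

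I expect the main obstacle to be the bookkeeping at the vertex $v_n$: the numbers $z_i$ are precisely the corrections that keep the two wrap-around edges from imposing any condition on $\spl{p_i}{}$, and checking this cleanly is what forces the normalization $\ell_{n-1}=m_2$, $\ell_n=m_1$ — without it the displayed generators need not be splines at all. The leading-zero induction itself is routine, its only delicate point being that the termination step uses the equality $\lcm(m_1,m_2)=m$ exactly, which is also the reason $\mathbb{B}$ has only $n-1$ elements rather than the $n$ one sees over a principal ideal domain.
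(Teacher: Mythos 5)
Your proposal is correct and takes essentially the same route as the paper's proof: after normalizing so that $\ell_{n-1}=m_2$ and $\ell_n=m_1$, you verify the edge conditions for each element of $\mathbb{B}$ (with $z_i$ chosen precisely to handle the two wrap-around edges) and then run the leading-zero induction from Theorem \ref{multiplesofa}, with the final value at $v_n$ forced to vanish because $\lcm(m_1,m_2)=m$. The only difference is cosmetic: you spell out, via a rotation inducing an isomorphism of spline rings, the justification that the paper compresses into its ``without loss of generality'' assumption.
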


\begin{proof}

	 Without loss of generality assume that $\ell_n = m_1$ and $\ell_{n-1} = m_2$.  
We need to verify that every element in $\mathbb{B}$ is a spline on $C_n$ and that every possible spline on $C_n$ can be written in terms of elements in the set.  
 
 To demonstrate the former, note that the trivial spline is a spline on every graph.  In the non-trivial elements of $\mathbb{B}$, notice that the difference between any two adjacent vertices is $0$ over every edge  $\ell_k$ for $0<k<i$ or $i<k<n-1$.  Spline conditions are trivially satisfied in these cases.  The labels for the vertices incident to $\ell_i$ are $(0,\ell_i)$ and thus also satisfy spline conditions.  The edges $\ell_n$ and $\ell_{n-1}$ connect vertices labeled $(0, z_i)$ and $(z_i, \ell_i)$ respectively.  Note that $z_i$ is defined to be exactly that vertex label which satisfies the spline conditions for these edge labels.  Thus every element in $\mathbb{B}$ is a spline on $C_n$

Every spline on $C_n$ can be written in terms of elements in $\mathbb{B}$ by the same inductive argument that appeared in Theorem \ref{multiplesofa}.  The major difference between this case and Theorem \ref{multiplesofa} is that our generating set looses a rank: namely  if $(s, 0,..., 0,0,0)^T$ is a spline on $C_n$ with $n-1$ leading zeros then $s$ must be a multiple of $\ell_{n-1}$ and $\ell_n$.  Since $\lcm(m_1,m_2)=m$ we conclude $s =0$.  This proves the claim.
\end{proof}
  
\subsection{An algorithm to produce Minimum Generating Sets from Theorem \ref{pq1}
}
 \label{minimal generating sets}
 
We can use Theorems \ref{multiplesofa} or \ref{pq1} to find a generating set for $R_{{C_n},\alpha}$ over $\Z/m\Z$.  However these generating sets may not be minimum.  In this section we present an algorithm that turns these generating sets into a minimum generating set.  The algorithm is essentially the same as that which relates prime factorizations to elementary divisors in the structure theorem for finite abelian groups (see, e.g., \cite[Page 164]{DumFoo03}).


\begin{algorithm} Minimum Generating Set Algorithm
   \label{algorithm}

\begin{enumerate}
\item Fix $m$ and a factorization $m=m_1m_2 \cdots$.
\item Let $\mathbb{B}$ be a generating set for $R_{{C_n},\alpha}$ that consists of the identity spline together with constant flow-up splines whose order divides one of $m_1, m_2, \ldots$.
\item  Write $\spl{b'_1}{}, \spl{b'_2}{}, \ldots$ for the splines of order $m_1$, $\spl{b''_1}{}, \spl{b''_2}{}, \ldots$ for the splines of order $m_2$, and so on. 
\item For each $i$ remove as many of $\spl{b'_i}{}, \spl{b''_i}{},\ldots$ as possible from $\mathbb{B}$ and add the sum $\spl{b'_i}{}+\spl{b''_i}{}+\cdots$ to the set $\mathbb{M}$.
\item Any splines left in $\mathbb{B}$ have the same order so put them in $\mathbb{M}$ too. 
\end{enumerate}

\end{algorithm}

\begin{example} 
\label{making generating sets minimal}
 Here is an example of how to use Algorithm \ref{algorithm}.  
 
 \begin{enumerate}
 
\item We use $\Z/21\Z$ and $C_6$  as in Figure 2.

\begin{figure}[h]
\begin{tikzpicture}
	\pgfmathsetmacro{\r}{2.5}
	\pgfmathsetmacro{\ro}{2.75}
	\pgfmathsetmacro{\edge}{36}

	\node (a) at ({-90 + \edge*0}:\r) {};
	\node (b) at ({-90 + \edge*1}:\r) {};
	\node (c) at ({-90 + \edge*2}:\r) {};
	\node (d) at ({-90 + \edge*3}:\r) {};
	\node (e) at ({-90 + \edge*4}:\r) {};
	\node (f) at ({-90 + \edge*5}:\r) {};

	\draw[edge] (a)--(b);
	\draw[edge] (b)--(c);
	\draw[edge] (c)--(d);
	\draw[edge] (d)--(e);
	\draw[edge] (e)--(f);
	\draw[edge] (f)--(a);

	\node[edgelabel] at ({-72 + \edge*0}:\ro) {$3$};
	\node[edgelabel,right] at ({-72 + \edge*1}:\r) {$3$};
	\node[edgelabel] at ({-72 + \edge*2}:\ro) {$7$};
	\node[edgelabel,right] at ({-72 + \edge*3}:\r){$7$};
	\node[edgelabel] at ({-72 + \edge*4}:\ro) {$3$};
	\node[edgelabel] at (180:\ro /5) {$7$};

	\node[vertex] at (a) {$\color{white}..$};
	\node[vertex] at (b) {$\color{white}..$};
	\node[vertex] at (c) {$\color{white}..$};
	\node[vertex] at (d) {$\color{white}..$};
	\node[vertex] at (e) {$\color{white}..$};
	\node[vertex] at (f) {$\color{white}..$};
	
	\end{tikzpicture}
\caption{Every edge is labeled $3$ or $7$ and $3 \cdot 7 \equiv 0 \mod 21$}

\end{figure}
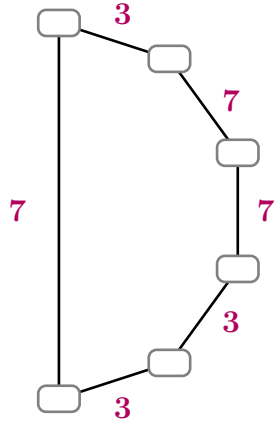

\item  We obtain the nonminimum generating set $\mathbb{B}$.  
\begin{center}
 $\mathbb{B} =  \left\{\left(\begin{array}{c}1\\1\\1\\1\\1\\1\end{array}\right), \left(\begin{array}{c}{0}\\3\\3\\3\\3\\ 0\end{array}\right),\color{black} \left(\begin{array}{c}{0}\\3\\3\\3\\{0}\\0\end{array}\right), \left(\begin{array}{c}{7}\\7\\7\\0\\0\\0\end{array}\right), \left(\begin{array}{c}{7}\\7\\0\\0\\0\\0\end{array}\right) \right\}$

\end{center}
Every element in $\mathbb{B}$ generates an additive subgroup isomorphic to a subgroup of $\Z/21\Z$.  The trivial spline $\spl{1}{}$ is the only element of $\mathbb{B}$ that generates the whole subgroup $\Z/21\Z$.
\\

\item Make as large a matching as possible of splines of additive order $3$ together with splines of additive order $7$:
\\

\begin{center}
  $\left\{\left(\begin{array}{c}{0}\\3\\3\\3\\3\\ 0\end{array}\right),  \left(\begin{array}{c}{7}\\7\\7\\0\\0\\0\end{array}\right)\right\}$,   $ \left\{\left(\begin{array}{c}{0}\\3\\3\\3\\{0}\\0\end{array}\right), \left(\begin{array}{c}{7}\\7\\0\\0\\0\\0\end{array}\right)\right\}$
\end{center}

\item
Sum the pairs from the previous step to form new flow-up splines:

\begin{center}
$\left(\begin{array}{c}{7}\\10\\10\\3\\3\\ 0\end{array}\right)$ , $\left(\begin{array}{c}{7}\\10\\3\\3\\0\\{0}\end{array}\right)$
\end{center}

\item
The minimum generating set $\mathbb{M}$ consists of the two new splines together with any unpaired splines from the original generating set. 

\begin{center}
$\mathbb{M} = \left\{\left(\begin{array}{c}1\\1\\1\\1\\1\\1\end{array}\right), \left(\begin{array}{c}{7}\\10\\10\\3\\3\\ 0\end{array}\right), \left(\begin{array}{c}{7}\\10\\3\\3\\0\\{0}\end{array}\right)\right\}  $ 
\end{center}

\end{enumerate}

\end{example}

\begin{theorem}\label{theorem: algorithm works}
Suppose that the generating set $\mathbb{B}$ for $R_{{C_n},\alpha}$ is obtained either from the pullbacks of the minimum generating sets in Theorem \ref{structuretheorem} or from Theorem \ref{pq1}.  Then the set $\mathbb{M}$ is a minimum generating set for $R_{{C_n},\alpha}$.
\end{theorem}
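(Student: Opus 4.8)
The plan is to show that Algorithm \ref{algorithm} applied to such a generating set $\mathbb{B}$ produces exactly the invariant factor decomposition of $R_{{C_n},\alpha}$, and then invoke Proposition \ref{proposition: basis exists} and Lemma \ref{lemma: multiple labels on flow-up classes}. First I would record the key structural fact: in both the Theorem \ref{structuretheorem} case and the Theorem \ref{pq1} case, the set $\mathbb{B}$ consists of the identity spline $\spl{1}{}$ together with constant flow-up splines, each of whose nonzero entry is of the form $\ell_i$ where $\ell_i$ is one of the edge labels $m_1, m_2, \ldots$ in the fixed factorization $m = m_1 m_2 \cdots$. By Lemma \ref{lemma: multiple labels on flow-up classes}, the module $R_{{C_n},\alpha}$ is then isomorphic to $\bigoplus_b \Z/\tfrac{m}{\gcd(n_b, m)}\Z$ over $b \in \mathbb{B}$, so the additive order of the summand generated by a spline $\spl{b}{}$ with nonzero entry $\ell_i$ is $\tfrac{m}{\gcd(\ell_i, m)}$. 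In the situations at hand each $\ell_i$ is an $m_k$ for a factor $m_k$ in the coprime (or near-coprime) factorization, so this order is precisely the complementary factor $m/m_k$ (in the prime-power case $m = \prod p_t^{k_t}$, the splines coming from the $t$-th summand have order a power of $p_t$); this justifies Step (3) of the algorithm, the sorting of the non-identity splines of $\mathbb{B}$ by additive order.

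Next I would carry out the core argument: Steps (4)–(5) of the algorithm are exactly the procedure converting an elementary-divisor (primary) decomposition into the invariant-factor form. Concretely, suppose for each $i$ the algorithm groups together splines $\spl{b'_i}{}, \spl{b''_i}{}, \ldots$ of pairwise coprime orders $q'_i, q''_i, \ldots$; then by the Chinese Remainder Theorem the cyclic group generated by the sum $\spl{b'_i}{} + \spl{b''_i}{} + \cdots$ has order $q'_i q''_i \cdots$ and in fact $\langle \spl{b'_i}{}\rangle \oplus \langle \spl{b''_i}{}\rangle \oplus \cdots \cong \langle \spl{b'_i}{} + \spl{b''_i}{} + \cdots\rangle$. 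Because the splines are flow-up splines with distinct (increasing) pivot positions, their spans are independent inside $R_{{C_n},\alpha}$, so replacing each group by its sum does not change the module they generate; iterating, $\mathbb{M}$ generates the same module as $\mathbb{B}$, hence all of $R_{{C_n},\alpha}$. Moreover, by construction the orders of the elements of $\mathbb{M}$ form a divisibility chain after reordering — one takes the largest groups first, so later sums have orders dividing earlier ones — and the element $\spl{1}{}$ of order $m$ sits at the top. Thus $R_{{C_n},\alpha} \cong \bigoplus_{c \in \mathbb{M}} \Z/(\mathrm{ord}\,c)\Z$ with the orders forming an invariant-factor chain. By the uniqueness clause of Proposition \ref{proposition: basis exists}, $|\mathbb{M}|$ equals $\rk R_{{C_n},\alpha}$, so $\mathbb{M}$ is a minimum generating set.

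I expect the main obstacle to be the careful bookkeeping in the two cases — verifying that $\mathbb{B}$ really does consist only of constant flow-up splines whose orders lie among a prescribed list of pairwise coprime (prime-power) moduli, so that Step (3) has something to sort and the matching in Step (4) produces coprime-order pairs. In the Theorem \ref{structuretheorem} case this is essentially immediate since the pullbacks of the Theorem \ref{multiplesofa} generators have order a power of the relevant $p_t$; but in the Theorem \ref{pq1} case $m_1$ and $m_2$ need not be coprime, so I would need to argue that after running the algorithm with the factorization $m = m_1 m_2$ one still lands on the invariant-factor form — here the point is that the orders $m/\gcd(m_1,m) = m/m_1 = m_2'$ and $m/m_2 = m_1'$ (with $m = \mathrm{lcm}(m_1,m_2)$) of the two families of splines, together with $m$ itself, already form a divisibility chain up to reordering, so in fact no nontrivial matching happens and $\mathbb{M} = \mathbb{B}$ with $|\mathbb{B}| = \rk R_{{C_n},\alpha}$ by Corollary \ref{corollary: flow-up with one label}. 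I would handle this coprime-vs-lcm subtlety as a short separate lemma or remark before assembling the general proof.
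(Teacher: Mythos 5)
Your core mechanism in the second paragraph (sum generators of pairwise coprime additive orders, check that the resulting orders form a divisibility chain, then count against the uniqueness clause of Proposition \ref{proposition: basis exists}) is sound, but your resolution of the Theorem \ref{pq1} case is wrong, and that is one of the two cases the theorem covers. With $m=\lcm(m_1,m_2)$ and both labels nonzero non-units in $\Z/m\Z$, the two families of non-identity generators in $\mathbb{B}$ have orders $m/m_1=m_2/\gcd(m_1,m_2)$ and $m/m_2=m_1/\gcd(m_1,m_2)$; these are \emph{coprime} and both strictly greater than $1$, so neither divides the other, and together with $m$ they do not form a divisibility chain --- contrary to your claim. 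Consequently the matching in Step (4) is nontrivial whenever both labels occur among $\ell_1,\dots,\ell_{n-2}$, so $\mathbb{M}\neq\mathbb{B}$, and $|\mathbb{B}|$ is in general strictly larger than $\rk R_{C_n,\alpha}$: in the paper's own Example \ref{making generating sets minimal} ($C_6$ over $\Z/21\Z$, which is a Theorem \ref{pq1} situation) one has $|\mathbb{B}|=5$ while the rank is $3$. Your appeal to Corollary \ref{corollary: flow-up with one label} to get $|\mathbb{B}|=\rk R_{C_n,\alpha}$ is exactly where this fails: its hypothesis that the nonzero values reorder into a divisibility chain is violated, since in general $m_1\nmid m_2$ and $m_2\nmid m_1$.

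The repair is to run your own coprime-matching argument in this case rather than trying to bypass it: since $m/m_1$ and $m/m_2$ are coprime, each matched sum has order $(m/m_1)(m/m_2)=m/\gcd(m_1,m_2)$, which divides $m$ and is divisible by the order of every unmatched generator, so the orders of the elements of $\mathbb{M}$ do form a chain and your counting argument then goes through. (This is essentially the paper's route: it splits $R_{C_n,\alpha}\cong R^{(1)}_{C_n,\alpha^{(1)}}\oplus R^{(2)}_{C_n,\alpha^{(2)}}$ via Theorem \ref{structure theorem} applied to the coprime pair $m_1$ and $m_2/\gcd(m_1,m_2)$, and uses Remark \ref{unitsandzeros} and Theorem \ref{singleedge} to see that $\mathbb{B}$ maps onto a minimum generating set of each summand, so the sums realize the invariant factor decomposition of Corollary \ref{corollary: invariant factor decomposition}.) A secondary gap: your opening application of Lemma \ref{lemma: multiple labels on flow-up classes} to $\mathbb{B}$ is only licensed when the constant flow-up generators have distinct leading vertices, which its proof uses; for the pullback generating sets in the Theorem \ref{structuretheorem} case, generators coming from different prime-power summands can share a leading vertex, so the isomorphism $R_{C_n,\alpha}\cong\bigoplus_{b\in\mathbb{B}}\Z/\mathrm{ord}(\spl{b}{})\Z$ should be derived from Theorem \ref{structure theorem} together with Lemma \ref{lemma: multiple labels on flow-up classes} applied inside each summand; likewise $R_{C_n,\alpha}\cong\bigoplus_{c\in\mathbb{M}}\Z/\mathrm{ord}(\spl{c}{})\Z$ cannot be read off from $\mathbb{M}$ itself (its elements are no longer constant flow-up splines) but must come from the $\mathbb{B}$-decomposition plus uniqueness, as you partly indicate.
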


\begin{proof}
In the case of Theorem \ref{structuretheorem} we have a factorization $m=\prod p_i^{k_i}$ into distinct prime powers.  For Theorem \ref{pq1} we have $m_1$ and $m_2$ with $\lcm(m_1,m_2)=m$.  We use the relatively prime pair $m_1$ and $\frac{m_2}{\gcd(m_1,m_2)}$ for which also 
\[m_1 \cdot \frac{m_2}{\gcd(m_1,m_2)} = \lcm(m_1,m_2)=m.\]
In both cases consider the quotient maps $\Z/m\Z \rightarrow \Z/d_i\Z$ for each divisor $d_i|m$ as well as the induced maps on splines $R_{C_n,\alpha} \rightarrow R^{(i)}_{C_n,\alpha^{(i)}}$.  By Theorem \ref{structure theorem} we know that 
\[R_{C_n,\alpha} \cong R^{(1)}_{C_n,\alpha^{(1)}} \oplus R^{(2)}_{C_n,\alpha^{(2)}} \oplus \cdots \]
In the case of Theorem \ref{pq1} the function $\alpha^{(i)}$ labels $j$ edges with $0$ and the other $n-j$ edges with $m_i$.  By Remark \ref{unitsandzeros} and Theorem \ref{singleedge} we conclude that $R^{(i)}_{C_n,\alpha^{(i)}}$ has rank $n-j$.  This is also the size of the image of $\mathbb{B}$ in $R^{(i)}_{C_n,\alpha^{(i)}}$ so $\mathbb{B}$ consists of the preimage of a minimum generating set for $R^{(1)}_{C_n,\alpha^{(1)}}$ and $R^{(2)}_{C_n,\alpha^{(2)}}$.  (This is true by hypothesis for Theorem \ref{structure theorem}.)

 If the order of an element $\spl{b}{} \in R_{C_n,\alpha}$ is $d$ then the order of its image in $R^{(i)}_{C_n,\alpha^{(i)}}$ divides $d$ for each $i$. In the case of Theorem \ref{structuretheorem}  the order of each generator divides $p_i^{k_i}$ for some $i$ so the image of $\spl{b}{} \in \mathbb{B}$ is zero in all direct summands except $R^{(i)}_{C_n,\alpha^{(i)}}$.  In the case of Theorem \ref{pq1} we constructed the quotient maps so that if $\spl{b}{}  \in \mathbb{B}$ has order $m_i$ then its image in $R^{(i)}_{C_n,\alpha^{(i)}}$ is zero for each $i=1,2$.  As  proven in the previous paragraph, when the image of $\spl{b}{} \in \mathbb{B}$ is nonzero then it is part of a minimum generating set for a unique $R^{(i)}_{C_n,\alpha^{(i)}}$.  Thus the sums in the set $\mathbb{M}$ are precisely the elements identified in the invariant factor decomposition (see Corollary \ref{corollary: invariant factor decomposition}), so they form a minimum generating set for $R_{C_n,\alpha}$ as desired. 
\end{proof}

\section{Questions}
\label{questions section}

Our first collection of questions extends the work in this paper directly.
 
 Answering this next question would allow us to classify splines for larger families of graphs over $\Z/m\Z$.
 
\begin{question}
Can we find a minimum generating set of constant flow-up classes over $\Z/p^k\Z$?  Can we classify splines for other families of graphs over $\Z/p^k\Z$?
\end{question}

Theorem \ref{structure theorem} might be exploited to produce more explicit descriptions of splines over $\Z/m\Z$ if $m$ has few prime factors.

\begin{question}
 Can we classify splines over $\Z/p_1p_2\Z$ for $p_1,p_2$ prime?  Or over $\Z/p_1p_2p_3 \Z$ for $p_1,p_2,p_3$ prime?
\end{question}

The classifications that we have completed, together with the questions above, lead us to wonder about combinatorial statistics of the module of splines.  For instance we have the following.

\begin{question}
Given $G$ and $m$ what is the smallest possible rank of $R_{G,\alpha}$?  What is the distribution of $\rk R_{G,\alpha}$ over all edge-labeling functions $\alpha$?
\end{question}

The structure theorems that we have developed lead to the following kind of classification, as well, in which we classify spline modules themselves rather than identify the possible splines for a given graph or ring.

\begin{question}
Can we classify isomorphism classes of splines given $m, n$?  For instance if $n,m$ are fixed how many distinct rings $R_{G,\alpha}$ arise as spline modules for a graph with $n$ vertices over $\Z/m\Z$?
\end{question}

We generally restrict our attention in this paper to rings of the form $\Z/m\Z$ though the arguments in Theorem \ref{structure theorem} and Corollary \ref{corollary: invariant factor decomposition} could be extended to other rings.   

\begin{question} 
How broadly can Theorem \ref{structure theorem} and Corollary \ref{corollary: invariant factor decomposition} be extended?  Can they be used to develop effective computational tools like Algorithm \ref{algorithm}?
\end{question}

Finally objects similar to splines over quotient rings arise naturally in Braden-MacPherson's construction of intersection homology \cite{BraMac01}.  In that case we have quotients of polynomial rings (and sums of quotients of polynomial rings) rather than integers.  

\begin{question}
 Which of the results in this paper extend to the Braden-MacPherson setting?
\end{question}

\section{Acknowledgements}
\label{sec:ack}
The authors gratefully acknowledge useful conversations with Michael DiPasquale, Elizabeth Drellich, Sarah Hagen, Melanie King, Stephanie Reinders, and Lauren Rose as well as the support of the Smith College Center for Women in Mathematics.

\end{document}